\newcommand{\R}{\mathbb{R}}
\newcommand\extrafootertext[1]{%
  \bgroup
  \renewcommand\thefootnote{\fnsymbol{footnote}}%
  \renewcommand\thempfootnote{\fnsymbol{mpfootnote}}%
  \footnotetext[0]{#1}%
  \egroup
}
\newtheorem{thm}{Theorem}
\newtheorem{lemma}{Lemma}[section]
\newtheorem{corollary}[lemma]{Corollary}
\newtheorem{remark}[lemma]{Remark}
\newtheorem{proposition}[lemma]{Proposition}
\newtheorem{definition}[lemma]{Definition}
\numberwithin{equation}{section}
\title{Prescribing scalar curvatures:\\ loss of minimizability}
\author
{
Martin Mayer
\\
\small{Scuola Superiore Meridionale,
Largo San Marcellino 10, 80138 Napoli, Italia
}
\\
\&
\\
Chaona Zhu
\\
\small{School of Mathematics and Statistics, Ningbo University}
\\
\small{No. 818, Fenghua Road, Ningbo 315211, P.R. China}
}
\begin{document}
\maketitle

\begin{abstract}
Prescribing conformally the scalar curvature on a closed manifold with negative Yamabe invariant as a given function $K$
is possible under  smallness assumptions on $K_{+}=\max\{K,0\}$ and in particular, when $K<0$.
In addition, while solutions are unique in case $K\leq 0$, non uniqueness generally holds,
when $K$ is sign changing and $K_{+}$ sufficiently small and flat around its critical points.
These solutions are found variationally as minimizers.
Here we study, what happens, when the relevant arguments fail to apply,
describing on one hand the loss of minimizability generally,
while on the other we construct a function $K$,
for which saddle point solutions to the conformally prescribed scalar curvature problem still exist.
\end{abstract}	

{\footnotesize
\begin{center}
{\it Key Words:}
conformal geometry, scalar curvature, calculus of variations, \\nonlinear analysis
\\
{\it MSC : }
35A15, 35J60, 53C21
\end{center}
}

\tableofcontents

\section{Introduction}

Following our previous work \cite{Mayer_Zhu_Negative_Yamabe_1},
we continue the investigation of the conformally
prescribed scalar curvature problem
\begin{equation}\label{prescribed_equation_1}
R_{g_{u}}=K \in C^{\infty}(M) \; \text{ for } \; g_{u}=u^{\frac{4}{n-2}}g_{0}
\end{equation}
on a closed Riemannian manifold $M=(M^{n},g_{0})$ of dimension $n\geq 3$
and negative conformal Yamabe invariant
$$
Y(M)
=
\inf_{\substack{u\in H^{1}(M)\\ u>0}}
\frac
{\int_ML_{g_0}uud\mu_{g_0}}
{(\int_Mu^{\frac{2n}{n-2}}d\mu_{g_0})^{\frac{n-2}{n}}}
<0.
$$
As is well known, given functions $0<v,w\in C^{\infty}(M)$ and the conformal metric
\begin{equation}\label{conf_metric}
  g_{w}=w^{\frac{4}{n-2}}g_0
\end{equation}
with induced scalar curvature $R_w=R_{g_{w}}$, the conformal Laplacian
$$
L_{g_{w}}=-c_{n}\Delta_{g_{w}}+R_{g_{w}}, \; c_n=\frac{4(n-1)}{n-2}
$$
satisfies the conformal covariance property
$$
L_{g_{w}}v=w^{-\frac{n+2}{n-2}}L_{g_{0}}(wv),
$$
whence, setting $v=1$ and $w=u$, \eqref{prescribed_equation_1} is equivalent finding a positive solution $u>0$ for the
critical equation
\begin{equation}\label{prescribed_equation_2}
L_{g_{0}}u
=
Ku^{\frac{n+2}{n-2}},\; u>0.
\end{equation}
Equation \eqref{prescribed_equation_2} is always solvable,
if the function $K$ on $M$ to be prescribed is strictly negative
\cite{Kazdan_Warner_JDE}, and so we may assume
\begin{equation}\label{Yamabe_metric}
  R_{g_0}=-1\; \text{ and }\; L_{g_0}=-c_n\Delta_{g_0}-1.
\end{equation}
Moreover, sufficient and necessary conditions are known
\cite{Ouyang, Vazquez_Veron}, if $K\leq 0$.
On the other hand $\min K<0$ is necessary for solvability
\cite{Kazdan_Warner_JDE},
and so we are considering here, as in \cite{Mayer_Zhu_Negative_Yamabe_1}, a sign changing, smooth function $K$.
While it is easy to find a plethora non prescribable sign changing functions,
solvability can still be guaranteed, if $K$ is \textit{not too positive},
as shown in \cite{Aubin_Bismuth,Mayer_Zhu_Negative_Yamabe_1,Rauzy_Existence}.
In that case we actually find \cite{Mayer_Zhu_Negative_Yamabe_1} a solution $u_{0}>0$ to \eqref{prescribed_equation_2} as a minimizer of a naturally associated
variational energy $J$, cf.\eqref{J}, with negative mean scalar curvature
$$
\int R_{g_{u_{0}}}d\mu_{g_{u_{0}}}<0,
$$
where $d\mu_{g_{w}}=w^{\frac{2n}{n-2}}d\mu_{g_{0}}$ denotes the induced measure density, see \eqref{conf_metric}.
Complementarily solutions are in case $K\leq 0$  unique
\cite{Aubin_Sur_Le_Problem, Kazdan_Warner_JDE},
while for $K$ sign changing a second solution may exist,
as confirmed in \cite{ Mayer_Zhu_Negative_Yamabe_1,Rauzy_Multiplicity}
under the assumptions, that
$K$ is \textit{not too positive} and \textit{sufficiently flat} around a global maximum point,
in which case a second solution $0<u_{1}\neq u_{0}$ of \eqref{prescribed_equation_2} can be found
\cite{Mayer_Zhu_Negative_Yamabe_1}
as a minimizer of a second functional $I$, inducing a \textit{positive} mean scalar curvature. The generic case without such flatness assumptions will be discussed in
\cite{Mayer_Zhu_Negative_Yamabe_3}.

Here we study, what happens, when the sufficient condition to guarantee minimizability, namely the validity of some A-B-inequality, cf. \eqref{AB} is lost.
In this case we associate to $J$,
defined on a naturally related, contractible variational space $X$,
an exit set $E\neq \emptyset$, onto which portions of augmented sublevels
$$
\{ J\leq L \} \cup E
$$
by deformation always retract.
However, if we assume $\{\partial J=0\}=\emptyset$, i.e. absence of solutions to \eqref{prescribed_equation_2}, then \textit{all} augmented sublevels, which are contractible, retract by
deformation onto E.
As a consequence non solvability necessitates contractibility of $E$,
and yet we construct a function, whose exit set is neither empty nor
contractible, whence we still derive the existence of a saddle point type solution.


\

\noindent
To be precise, as in \cite{Mayer_Zhu_Negative_Yamabe_1} we consider the scaling invariant functional
\begin{equation}\label{J}
J(u)=\frac{-k_u}{(-r_u)^{\frac{n}{n-2}}}>0
\end{equation}
on the variational space
\begin{equation}\label{definition_of_X}
X=\{u>0\}\cap\{r_u<0\}\cap\{k_u<0\}\cap\{\Vert u\Vert _{L_{g_0}^{\frac{2n}{n-2}}}=1\}
\subset C^{\infty}(M)
,
\end{equation}
where $K\in C^{\infty}(M)$ changes sign,
$k_u=k_{g_{u}}=\int Ku^{\frac{2n}{n-2}}d\mu_{g_0}$
and
\begin{equation}\label{r_and_k_definitions}
r_u=r_{g_{u}}=\int R_{g_u}d\mu_{g_u}=\int L_{g_0}uud\mu_{g_0}=
c_{n}\int \vert \nabla u \vert^{2}d\mu_{g_{0}} - \int u^{2}d\mu_{g_{0}},
\end{equation}
with derivative
\begin{equation}\label{partial_J}
\partial J(u)
=
\frac{2^*}{(-r_u)^{\frac{n}{n-2}}}\bigg(\frac{-k_u}{-r_u}\, L_{g_0}u-Ku^{\frac{n+2}{n-2}}\bigg)
,\;
2^*=\frac{2n}{n-2}
\end{equation}
and a Yamabe type flow
\begin{equation}\label{flow_for_J}
\partial_{t}u
=
-(\frac{-k_u}{-r_u}R_u-K)u
=
-u^{-\frac{4}{n-2}}(\frac{-k_u}{-r_u}L_{g_{0}}u-Ku^{\frac{n+2}{n-2}}).
\end{equation}
Hence a critical point of $J$ corresponds to a solution of \eqref{prescribed_equation_2},
and in \cite{Mayer_Zhu_Negative_Yamabe_1} we prove, that
$J$ achieves a global minimum on $X$, if an A-B inequality holds true at least on some sublevel set of $J$,
while the validity of a global A-B-inequality can be guaranteed under suitable assumptions on $K$.
To be precise, let
$$\nu_{1}(L_{g_{0}},D )=\sup_{D \subset \Omega \; \text{smooth } \; }
\nu_{1}(L_{g_{0}},\Omega)$$
denote the first Dirichlet eigenvalue, where for a smooth subset $\Omega \subset M$
\begin{equation*}
\nu_{1}(L_{g_{0}},\Omega)
=
\inf_\mathcal{A}\frac{\int_{\Omega}L_{g_0}uud\mu_{g_0}}{\int_{\Omega}u^2d\mu_{g_0}}
,
\;
\mathcal{A}=\{u\in C_{0}^\infty(\Omega) \; : \; u>0 \; \text{ in } \; \Omega\ \}.
\end{equation*}

\begin{proposition}[\cite{Mayer_Zhu_Negative_Yamabe_1}]\label{prop_A_B_inequality_from_A_B_conditions}
 There exists $\epsilon>0$ such, that
for any $K\in C^{\infty}(M)$, if
$$
\{ K\geq 0 \} = \Omega_{K} \subset \subset \Omega \subset \subset D
$$
with smooth $\Omega,D\subset M$ and
\begin{enumerate}[label=(\roman*)]
\item $\nu_{1}(D)=\nu_{1}(L_{g_{0}},D)>0$
\item
$
\sup_M K
<
\epsilon
[
\; dist^{2\frac{n-1}{n-2}}(\partial \Omega,\partial D)
(\frac{\nu_{1}(D)}{\nu_{1}(D)+1})^{\frac{n}{n-2}}
\;
]
\inf_{M\setminus \Omega} (-K)
$,
\end{enumerate}
then for some constants $A,B>0$ there holds
\begin{equation}\label{AB}
\Vert u \Vert_{H^{1}}^{2}
\leq
Ar_{u}
+
B\vert k_{u} \vert^{\frac{n-2}{n}}
\end{equation}
for all $u\in H^{1}(M)$.
\end{proposition}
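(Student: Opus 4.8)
The plan is to establish \eqref{AB} by decomposing $M$ into the region $M\setminus\Omega$, where $-K$ is bounded below, and the neighbourhood $D$ of $\{K\geq 0\}$, where the Dirichlet eigenvalue $\nu_1(D)>0$ gives coercivity of $L_{g_0}$. The starting point is the identity $r_u=c_n\int|\nabla u|^2\,d\mu_{g_0}-\int u^2\,d\mu_{g_0}$, so that $\|u\|_{H^1}^2$ is controlled once we bound both $\int|\nabla u|^2$ and $\int u^2$. The latter splits as $\int_{M\setminus\Omega}u^2+\int_\Omega u^2$, and the first piece is dominated by $\frac{1}{\inf_{M\setminus\Omega}(-K)}\int_{M\setminus\Omega}(-K)u^{2}\,d\mu_{g_0}$, which one relates to $|k_u|$ via the interpolation $\int u^2\lesssim (\int u^{2^*})^{2/2^*}\lesssim \text{vol}(M)^{\cdots}$ — here the normalisation $\|u\|_{L^{2^*}_{g_0}}=1$ on $X$, or more generally Hölder/Sobolev on all of $H^1$, keeps the exponent $\tfrac{n-2}{n}$ matching the $|k_u|^{\frac{n-2}{n}}$ term.

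The core step is to control $\int_\Omega u^2$ by the gradient energy localized away from $\Omega$. I would fix a cutoff $\eta$ that is $1$ on $\Omega$, supported in $D$, with $|\nabla\eta|\lesssim \mathrm{dist}(\partial\Omega,\partial D)^{-1}$; then $\eta u\in C_0^\infty(D)$ (after smoothing) and the definition of $\nu_1(D)$ gives
$$
\nu_1(D)\int_\Omega u^2\le \nu_1(D)\int_D (\eta u)^2\le \int_D L_{g_0}(\eta u)\,\eta u\,d\mu_{g_0}=c_n\int_D|\nabla(\eta u)|^2-\int_D(\eta u)^2.
$$
Expanding $|\nabla(\eta u)|^2=|\nabla\eta|^2u^2+2\eta u\nabla\eta\cdot\nabla u+\eta^2|\nabla u|^2$ and absorbing cross terms, one obtains an inequality of the shape $(\nu_1(D)+1)\int_\Omega u^2 \le C\big(\int|\nabla u|^2 + \mathrm{dist}^{-2}(\partial\Omega,\partial D)\int_D u^2\big)$. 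Reinserting the bound on $\int_{M\setminus\Omega}u^2$ in terms of $|k_u|$, and using hypothesis (ii) — whose smallness of $\sup_M K$ relative to $\mathrm{dist}^{2\frac{n-1}{n-2}}(\partial\Omega,\partial D)\,(\tfrac{\nu_1(D)}{\nu_1(D)+1})^{\frac{n}{n-2}}\inf_{M\setminus\Omega}(-K)$ is precisely calibrated to make the absorption work — one closes the estimate: the factor $(\tfrac{\nu_1(D)}{\nu_1(D)+1})^{\frac{n}{n-2}}$ tracks the ratio appearing when solving for $\int_\Omega u^2$, and the distance power $2\frac{n-1}{n-2}$ arises from combining the $\mathrm{dist}^{-2}$ from $|\nabla\eta|^2$ with the Sobolev exponent when the $L^2$ norm is traded for the $L^{2^*}$ norm in the $k_u$ term.

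Putting the pieces together, $\int u^2 = \int_{M\setminus\Omega}u^2+\int_\Omega u^2$ is bounded by $A'\,c_n\int|\nabla u|^2 + B'|k_u|^{\frac{n-2}{n}}$ with $A'<1$, after which $\|u\|_{H^1}^2=c_n\int|\nabla u|^2+\int u^2$ and $r_u=c_n\int|\nabla u|^2-\int u^2$ combine linearly: $c_n\int|\nabla u|^2 = r_u+\int u^2 \le r_u + A'c_n\int|\nabla u|^2 + B'|k_u|^{\frac{n-2}{n}}$ gives $c_n\int|\nabla u|^2\le \tfrac{1}{1-A'}(r_u+B'|k_u|^{\frac{n-2}{n}})$, and then $\|u\|_{H^1}^2\le Ar_u+B|k_u|^{\frac{n-2}{n}}$ for suitable $A,B>0$. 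The main obstacle is the bookkeeping of constants in the absorption argument: one must verify that the single universal $\epsilon$ in the statement can be chosen \emph{before} knowing $K,\Omega,D$, i.e. that all the implicit constants (from Sobolev embedding, from the cutoff, from Young's inequality on the cross term) depend only on $(M,g_0)$ and not on the data — this is what forces the precise homogeneous form of condition (ii) and is the step I expect to require the most care.
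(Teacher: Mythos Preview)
The paper does not give its own proof of this proposition: it is quoted from the authors' earlier work \cite{Mayer_Zhu_Negative_Yamabe_1} and stated here without argument, so there is nothing in the present text to compare your proposal against.

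That said, your strategy---cut off to $D$, use $\nu_1(D)>0$ to control $\int_\Omega u^2$ by the Dirichlet form, and control the complement via $\inf_{M\setminus\Omega}(-K)$ and $k_u$---is the natural one and is almost certainly what the cited paper does. Two places in your sketch deserve care. First, when you expand $|\nabla(\eta u)|^2$, the error term $|\nabla\eta|^2 u^2$ is supported in $D\setminus\Omega\subset M\setminus\Omega$, not in all of $D$ as you wrote; this localisation is essential, since otherwise the right-hand side would contain $\int_\Omega u^2$ again and the estimate would be circular. Second, passing from $\int_{M\setminus\Omega}(-K)u^{2^*}$ to $|k_u|$ leaves the residual term $\sup_M K\cdot\int_{\Omega_K}u^{2^*}$, which must itself be bounded via Sobolev applied to $\eta u$ and then absorbed; it is in this step, not only in the cutoff gradient, that hypothesis (ii) is genuinely spent, and tracking both contributions together is what produces the specific exponents $2\tfrac{n-1}{n-2}$ and $\tfrac{n}{n-2}$. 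Your closing concern about the universality of $\epsilon$ is correct and resolves as you anticipate: the only implicit constants are the Sobolev constant of $(M,g_0)$ and a dimensional constant from the cutoff, neither of which depends on $K,\Omega,D$.
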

We call \eqref{AB} an A-B-inequality and global, if valid on $H^{1}(M)$, as ensured by Proposition \ref{prop_A_B_inequality_from_A_B_conditions},
if $K$ is \textit{not too positive} in the sense of (i) and (ii) above.
\begin{thm}[\cite{Mayer_Zhu_Negative_Yamabe_1}]\label{thm_minimize_J}
If an A-B-inequality holds
on some sublevel
$\{ J\leq L \} \neq \emptyset$,
then $J$ admits a global minimizer on $X$,
which is a solution of equation \eqref{prescribed_equation_2}.
\end{thm}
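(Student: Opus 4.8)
The plan is the direct method: let $c=\inf_XJ$, extract a minimizing sequence, use the assumed A-B-inequality to secure compactness, and recognise the limit as a minimizer of the scaling invariant $J$, whose vanishing derivative is — after a constant rescaling — precisely equation \eqref{prescribed_equation_2}. Since $\emptyset\neq\{J\le L\}\subset X$ we have $c\le L$; if $c=L$ the sublevel is already the set of global minimizers, so assume $c<L$ and take $(u_k)\subset\{J\le L\}$ with $J(u_k)\to c$. On $\{J\le L\}\subset X$ one has $r_{u_k}<0$, so \eqref{AB} collapses to $\Vert u_k\Vert_{H^1}^2\le B\vert k_{u_k}\vert^{\frac{n-2}{n}}$. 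Together with $\vert k_{u_k}\vert\le\Vert K\Vert_\infty$ (using $\Vert u_k\Vert_{L^{2^*}}=1$ from the definition of $X$), the Sobolev inequality $\Vert u_k\Vert_{L^{2^*}}^2\le C\Vert u_k\Vert_{H^1}^2$ and the elementary bound $-r_{u_k}\le\Vert u_k\Vert_{L^2}^2\le\Vert u_k\Vert_{H^1}^2$, this bounds $(u_k)$ in $H^1(M)$, confines $-k_{u_k}$ to a compact subinterval of $(0,\infty)$, bounds $-r_{u_k}$ from above, and — now using $J(u_k)\le L$ jointly with $-k_{u_k}$ bounded below — also bounds $-r_{u_k}$ away from $0$; in particular $c>0$. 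Passing to a subsequence, $-k_{u_k}\to\kappa_0>0$ and $-r_{u_k}\to\rho_0>0$ with $c=\kappa_0\rho_0^{-\frac{n}{n-2}}$, while $u_k\rightharpoonup u_0$ in $H^1(M)$, strongly in $L^q(M)$ for $q<2^*$ and a.e., with $u_0\ge0$.

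The heart of the argument is to keep this limit inside $X$; the difficulty is that $J$ is not weakly lower semicontinuous, because $u\mapsto k_u$ can jump upward under weak convergence (bubbling concentration of $L^{2^*}$-mass in $\{K>0\}$), so bubbling must be excluded rather than merely tolerated. Weak lower semicontinuity of the Dirichlet energy and strong $L^2$-convergence first give $r_{u_0}\le\lim r_{u_k}=-\rho_0<0$ and $\Vert u_0\Vert_{L^2}^2\ge\rho_0>0$, so $u_0\not\equiv0$. To obtain the sign of $k_{u_0}$ I would replace $(u_k)$, via Ekeland's variational principle on a complete slice of $X$ containing $\{J\le L\}$ (available since by the previous paragraph $\{J\le L\}$ stays uniformly inside the open conditions defining $X$), by a Palais–Smale sequence, so that in addition $\partial J(u_k)\to0$ in $(H^1(M))^*$; by \eqref{partial_J} and $-r_{u_k}\to\rho_0>0$ this reads $\frac{-k_{u_k}}{-r_{u_k}}L_{g_0}u_k-Ku_k^{\frac{n+2}{n-2}}\to0$ in $(H^1(M))^*$. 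Testing this against the fixed $u_0$ and letting $k\to\infty$ — using $\int L_{g_0}u_ku_0\,d\mu_{g_0}\to r_{u_0}$ by weak $H^1$-convergence and $\int Ku_k^{\frac{n+2}{n-2}}u_0\,d\mu_{g_0}\to k_{u_0}$ by a.e.\ convergence together with the boundedness of $(u_k^{\frac{n+2}{n-2}})$ in $L^{\frac{2n}{n+2}}(M)$ — yields $k_{u_0}=\alpha\,r_{u_0}$ with $\alpha:=\kappa_0/\rho_0>0$, hence $k_{u_0}<0$. Consequently $\tilde u_0:=u_0/\Vert u_0\Vert_{L^{2^*}}$ obeys all constraints defining $X$ except possibly strict positivity, and substituting $k_{u_0}=\alpha r_{u_0}$ gives $J(\tilde u_0)=J(u_0)=\alpha(-r_{u_0})^{-\frac{2}{n-2}}$ while $c=\alpha\rho_0^{-\frac{2}{n-2}}$; since $-r_{u_0}\ge\rho_0$ and $t\mapsto t^{-\frac{2}{n-2}}$ is decreasing, $J(\tilde u_0)\le c$. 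Approximating $\tilde u_0$ by strictly positive renormalized functions (which lie in $X$) yields the reverse inequality $J(\tilde u_0)\ge\inf_XJ=c$, so $J(\tilde u_0)=c$; equality moreover forces $-r_{u_0}=\rho_0$, hence $\Vert\nabla u_0\Vert_{L^2}=\lim\Vert\nabla u_k\Vert_{L^2}$ and $u_k\to u_0$ strongly in $H^1(M)$.

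Finally I would read off the equation and the regularity. Since $\tilde u_0$ (after the above approximation promoting $\tilde u_0\ge0$ to the open conditions of $X$) minimizes the $0$-homogeneous $J$ subject to $\Vert u\Vert_{L^{2^*}}=1$, pairing the Euler–Lagrange relation with $\tilde u_0$ and using $\langle\partial J(\tilde u_0),\tilde u_0\rangle=0$ shows the Lagrange multiplier of the constraint vanishes, so $\partial J(\tilde u_0)=0$, i.e.\ $\frac{-k_{\tilde u_0}}{-r_{\tilde u_0}}L_{g_0}\tilde u_0=K\tilde u_0^{\frac{n+2}{n-2}}$. Standard regularity theory for such critical equations (Brezis–Kato, then elliptic bootstrap) makes $\tilde u_0$ smooth; rewriting the equation, with $L_{g_0}=-c_n\Delta_{g_0}-1$, as $-c_n\Delta_{g_0}\tilde u_0=\tilde u_0\big(1+\tfrac{-r_{\tilde u_0}}{-k_{\tilde u_0}}K\tilde u_0^{\frac{4}{n-2}}\big)$ with bounded, continuous coefficient, the strong maximum principle forces $\tilde u_0>0$, so $\tilde u_0\in X$ realises $\inf_XJ=c$, and the constant rescaling $u_0:=\big(\tfrac{-r_{\tilde u_0}}{-k_{\tilde u_0}}\big)^{\frac{n-2}{4}}\tilde u_0>0$ solves \eqref{prescribed_equation_2}. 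The main obstacle, which absorbs essentially all of the work, is the compactness in the second paragraph: it is the A-B-inequality that simultaneously bounds the minimizing sequence and, through the Palais–Smale testing identity, prevents the limit from collapsing onto the boundary $\{r_u=0\}\cup\{k_u=0\}$ of $X$ or shedding mass into a bubble; the remaining technical overhead lies in the Ekeland set-up on a complete slice and in the approximation by strictly positive functions in $X$.
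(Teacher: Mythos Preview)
This theorem is quoted from \cite{Mayer_Zhu_Negative_Yamabe_1} and is not proved in the present paper, so there is no proof here to match against; what the paper does indicate (opening of Section~\ref{Section_Contractibility_of_the_Exit_Set} and the proof of Corollary~\ref{cor_exit_set_versus_A_B_inequality}) is that the argument in \cite{Mayer_Zhu_Negative_Yamabe_1} runs through the Yamabe-type flow \eqref{flow_for_J}: the A-B-inequality on $\{J\le L\}$ forces $-k_u,-r_u\ge\delta>0$ along every flow line starting there, the flow then exists globally, stays smooth and strictly positive, and produces a Palais--Smale sequence whose blow-up analysis (Proposition~3.1 of \cite{Mayer_Zhu_Negative_Yamabe_1}) yields a positive weak limit solving $\partial J=0$; that this critical point is in fact a global minimizer follows a posteriori.

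Your route is genuinely different: you bypass the flow and argue by the direct method, using the A-B-inequality only to pin $-k$ and $-r$ away from zero on $\{J\le L\}$, then invoking Ekeland to upgrade a minimizing sequence to a Palais--Smale one, and finally recovering $k_{u_0}<0$ from the clever testing identity $k_{u_0}=(\kappa_0/\rho_0)\,r_{u_0}$. The logic is sound and the computations (in particular the comparison $J(u_0)\le c$ via $-r_{u_0}\ge\rho_0$, and the reverse inequality by smooth positive approximation) are correct. What the flow buys in \cite{Mayer_Zhu_Negative_Yamabe_1} is that positivity, smoothness and the constraint $u\in X$ are preserved automatically, so the Palais--Smale sequence lives in $X$ from the outset; your approach trades this for some functional-analytic overhead that you correctly flag but leave a bit implicit: one must run Ekeland on a complete metric space, and since $X\subset C^\infty$ is not complete, this means passing to the strong $H^1$-closure of $\{J\le L\}\cap X$ (legitimate because $J$ is strongly $H^1$-continuous and the A-B-inequality keeps $-k,-r\ge\delta$), and handling the constraint $u\ge0$ there---most cleanly by dropping the sign constraint altogether (since $J(u)=J(|u|)$) and restoring it only at the end via the strong maximum principle, exactly as you do. With that small clarification your argument is complete and gives an alternative, flow-free proof.
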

\noindent
Note, that, if \eqref{AB} holds, then readily
$$E=\{ k=0 \}\cap \{ r<0 \} \cap \{u>0\}\cap \{\|u\|_{L_{g_0}^{\frac{2n}{n-2}}}=1\}= \emptyset.$$
Corollary \ref{cor_exit_set_versus_A_B_inequality} actually tells us, that
an A-B-inequality holding on some sublevel
is equivalent to $E=\emptyset$.
Conversely, as we discuss in Section \ref{Section_Contractibility_of_the_Exit_Set},
if we assume $\{\partial J=0\}=\emptyset$, which for many functions $K$ is the case,
then $X\cup E$ retracts naturally along an energy decreasing flow onto $E$,
whence we refer to $E$ as the exit set, and $E$ inherits the contractibility of $X$.
Also note, that if $\{\partial J=0\}=\emptyset$, then the classical \textit{pure}, i.e. zero weak limit,
blow-up via bubbling does not occur and so the theory of critical point at infinity
\cite{Bahri_Critical_Points_At_Infininty_In_The_Variational_Calculus, MM3}
does not apply, cf. \cite{Mayer_Zhu_Negative_Yamabe_1}.

Thus we are led to ask, which  conditions on a function $K$
still guarantee $\{\partial J=0\}\neq\emptyset$ in absence of an A-B-inequality, i.e. when $E\neq \emptyset$.
To shed some light on this question, 
in Section \ref{sec_non_connectedness} 
we construct a double peak type function $K=K_{dp}$,
precisely defined in \eqref{K_in_the_exit_set_example},
for which $E\neq \emptyset$ and Theorem \ref{thm_minimize_J} is therefore not applicable,
while $E$ is not contractible.
Hence by topological obstruction $\{ \partial J=0 \}=\emptyset $ is impossible
and a solution to the conformally prescribed scalar curvature problem \eqref{prescribed_equation_1} still exists.

\

\textbf{Hypotheses} \;
Throughout this work we will assume, that
\begin{enumerate}[label=(\roman*)]
\item[(H1)] the background metric $g_0$ is smooth with $R_{g_0}=-1$
\item[(H2)] the first Dirichlet eigenvalue is positive,
$
\nu_{1}=\nu_{1}(L_{g_{0}}, \{K\geq 0\} )>0
$

\item[(H3)] the conformal Laplacian is invertible,
$ker(L_{g_{0}})=\{0\}$.
\end{enumerate}

As is well known, we may satisfy (H1) on every closed manifold;
and (H2) is a necessary condition for solvability of \eqref{prescribed_equation_2}, cf. \cite{Rauzy_Existence}.
Moreover, while (H3) is a generic property, cf. \cite{Mayer_Zhu_Negative_Yamabe_1}, it
is probably just a technical assumption.

\smallskip

Assuming (H1)-(H3), for a smooth sign changing function $K$ our main theorem states as follows.

\begin{thm}\label{thm_exit_set_example}
For $J=J_{K}$ and the exit set $E$ of $J$ there holds
\begin{enumerate}[label=(\roman*)]
\item $E=\emptyset$, if and only if $J$ attains its infimum, in particular $\{ \partial J = 0\} \neq \emptyset$.
\item If $\{ \partial J=0 \}=\emptyset$, then $X\cup E$ is contractible and retracts onto $E$ by strong deformation. In particular $E$ is contractible.
\item For the double peak type function $K=K_{dp}$
the exit set $E\neq \emptyset$ contains at least two distinct connected components,
in particular $\{ \partial J =0 \} \neq \emptyset$.
\end{enumerate}
\end{thm}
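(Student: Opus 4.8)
The plan is to split Theorem \ref{thm_exit_set_example} into its three assertions and attack each with a different technique.

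\smallskip

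\textbf{Part (i).} I would prove the two implications separately. If $J$ attains its infimum at some $u_0\in X$, then $u_0$ is a critical point of $J$ on $X$ (since $X$ is open in the unit $L_{g_0}^{2^*}$-sphere intersected with the open conditions $r<0$, $k<0$), hence $\partial J(u_0)=0$ by \eqref{partial_J}, which forces $k_{u_0}<0$ to be an interior minimum. Moreover, solving $\partial J=0$ together with the constraint $\|u\|=1$ means the scaling of $u_0$ solves \eqref{prescribed_equation_2}, and such a solution has $r_{u_0}<0$, $k_{u_0}<0$ strictly, so $u_0$ stays away from $\{k=0\}$; this gives a quantitative A-B-inequality on the relevant sublevel via Corollary \ref{cor_exit_set_versus_A_B_inequality} (referenced in the excerpt), hence $E=\emptyset$. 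Conversely, if $E=\emptyset$ then by that same corollary an A-B-inequality holds on a sublevel of $J$, and Theorem \ref{thm_minimize_J} yields a global minimizer, which in particular is a solution of \eqref{prescribed_equation_2}, so $\{\partial J=0\}\ne\emptyset$.

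\smallskip

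\textbf{Part (ii).} Here I assume $\{\partial J=0\}=\emptyset$. The space $X$ is contractible (it is a convex-like cone condition inside the positive functions — precisely, $X$ deformation retracts onto a single point by scaling toward a fixed $u$ with $r_u<0$, $k_u<0$, using that these conditions are convex in $u^{2^*}$ or at least star-shaped). Along the flow \eqref{flow_for_J}, $J$ is strictly decreasing except at critical points; since there are none in $X$, orbits must exit $X$ through $\partial X$, and the only exit boundary is $E=\{k=0\}$ (the conditions $u>0$ and $r<0$ are preserved along the flow — $u$ stays positive by the maximum principle structure of the flow, and $r_u<0$ is maintained because crossing $r=0$ would require passing through a configuration incompatible with $J>0$ and the monotonicity). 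One then builds an explicit strong deformation retraction of $X\cup E$ onto $E$ by following the normalized flow for infinite time with a reparametrization, using a Palais–Smale / deformation lemma adapted to this non-compact setting as in \cite{Mayer_Zhu_Negative_Yamabe_1}. The contractibility of $E$ follows since it is a deformation retract of the contractible $X\cup E$.

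\smallskip

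\textbf{Part (iii).} This is the heart of the paper and the main obstacle. For $K=K_{dp}$ as defined in \eqref{K_in_the_exit_set_example}, one must show $E\ne\emptyset$ (violation of the A-B-inequality, which by Proposition \ref{prop_A_B_inequality_from_A_B_conditions} one expects from $K$ being "too positive" — two well-separated peaks each individually prescribable on its own region but jointly overwhelming) and that $E$ is disconnected. For the latter I would exploit the double-peak structure: test functions concentrated near the first peak region versus near the second peak region, both lying in $\{k=0\}\cap\{r<0\}$ after suitable rescaling, but separated by a "barrier" — any path in $E$ connecting them would have to pass through configurations where $k=0$ and $r<0$ simultaneously fail, giving at least two components. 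Concretely, one constructs explicit elements $u_1,u_2\in E$ supported essentially on the two positive lobes and shows a continuous invariant (e.g. the sign of $\int K u^{2^*} \phi\, d\mu_{g_0}$ for a cutoff $\phi$ separating the peaks, or the mass distribution) distinguishes their components. Then parts (i) and (ii) combine: $E\ne\emptyset$ rules out minimizability, and $E$ disconnected contradicts the contractibility forced by $\{\partial J=0\}=\emptyset$ in (ii); hence $\{\partial J=0\}\ne\emptyset$, producing a saddle-point solution. The delicate estimate will be verifying quantitatively that the constructed $K_{dp}$ simultaneously satisfies (H2), fails the A-B-inequality, and has the topological separation — balancing the peak heights, widths, and mutual distance against $\nu_1$ and the $L_{g_0}^{2^*}$ normalization.
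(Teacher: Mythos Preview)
Your proposal captures the overall architecture correctly, but there are genuine gaps in each part that the paper handles differently.

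\textbf{Part (i).} Your forward direction is flawed. That a minimizer $u_0$ ``stays away from $\{k=0\}$'' does not by itself yield an A-B-inequality, and in fact Part (iii) produces a function $K_{dp}$ with $\{\partial J=0\}\neq\emptyset$ yet $E\neq\emptyset$; so the existence of a critical point cannot imply $E=\emptyset$. The paper's argument is different and cleaner: if $J$ has a minimizer then $\inf_X J>0$ by definition of $X$, whereas if $E\neq\emptyset$ one runs the flow \eqref{flow_2} backward in time from a point of $E$ to produce elements of $X$ with $J$ arbitrarily small, forcing $\inf_X J=0$.

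\textbf{Part (ii).} The key gap is your reliance on the single flow \eqref{flow_for_J}. Along \eqref{flow_for_J} one does drive $-k_u$ toward $0$, but there is no a priori control preventing $-r_u\to 0$ simultaneously, which would keep $J$ positive without ever reaching $E$; your sentence ``crossing $r=0$ would require a configuration incompatible with $J>0$'' is exactly what fails to be obvious. The paper resolves this by introducing a \emph{second} flow $\partial_t u=(K-\bar k)u$ (Lemma \ref{lem_second_flow_properties}): once $-k_u$ is small it switches to this flow, which is shown to decrease $J$ and $-k$ at a uniform rate while keeping $-r_u$ bounded away from $0$, so that $E$ is hit transversally in finite time. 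The retraction of $X\cup E$ onto $E$ then comes from gluing the two flows (Proposition \ref{prop_exit_set_retraction}), and contractibility of $X$ is obtained via the explicit homotopy $w_\tau=(\tau+(1-\tau)u^{2n/(n-2)})^{(n-2)/(2n)}$ (Lemma \ref{lem_homotopy_on_X}), after also showing that $X$ deformation retracts onto each sublevel (Lemma \ref{lem_sublevel_retracts}).

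\textbf{Part (iii).} Your separation mechanism (a continuous invariant like $\int K u^{2^*}\phi$) is not what the paper does and would be hard to make work, since there is no reason such a quantity should be locally constant on $E$. The paper's mechanism is sharper and more delicate: it computes a precise second-order expansion of $k_u$ on test functions $u=\alpha+\alpha_1\varphi_{a_1,\lambda_1}+v$ near one peak (Proposition \ref{prop_exp}), and with the very specific choice $\bar\alpha=c_1/[(4n(n-1)c_1/|M|^{2/n})^{n/(n-2)}+c_1]$ obtains that $k$, as a function of $(r,a_1,\lambda_1,v)$, is slightly positive at the ``origin'' and strictly negative on a surrounding annulus in these coordinates. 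Continuity then gives a point $u_1\in E$ trapped inside the inner ball, and the $k<0$ annulus forbids any path in $E$ from escaping to the other peak. The quantitative tuning of $\bar\alpha$ is essential and is not something one can replace by a soft topological argument.
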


While $(i)$ and $(ii)$ of Theorem \ref{thm_exit_set_example}, proved in Section \ref{Section_Contractibility_of_the_Exit_Set}, \textit{geometrically} describe the solvability of $\partial J=0$, part $(iii)$ or equivalently Proposition \ref{prop_disconnectedness_of_E_for_K_dp} show, that even when minimizability of $J$ is not feasible, we may still hope to tell from the \textit{shape} of $K$, that a solution exists.

\begin{remark}
\begin{enumerate}[label=(\roman*)]
\item 
In terms of
(variational) space, flow and exit set
the ideas and arguments, leading to Theorem 2, 
are reminiscent to those in \cite{Conley_Book}.
\item We are not aware of previous existence results in the context of
conformal geometry, based on a \emph{computation} of a non trivial exit set,
and thus expect the arguments, developed here, to apply to related problems.
\end{enumerate}
\end{remark}

To put Theorem \ref{thm_exit_set_example} into context, we recall from \cite{Kazdan_Warner_JDE,Mayer_Zhu_Negative_Yamabe_1}, that positivity of the unique solution
to the linear equation
\begin{equation*}
\mathcal{L}_{g_{0}}\bar{w}=-(n-1)\Delta_{g_{0}}\bar{w}+\bar{w}=-K
\end{equation*}
is a necessary condition for solvability of \eqref{prescribed_equation_2}, cf. \eqref{Yamabe_metric}.
Considering thus a sign changing function $w\in C^{\infty}(M)$ and letting
$$
K_{1}=-\mathcal{L}_{g_{0}}w
\; \text{ and  } \;
0>K_{0} \in C^{\infty}(M),
$$
we then can solve \eqref{prescribed_equation_2} for $K=K_{0}$, but not for $K=K_{1}$. Interpolating via
$$
K_{\tau}=(1-\tau)K_{0}+\tau K_{1},\; \tau \in [0,1]
$$
we then find
\begin{enumerate}[label=(\roman*)]
\item 	solvability for $\tau\geq 0$ close to $0$ via minimizing $J_{\tau}=J_{K_{\tau}}$ and $E_{\tau}=\emptyset$
\item non solvability for $\tau \leq 1$ close to $1$ and,
provided (H2) holds for $K_{1}$,
$$
X_{\tau}\cup E_{\tau}=\{ k_{\tau}\leq 0 \}\cap \{ r_{\tau}<0\} \xhookrightarrow{sdr}
E_{\tau}=\{ k_{\tau}=0\} \cap \{ r_{\tau}<0 \} \neq \emptyset
$$
as a strong deformation retract with  $E_{\tau}$ contractible.
\end{enumerate}
Theorem \ref{thm_exit_set_example} now tells us,
that the loss of solvability is not just related
to the existence, but also to the topology of the related exit sets,
whose programmatic study remains elusive.

\section{Preliminaries}

Here we recall some previous results, standard tools and notations.

\smallskip

\noindent {\bf Notation.}
We denote by $O(1)$ and $O^+(1)$ any quantity and strictly positive quantity respectively, which are bounded, and for $b\geq 0$ define
$O(b)=b\cdot O(1)$, $O^+(b)=b\cdot O^+(1)$. Similarly
$o_{a}(1)$ and $o^+_{a}(1)$ for $a>0$ denotes any quantity and any strictly positive quantity respectively, which tend to zero, as $a\longrightarrow 0$, while
$o_{a}(b)=b\cdot o_{a}(1)$ and $o^+_{a}(b)=b\cdot o^+_{a}(1)$ for $b\geq 0$.
For brevity we say
$a=b$ up to $O(d)$ or $o_{c}(d)$,
if $a=b+O(d)$ or $a=b+o_{c}(d)$  respectively.
Finally let
$$\|\cdot\|=\|\cdot\|_{W^{1,2}(M, g_0)} \;\text{ and }\; \|\cdot\|_{L^p}=\|\cdot\|_{L_{g_0}^p}
$$
and observe, that due to $X \subset \{ r<0 \} \cap \{ \Vert u \Vert_{L^{\frac{2n}{n-2}}}=1 \}$, cf. \eqref{r_and_k_definitions},  we have
\begin{equation}
\label{equ_norm}
\Vert \cdot \Vert \simeq \Vert \cdot \Vert_{L^{2}} \simeq \Vert \cdot \Vert_{L^{\frac{2n}{n-2}}}=1 \; \text{ on } \; X.
\end{equation}
With these notations at hand the properties of {\em conformal normal coordinates}, cf. \cite{Guenther_Conformal_Normal_Coordinates,Lee_Parker_Yamabe_Problem},
read as follows.
Given $a \in M$, we may a choose conformal metric
\begin{equation*}
\begin{split}
g_{a}=u_{a}^{\frac{4}{n-2}}g_{0}
\; \text{ with } \;
u_{a}=1+O(d^{2}_{g_{0}}(a,\cdot)),
\end{split}
\end{equation*}
whose volume element in $ g_{a}$-geodesic normal coordinates coincides with the Euclidean one \cite{Guenther_Conformal_Normal_Coordinates}. In particular
$R_{g_{a}}=O(d^{2}_{g_{0}}(a,\cdot))$ for the scalar curvature and
\begin{equation*}
\begin{split}
(\exp_{a}^{g_{0}})^{-}\circ \exp_{a}^{g_{a}}(x)
=
x+O(\vert x \vert^{3})
\end{split}
\end{equation*}
for the exponential maps centered at $ a $.
We then denote by $r_a$ the geodesic distance from $a$ with respect to the metric $g_a$
just introduced.
With these choices  the expression of the
Green's function $G_{g_{ a }}$  for the conformal
Laplacian $L_{g_{a}}$ with pole at $a \in M$, denoted by
$G_{a}=G_{g_{a}}(a,\cdot)$, simplifies to
\begin{equation*}
\begin{split}
G_{ a }=\frac{1}{4n(n-1)\omega _{n}}(r^{2-n}_{a}+H_{ a }), \; r_{a}=d_{g_{a}}(a, \cdot)
, \;
H_{ a }=H_{r,a }+H_{s, a },
\end{split}
\end{equation*}
where $\omega_{n} = |S^{n-1}|$, cf. Section 6 in \cite{Lee_Parker_Yamabe_Problem}.
Here $H_{r,a }\in C^{2, \alpha}$,
while
\begin{equation}\label{Irregular_Part}
\begin{split}
H_{s,a}
=
O
\begin{pmatrix}
0 & \text{ for }\, n=3
\\
r_{a}^{2}\ln r_{a} & \text{ for }\, n=4
\\
r_{a}& \text{ for }\, n=5
\\
\ln r_{a} & \text{ for }\, n=6
\\
r_{a}^{6-n} & \text{ for }\, n\geq 7
\end{pmatrix}
\end{split}
\end{equation}
and
$H_{s,a}\equiv 0$, if $g_{a}$ is flat around $a$, cf. \cite{Mayer_Zhu_Negative_Yamabe_1}.
On $\{ G_{a}>0 \} $ let
for $\lambda>0$
\begin{equation*}
\theta_{a,\lambda}=u_{ a }(\frac{\lambda}{1+\lambda^{2}\gamma_{n} G_{a}^{\frac{2}{2-n}}})^{\frac{n-2}{2}}
,\;
\gamma_{n}=(4n(n-1)\omega _{n})^{\frac{2}{2-n}},
\end{equation*}
cf. \cite{MM1} or \cite{Mayer_Scalar_Curvature_Flow}. Extend $\theta_{a,\lambda}=0$ on $\{ G_{a}\leq 0 \} $  and
with a smooth cut-off function
\begin{equation}\label{cut_off_function}
\eta_{a}=\eta(d_{g_{a}}(a,\cdot))
=
\left\{
\begin{matrix*}
1  &  \;\text{on}\;  &  B_{\epsilon}(a) &=&B^{d_{g_{a}}}_{\epsilon}(a) \\
0 &  \;\text{on}\;  & B_{2\epsilon}(a)^{c}&=&M\setminus  B^{d_{g_{a}}}_{2\epsilon}(a) , \\
\end{matrix*}
\right.
\end{equation}
where $0<\epsilon\ll 1$ is independent of $a\in M$ and such, that on
$B^{d_{g_{a}}}_{2\epsilon}(a)$ the conformal normal coordinates
from $g_{a}$ are well defined and $G_{g_{a}}>0$, define
\begin{equation}\label{Bubble_Definition}
\begin{split}
\varphi_{a, \lambda }
= &
\eta_{a}\theta_{a,\lambda} \geq 0.
\end{split}
\end{equation}
Note,  that
$
\gamma_{n}G^{\frac{2}{2-n}}_{ a }(x) =
d_{g_a}^{2}(a,x) + o(d_{g_a}^{2}(a,x))$,
as $x \longrightarrow a$.
\begin{lemma}[\cite{Mayer_Zhu_Negative_Yamabe_1}]\label{lem_L_g_0_of_bubble}
We have,
\begin{equation*}
L_{g_{0}}\varphi_{a, \lambda}
=
4n(n-1)\varphi_{a, \lambda}^{\frac{n+2}{n-2}}
+
O(\frac{\chi_{B_{2\epsilon}(a)\setminus B_{\epsilon}(a)}}{\lambda^{\frac{n-2}{2}}})
+
o_{\frac{1}{\lambda}}(\frac{1}{\lambda^2}+\frac1{\lambda^{\frac{n-2}{2}}}) \; \text{ in } \; W^{-1,2}(M).
\end{equation*}
The expansion above persists upon taking the $\lambda \partial_\lambda$ and $\frac{\nabla_{a}}{\lambda}$ derivatives.
\end{lemma}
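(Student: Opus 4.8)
The plan is to reduce, via the conformal covariance of the conformal Laplacian, to a computation on the fixed ball $B_{2\epsilon}(a)$ carried out in the conformal normal coordinates of $g_a$, where $\theta_{a,\lambda}/u_a$ is a controlled perturbation of the flat Aubin--Talenti bubble and the underlying algebra is exact. First I would write $\varphi_{a,\lambda}=u_a\cdot\eta_a\psi$ with $\psi:=\theta_{a,\lambda}/u_a=\bigl(\tfrac{\lambda}{1+\lambda^2\gamma_nG_a^{2/(2-n)}}\bigr)^{(n-2)/2}$, so that $L_{g_0}\varphi_{a,\lambda}=u_a^{\frac{n+2}{n-2}}L_{g_a}(\eta_a\psi)$ by conformal covariance, $u_a=1+O(d_{g_0}^2(a,\cdot))$ being harmless. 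By the Leibniz rule, $L_{g_a}(\eta_a\psi)=\eta_aL_{g_a}\psi-c_n\bigl(2\langle\nabla\eta_a,\nabla\psi\rangle_{g_a}+\psi\,\Delta_{g_a}\eta_a\bigr)$; the commutator is supported on the annulus $B_{2\epsilon}(a)\setminus B_\epsilon(a)$, where $\gamma_nG_a^{2/(2-n)}$ is pinched between two positive constants depending only on $\epsilon$, so $|\psi|+|\nabla\psi|+|\nabla^2\psi|\le C(\epsilon)\lambda^{-(n-2)/2}$ there; together with $u_a^{\frac{n+2}{n-2}}=O(1)$ this accounts for the term $O\bigl(\chi_{B_{2\epsilon}(a)\setminus B_\epsilon(a)}\,\lambda^{-(n-2)/2}\bigr)$, while outside $B_{2\epsilon}(a)$ one has $\varphi_{a,\lambda}\equiv0$. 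On $B_\epsilon(a)$, where $\eta_a\equiv1$, it then remains to expand $L_{g_a}\psi$.

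Here I would exploit that $L_{g_a}G_a=0$ on $B_{2\epsilon}(a)\setminus\{a\}$, i.e.\ $\Delta_{g_a}G_a=c_n^{-1}R_{g_a}G_a$. Writing $\psi=F(G_a)$ with $F(t)=\bigl(\tfrac{\lambda}{1+\lambda^2\gamma_nt^{2/(2-n)}}\bigr)^{(n-2)/2}$ and using $\Delta_{g_a}F(G_a)=F'(G_a)\Delta_{g_a}G_a+F''(G_a)|\nabla_{g_a}G_a|_{g_a}^2$, the first-order term collapses against $\Delta_{g_a}G_a$ and one obtains the identity
\[
L_{g_a}\psi\;=\;-c_n\,F''(G_a)\,|\nabla_{g_a}G_a|_{g_a}^2\;+\;R_{g_a}\bigl(\psi-G_aF'(G_a)\bigr),
\]
valid pointwise off $a$ and, since $\psi\in H^1$ is bounded near $a$ and the right-hand side is locally integrable, extending across $a$ in $W^{-1,2}$. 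In the flat model, where $g_a$ is Euclidean near $a$, so $R_{g_a}\equiv0$, $G_a=\tfrac{1}{4n(n-1)\omega_n}|x|^{2-n}$, and, by the very choice of $\gamma_n$, $\gamma_nG_a^{2/(2-n)}=|x|^2$ and hence $\psi=U_\lambda:=(\lambda/(1+\lambda^2|x|^2))^{(n-2)/2}$, this identity is precisely $-c_n\Delta U_\lambda=4n(n-1)U_\lambda^{(n+2)/(n-2)}$. Thus $4n(n-1)\psi^{(n+2)/(n-2)}$ is the leading term in general, and the task is to bound the deviations from this model.

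To that end I would work in conformal normal coordinates of $g_a$ on $B_{2\epsilon}(a)$, where $\det g_a\equiv1$, $R_{g_a}=O(|x|^2)$, $g_a^{ij}=\delta^{ij}+O(|x|^2)$, $d_{g_a}(a,x)=|x|$, and, with $G_a=\tfrac{1}{4n(n-1)\omega_n}(|x|^{2-n}+H_a)$ and $H_a=H_{r,a}+H_{s,a}$ as in \eqref{Irregular_Part}, one has $\gamma_nG_a^{2/(2-n)}=|x|^2\bigl(1+O(|x|^{\min(n-2,4)})\bigr)$ (with a $|\ln|x||$ factor for $n=4,6$; exactly $|x|^2$ when $g_a$ is flat near $a$, since then $H_{s,a}\equiv0$) and $|\nabla_{g_a}G_a|_{g_a}^2$ equal to a positive constant times $|x|^{2-2n}\bigl(1+O(|x|^2)\bigr)$. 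Substituting into the identity and comparing each of $F$, $F'$, $F''$, $|\nabla_{g_a}G_a|_{g_a}^2$, $R_{g_a}$ with its flat-model value, every deviation appears as a power $|x|^\beta$, $\beta\ge1$ (originating from the Weyl-type metric error, from $H_a$, or from $R_{g_a}$), multiplying the profile $\psi\simeq U_\lambda$ or a derivative of it. Since $\psi$ concentrates at scale $\lambda^{-1}$, the substitution $x=y/\lambda$ inside the $W^{-1,2}$-norm, estimated by duality in $L^{2n/(n+2)}$, converts each such power of $|x|$ into a factor $\lambda^{-\beta}$ up to logarithms and a vanishing $o_{1/\lambda}(1)$, so the total residual on $B_\epsilon(a)$ is $o_{1/\lambda}\bigl(\lambda^{-2}+\lambda^{-(n-2)/2}\bigr)$, uniformly in $a\in M$; this uniformity is precisely why $\epsilon$ is chosen independent of $a$ and conformal normal coordinates are used. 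Reassembling and using $u_a^{\frac{n+2}{n-2}}\eta_a\psi^{(n+2)/(n-2)}=\varphi_{a,\lambda}^{(n+2)/(n-2)}$ up to terms supported on $B_{2\epsilon}(a)\setminus B_\epsilon(a)$ of size $O(\lambda^{-(n+2)/2})=o(\lambda^{-(n-2)/2})$ yields the claimed expansion.

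For the persistence under $\lambda\partial_\lambda$ and $\lambda^{-1}\nabla_a$: both commute with $L_{g_0}$ (which depends on neither $a$ nor $\lambda$), both are the natural scaling fields of the family $\{\varphi_{a,\lambda}\}$, and they map it to functions of the same amplitude $\lambda^{(n-2)/2}$ and concentration scale $\lambda^{-1}$; applying them to the identity above and to each error term, and differentiating in the $\nabla_a$ case also through $u_a$, $G_a$, $H_a$, $\eta_a$ (every such derivative being bounded, and $\nabla_a\eta_a$ again supported on the annulus), produces quantities of the same or smaller order, so the symbols $O(\cdot)$ and $o_{1/\lambda}(\cdot)$ are preserved. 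The hard part will be the error bookkeeping of the third paragraph: one must run a careful case analysis in the dimension, since $H_{s,a}$ genuinely changes form at $n=3,4,5,6$ and $n\ge7$ (with logarithms at $n=4,6$), and one must verify that the Weyl-type metric deviation together with the scalar-curvature term $R_{g_a}(\psi-G_aF'(G_a))$ contribute genuinely little-o, not merely big-O, of $\lambda^{-2}+\lambda^{-(n-2)/2}$ in $W^{-1,2}(M)$.
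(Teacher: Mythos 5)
Your plan is essentially the same as the paper's: conformal covariance to $g_a$, the relation $L_{g_a}G_a=0$ off the pole, conformal normal coordinates, the cut-off commutator on the annulus, and a dimension-dependent error bookkeeping from $H_a$ (via \eqref{Irregular_Part}) and $R_{g_a}=O(r_a^2)$. The only cosmetic difference is that you re-derive via the chain-rule identity $L_{g_a}F(G_a)=-c_nF''(G_a)|\nabla G_a|^2+R_{g_a}(F(G_a)-G_aF'(G_a))$ the explicit expansion that the paper simply quotes from Lemma 3.3 of \cite{Mayer_Scalar_Curvature_Flow} / Lemma 4.2 of \cite{Mayer_Zhu_Negative_Yamabe_1} before estimating its terms in the same way.
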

\begin{proof}
As in Lemma 3.3 in \cite{Mayer_Scalar_Curvature_Flow} or Lemma 4.2 in \cite{Mayer_Zhu_Negative_Yamabe_1} we find
\begin{equation*}
\begin{split}
& L_{g_{0}}  \varphi_{a,\lambda}
= \;
4n(n-1)\varphi_{a,\lambda}^{\frac{n+2}{n-2}}\\
&
-c_{n}\Delta_{g_{0}}\eta_{a}\theta_{a,\lambda}-2c_{n}\langle\nabla \eta_{a},\nabla \theta_{a,\lambda}\rangle_{g_{0}}
+4n(n-1)(\eta_{a}-\eta_{a}^{\frac{n+2}{n-2}})\theta_{a,\lambda}^{\frac{n+2}{n-2}}
\\
&
-
2nc_{n}
(1+o_{r_{a}}(1))r_{a}^{n-2}((n-1)H_{a}+r_{a}\partial_{r_{a}}H_{a}) \eta_a \theta_{a, \lambda}^{\frac{n+2}{n-2}} +
\frac{u_{a}^{\frac{2}{n-2}}R_{g_{a}}}{\lambda}\eta_a\theta_{a, \lambda}^{\frac{n}{n-2}},
\end{split}
\end{equation*}
where the terms of the second line above can be pointwise subsumed under some
$$O(\lambda^{\frac{2-n}{2}})\chi_{B_{2\epsilon}(a)\setminus B_{\epsilon}(a)}.$$
Moreover,
recalling $R_{g_{a}}=O(r_{a}^{2})$ and \eqref{Irregular_Part},
those of the third line are of order
$$\left\{
\begin{matrix*}[l]
o_{\frac1\lambda}(\lambda^{\frac{2-n}{2}})  &  \;\text{ for }\;  &  3\leq &n& \leq 5 &\\
o_{\frac1\lambda}(\lambda^{-2}) &  \;\text{ for }\;  &  &n&\geq 6 &
\end{matrix*}
\right.
$$
in $W^{-1,2}$.
\end{proof}

\begin{remark}
 Note, that in contrast to our previous paper \cite{Mayer_Zhu_Negative_Yamabe_1}, where we assume the flatness condition $Cond_{n}$, here we have an additional error term
 $$o_{\frac{1}{\lambda}}(\lambda^{-2}),$$
 which is of no concern, as we now target $\lambda^{-2}+\lambda^{-\frac{n-2}{2}}$ as the level of precision.
\end{remark}

\noindent {\bf Notation.}
For $k,l=1,2,3$ and $ \lambda_{i} >0, \, a _{i}\in M, \,i= 1, \ldots,q$ let
\begin{enumerate}[label=(\roman*)]
 \item
$
\varphi_{i}
=
\varphi_{a_{i}, \lambda_{i}}$ and $(d_{1,i},d_{2,i},d_{3,i})
=
(1,-\lambda_{i}\partial_{\lambda_{i}}, \frac{1}{\lambda_{i}}\nabla_{a_{i}})
$
 \item
$\phi_{1,i}=\varphi_{i}, \;\phi_{2,i}
=
-\lambda_{i} \partial_{\lambda_{i}}\varphi_{i},
\;\phi_{3,i}= \frac{1}{\lambda_{i}} \nabla_{ a _{i}}\varphi_{i}$, so
$
\phi_{k,i}=d_{k,i}\varphi_{i}.
$
\end{enumerate}
Note, that with the above definitions $\phi_{k,i}$ is uniformly bounded in $H^{1}(M)$. Moreover we have the following standard inter- and selfaction
estimates;  we refer to Lemma 4.3 in \cite{Mayer_Zhu_Negative_Yamabe_1} and the details in Lemma 3.5\footnote
{
See also Lemma 3.5 and its proof in the more
detailed version of \cite{Mayer_Scalar_Curvature_Flow}
found at
\url{http://geb.uni-giessen.de/geb/volltexte/2015/11691/}
}
in \cite{Mayer_Scalar_Curvature_Flow}.

\begin{lemma}[\cite{Mayer_Zhu_Negative_Yamabe_1}]\label{lem_interactions}
Let $k,l=1,2,3$ and $i,j = 1, \ldots,q$. Then for
\begin{equation*}
\varepsilon_{i,j}
=
\eta(d_{g_{0}}(a_{i},a_{j}))
(
\frac{\lambda_{j}}{\lambda_{i}}
+
\frac{\lambda_{i}}{\lambda_{j}}
+
\lambda_{i}\lambda_{j}\gamma_{n}G_{g_{0}}^{\frac{2}{2-n}}(a _{i},a _{j})
)^{\frac{2-n}{2}}
\end{equation*}
with a suitable cut-off function
$$
\eta
=
\left\{
\begin{matrix*}[l]
1  &  \;\text{on}\;  &  r<4\epsilon \\
0 &  \;\text{on}\;  &  r\geq 6 \epsilon \\
\end{matrix*}
\right.
$$
and $\epsilon>0$ sufficiently small there holds
\begin{enumerate}[label=(\roman*)]
\item
$
\vert \phi_{k,i}\vert,
\vert \lambda_{i}\partial_{\lambda_{i}}\phi_{k,i}\vert,
\vert \frac{1}{\lambda_{i}}\nabla_{a_{i}} \phi_{k,i}\vert
\leq
C \chi_{\{\eta_{a_i}>0\}}\theta_{i} $, cf. \eqref{cut_off_function}

\item
$
\int \varphi_{i}^{\frac{4}{n-2}}
\phi_{k,i}\phi_{k,i}d\mu_{g_{0}}
=
c_{k}\cdot id
+
O(\frac{1}{\lambda_{i}^{2}}+\frac{1}{\lambda_{i}^{n-2}})
, \;c_{k}>0$

\item
for  $i\neq j$ up to some $o_\epsilon(
\varepsilon_{i,j}+\frac{1}{\lambda_{i}^{2}})$
\begin{equation*}
\int \varphi_{i}^{\frac{n+2}{n-2}}\phi_{k,j}d\mu_{g_{0}}
=
b_{k}d_{k,i}\varepsilon_{i,j}
=
\int \varphi_{i}d_{k,j}\varphi_{j}^{\frac{n+2}{n-2}}  d\mu_{g_{0}}
\end{equation*}
 \item
$
\int \varphi_{i}^{\frac{4}{n-2}}
\phi_{k,i}\phi_{l,i}d\mu_{g_{0}}
=
O(\frac{1}{\lambda_{i}^{2}}+\frac{1}{\lambda_{i}^{{n-2}}})$
for $k\neq l$ and for $k=2,3$
$$
\int \varphi_{i}^{\frac{n+2}{n-2}}
\phi_{k,i}d\mu_{g_{0}}
=
O(\frac{1}{\lambda_{i}^{{n-2}}})
$$
 \item
$
\int \varphi_{i}^{\alpha}\varphi_{j}^{\beta} d\mu_{g_{0}}
=
O(\varepsilon_{i,j}^{\beta})
$
for $i\neq j,\;\alpha +\beta=\frac{2n}{n-2}, \; \alpha>\frac{n}{n-2}>\beta\geq 1$
\item
$
\int \varphi_{i}^{\frac{n}{n-2}}\varphi_{j}^{\frac{n}{n-2}} d\mu_{g_{0}}
=
O(\varepsilon^{\frac{n}{n-2}}_{i,j}\ln \varepsilon_{i,j}), \,i\neq j
$
 \item
$
(1, \lambda_{i}\partial_{\lambda_{i}}, \frac{1}{\lambda_{i}}\nabla_{a_{i}})\varepsilon_{i,j}=O(\varepsilon_{i,j})
+
o_{\sfrac1{\lambda_i}+\sfrac1{\lambda_j}}(\frac{1}{\lambda_{i}^{\frac{n-2}{2}}}+\frac{1}{\lambda_{j}^{\frac{n-2}{2}}})
, \,i\neq j$,
\end{enumerate}
with constants $ b_{1}=b_{2}=b_{3}=\underset{\R^{n}}{\int}\frac{dx}{(1+r^{2})^{\frac{n+2}{2}}}=b_0$ and
$$
c_{1}=\underset{\R^{n}}{\int}\frac{dx}{(1+r^{2})^{n}},\,
c_{2}=\frac{(n-2)^{2}}{4}\underset{\R^{n}}{\int}\frac{( r^{2}-1)^{2}dx}{(1+r^{2})^{n+2}},\,
c_{3}={(n-2)^{2}}\underset{\R^{n}}{\int}\frac{r^{2}dx}{(1+r^{2})^{n+1}}.
$$
\end{lemma}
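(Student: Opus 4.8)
The estimates collected in Lemma \ref{lem_interactions} are of the by now classical Bahri type, and the plan is to reduce every quantity to an explicitly computable Euclidean model integral plus controlled remainders. First I would pass to the conformal normal coordinates centred at $a_{i}$ recalled above, available on $B^{d_{g_{a_{i}}}}_{2\epsilon}(a_{i})$: there $g_{a_{i}}$ has Euclidean volume element, so $d\mu_{g_{0}}=u_{a_{i}}^{-\frac{2n}{n-2}}\,dx$, and
\[
\theta_{a_{i},\lambda_{i}}
=
u_{a_{i}}\Big(\frac{\lambda_{i}}{1+\lambda_{i}^{2}\gamma_{n}G_{a_{i}}^{\frac{2}{2-n}}}\Big)^{\frac{n-2}{2}},
\qquad
\gamma_{n}G_{a_{i}}^{\frac{2}{2-n}}(x)=|x-a_{i}|^{2}+o(|x-a_{i}|^{2}),
\]
so that $\varphi_{i}=\eta_{a_{i}}\theta_{a_{i},\lambda_{i}}$ agrees, up to the smooth factor $u_{a_{i}}=1+O(|x-a_{i}|^{2})$, the $o(|x-a_{i}|^{2})$ discrepancy between $\gamma_{n}G_{a_{i}}^{\frac{2}{2-n}}$ and $|x-a_{i}|^{2}$, and the cut-off $\eta_{a_{i}}$ (supported in $B_{2\epsilon}\setminus B_{\epsilon}$, with bounded derivatives), with the Euclidean bubble $U_{\lambda_{i},a_{i}}(x)=\lambda_{i}^{\frac{n-2}{2}}(1+\lambda_{i}^{2}|x-a_{i}|^{2})^{-\frac{n-2}{2}}$. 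After the rescaling $y=\lambda_{i}(x-a_{i})$ all bubble integrals become $\lambda_{i}$-independent integrals of $U(y)=(1+|y|^{2})^{-\frac{n-2}{2}}$ and of its dilation and translation Jacobi fields.

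Granting this reduction, the individual items follow routinely. For (i) one differentiates the closed form and uses the elementary bounds $|1-\lambda^{2}|x|^{2}|(1+\lambda^{2}|x|^{2})^{-\frac n2}\le(1+\lambda^{2}|x|^{2})^{-\frac{n-2}{2}}$ and $\lambda|x|(1+\lambda^{2}|x|^{2})^{-\frac n2}\le(1+\lambda^{2}|x|^{2})^{-\frac{n-2}{2}}$, iterating for the second derivatives, while the $\eta_{a_{i}}$-terms are absorbed into $\chi_{\{\eta_{a_{i}}>0\}}\theta_{i}$. For the self-actions (ii) and (iv), the powers of $u_{a_{i}}$ cancel to leading order in the products against $u_{a_{i}}^{-\frac{2n}{n-2}}\,dx$ (the exponents sum to $0$), the rescaled model integrals are exactly $c_{1},c_{2},c_{3}$ and (in (iii)) $b_{0}$, the orthogonality of $U$ to its Jacobi fields — $\int U^{\frac{n+2}{n-2}}\lambda\partial_{\lambda}U=\int U^{\frac{n+2}{n-2}}\nabla_{a}U=0$ and $\int U^{\frac{4}{n-2}}(\lambda\partial_{\lambda}U)\nabla_{a}U=0$ by parity — produces the claimed vanishing leading terms, and the remainders $O(\lambda_{i}^{-2}+\lambda_{i}^{-(n-2)})$ collect respectively the $O(|x-a_{i}|^{2})$-corrections from $u_{a_{i}}$ and the curvature, and the truncation-annulus contributions weighted as in Lemma \ref{lem_L_g_0_of_bubble}. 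For the two-bubble terms (iii), (v), (vi), if $d_{g_{0}}(a_{i},a_{j})>6\epsilon$ the cut-offs make $\varphi_{i},\varphi_{j}$ disjointly supported, $\varepsilon_{i,j}=0$, and everything is trivial; otherwise the concentration of $\varphi_{i}^{\frac{n+2}{n-2}}$ at $a_{i}$ reduces $\int\varphi_{i}^{\frac{n+2}{n-2}}\varphi_{j}$ to $\big(\int\varphi_{i}^{\frac{n+2}{n-2}}\big)\varphi_{j}(a_{i})+o(\varepsilon_{i,j})=b_{0}\varepsilon_{i,j}+o(\varepsilon_{i,j})$, with the regularised distance $\gamma_{n}G_{g_{0}}^{\frac{2}{2-n}}(a_{i},a_{j})$ in place of $|a_{i}-a_{j}|^{2}$; differentiating under the integral in $(\lambda_{j},a_{j})$, where $\phi_{k,j}$ is supported, then yields the form $b_{k}d_{k,i}\varepsilon_{i,j}=\int\varphi_{i}d_{k,j}\varphi_{j}^{\frac{n+2}{n-2}}$ with $b_{1}=b_{2}=b_{3}=b_{0}$, by the symmetry of the model interaction and an integration by parts; (v) and (vi) are H\"older-type estimates of $\int U_{\lambda_{i},a_{i}}^{\alpha}U_{\lambda_{j},a_{j}}^{\beta}$, the logarithm in (vi) arising from the borderline exponent $\frac{n}{n-2}$; finally (vii) follows by differentiating the closed expression for $\varepsilon_{i,j}$, the $o(\cdot)$-term absorbing the $\lambda$-derivatives of $\eta$ and of the $o(|x-a_{i}|^{2})$ discrepancy above.

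The step requiring the most care — and the reason this is not a verbatim copy of Lemma 4.3 in \cite{Mayer_Zhu_Negative_Yamabe_1} or Lemma 3.5 in \cite{Mayer_Scalar_Curvature_Flow} — is the uniform control of the remainders when the flatness condition $Cond_{n}$ is \emph{not} assumed, cf. the remark following Lemma \ref{lem_L_g_0_of_bubble}: one must verify that the extra contributions coming from the curvature and from the irregular part $H_{s,a}$ of the Green's function \eqref{Irregular_Part}, together with the book-keeping of which of $\lambda_{j}/\lambda_{i}$, $\lambda_{i}/\lambda_{j}$, $\lambda_{i}\lambda_{j}\gamma_{n}G_{g_{0}}^{\frac{2}{2-n}}(a_{i},a_{j})$ dominates in the two-bubble estimates, are indeed swallowed by the claimed $O(\lambda_{i}^{-2}+\lambda_{i}^{-(n-2)})$ and $o_{\epsilon}(\varepsilon_{i,j}+\lambda_{i}^{-2})$. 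For the remaining, entirely classical, parts I would simply refer to the cited lemmas.
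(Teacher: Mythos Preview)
Your proposal is correct and follows the classical Bahri approach that the cited references contain. Note that the paper itself does not prove this lemma: it simply refers to Lemma~4.3 in \cite{Mayer_Zhu_Negative_Yamabe_1} and Lemma~3.5 in \cite{Mayer_Scalar_Curvature_Flow} for the details, so your sketch is in fact more explicit than what appears in the present paper, and your remark that the absence of the flatness condition $Cond_{n}$ only affects the precision of the remainders (now $O(\lambda_{i}^{-2}+\lambda_{i}^{-(n-2)})$ rather than finer) is exactly the point made in the remark following Lemma~\ref{lem_L_g_0_of_bubble}.
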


\

\begin{definition}\label{V_p_e}
For $\varepsilon>0$ and $ u\in H^{1}(M)$ let
\begin{flalign}
&
(i)
&
A_{u}(p, \varepsilon)
 =
\{ \;
(
\alpha ,\alpha_i, & \,\lambda_i, a_i)\in \R_+  \times \R_+^p\times \R_{+}^p \times M^p
\; : \;
 \notag \\
& & &  \textstyle\sum_{i} \alpha_i^2\geq\varepsilon^2,\lambda_i^{-1}\leq \varepsilon,\;
\Vert u-\alpha -\alpha_i\varphi_{a_i, \lambda_i}\Vert
\leq\varepsilon
\; \} &
\notag
\end{flalign}	
\vspace{-12pt}
\begin{flalign}
&
(ii) &
U(p, \varepsilon)
=
\{
u\in W^{1,2}(M)
\mid
A_{u}(\varepsilon)\neq \emptyset
\}
\cap
\{\|u\|_{L^{\frac{2n}{n-2}}}=1\}\cap\{u>0\}.
& &\notag
\end{flalign}	
\end{definition}

As in \cite{Mayer_Zhu_Negative_Yamabe_1}, we choose a convenient representation on $U(p, \varepsilon)$.

\begin{lemma}\label{lem_optimal_choice1} 
For every $\varepsilon_{0}>0$  there exists
$$0<\varepsilon_{2}<\varepsilon_{1}<\varepsilon_{0}$$
such, that  for any
$u\in U(p, \varepsilon_2)$
there exists a unique
$$(\alpha, \alpha_i, a_i, \lambda_i)\in U(p, \varepsilon_1),$$
for which, letting
$v=u-\alpha -\alpha^{i}\varphi_{a_{i},\lambda_{i}},
$
\begin{enumerate}[label=(\roman*)]
\item \quad $v \perp_{L_{g_{0}}}span\{1,\varphi_{ a_i,\lambda_i}, i=1,...,p\}$
\item \quad the quantities
\begin{itemize}
\item[($\lambda$)] \quad
$\langle\lambda_{i}\partial_{\lambda_{i}}\varphi_{a_{i},\lambda_{i}},v\rangle_{L_{g_{0}}}$,
$\int u^{\frac{4}{n-2}}\lambda_{i}\partial_{\lambda_{i}}\varphi_{a_{i},\lambda_{i}}vd\mu_{g_{0}}$

\item[($a$)] \quad
$\langle \frac{\nabla_{a_{i}}}{\lambda_{i}}\varphi_{a_{i},\lambda_{i}},v\rangle_{L_{g_{0}}}$,
$\int u^{\frac{4}{n-2}}\frac{\nabla_{a_{i}}}{\lambda_{i}}\varphi_{a_{i},\lambda_{i}}vd\mu_{g_{0}}$
\end{itemize}
are of order
$O
(
\sum_{i}
\frac{1}{\lambda_{i}^{{n-2}}}+ \sum_{i\neq j}\varepsilon_{i,j}^{2}   +  \Vert v \Vert^{2}
)
+\sum_{i}o_{\frac{1}{\lambda_i}}(\frac{1}{\lambda_i^4}).
$
\end{enumerate}
\end{lemma}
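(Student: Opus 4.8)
\noindent The plan is to produce $(\alpha,\alpha_{i},a_{i},\lambda_{i})$ as the unique zero of a finite‑dimensional Lyapunov--Schmidt type map, anchored at the approximate data witnessing $u\in U(p,\varepsilon_{2})$. Fix $u\in U(p,\varepsilon_{2})$ and $(\bar\alpha,\bar\alpha_{i},\bar a_{i},\bar\lambda_{i})\in A_{u}(p,\varepsilon_{2})$, cf. Definition \ref{V_p_e}, so that $v_{\bar\zeta}:=u-\bar\alpha-\sum_{i}\bar\alpha_{i}\varphi_{\bar a_{i},\bar\lambda_{i}}$ satisfies $\Vert v_{\bar\zeta}\Vert\leq\varepsilon_{2}$, $\bar\lambda_{i}^{-1}\leq\varepsilon_{2}$ and $\sum_{i}\bar\alpha_{i}^{2}\geq\varepsilon_{2}^{2}$, and where we may take each $\bar\alpha_{i}$ bounded below (otherwise the corresponding bubble is negligible and one works with fewer of them). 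For $\zeta=(\alpha,\alpha_{i},a_{i},\lambda_{i})$ near $\bar\zeta$ put $v_{\zeta}=u-\alpha-\sum_{i}\alpha_{i}\varphi_{a_{i},\lambda_{i}}$ and consider
$$
\mathcal{F}(\zeta)
=
\Bigl(
\langle v_{\zeta},1\rangle_{L_{g_{0}}},\;
\langle v_{\zeta},\varphi_{a_{i},\lambda_{i}}\rangle_{L_{g_{0}}},\;
\langle v_{\zeta},\lambda_{i}\partial_{\lambda_{i}}\varphi_{a_{i},\lambda_{i}}\rangle_{L_{g_{0}}},\;
\langle v_{\zeta},\tfrac{1}{\lambda_{i}}\nabla_{a_{i}}\varphi_{a_{i},\lambda_{i}}\rangle_{L_{g_{0}}}
\Bigr)_{i=1,\ldots,p}
\in\R^{1+2p+np},
$$
whose zeros render $v_{\zeta}$ orthogonal in $\langle\cdot,\cdot\rangle_{L_{g_{0}}}$ to the span of $1$ and all $\varphi_{a_{i},\lambda_{i}}$, $\lambda_{i}\partial_{\lambda_{i}}\varphi_{a_{i},\lambda_{i}}$, $\tfrac{1}{\lambda_{i}}\nabla_{a_{i}}\varphi_{a_{i},\lambda_{i}}$; such a zero in particular satisfies (i) and makes the first quantity in each line of (ii) vanish identically. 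Equivalently, $\mathcal{F}=0$ is the Euler--Lagrange system of $\zeta\mapsto\langle v_{\zeta},v_{\zeta}\rangle_{L_{g_{0}}}$ over the bounded set $\{\Vert v_{\zeta}\Vert\leq C\varepsilon_{1},\,\lambda_{i}^{-1}\leq\varepsilon_{1}\}$.

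\smallskip

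\noindent The core point is the invertibility of $D\mathcal{F}(\bar\zeta)$ \emph{uniformly} in $\bar\zeta$. Using $\partial_{\alpha}v_{\zeta}=-1$, $\partial_{\alpha_{j}}v_{\zeta}=-\varphi_{a_{j},\lambda_{j}}$ and, in the rescaled variables $\lambda_{j}\partial_{\lambda_{j}}$, $\tfrac{1}{\lambda_{j}}\nabla_{a_{j}}$, $d_{k,j}v_{\zeta}=-\alpha_{j}\phi_{k,j}$ for $k=2,3$, the matrix $D\mathcal{F}(\bar\zeta)$ has, after an obvious sign normalisation, diagonal blocks $\mu_{g_{0}}(M)$, the $\langle\cdot,\cdot\rangle_{L_{g_{0}}}$‑Gram matrix of $\{\varphi_{\bar a_{i},\bar\lambda_{i}}\}$, and $\bar\alpha_{i}$ times $\int\varphi_{\bar a_{i},\bar\lambda_{i}}^{\frac{4}{n-2}}\phi_{k,i}\phi_{k,i}\,d\mu_{g_{0}}$, up to off‑diagonal entries which by Lemma \ref{lem_interactions}\,(iii)--(vi) (and Lemma \ref{lem_L_g_0_of_bubble}, replacing $L_{g_{0}}\varphi$ by $4n(n-1)\varphi^{\frac{n+2}{n-2}}$ modulo small errors) are $O(\varepsilon_{i,j})+o_{\frac{1}{\bar\lambda}}(1)$. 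By the same two lemmas the diagonal blocks equal $\mu_{g_{0}}(M)\neq0$, a diagonal matrix with entries $\simeq4n(n-1)c_{1}>0$, and positive multiples of $\bar\alpha_{i}c_{k}\,\mathrm{id}$, $c_{k}>0$; as each $\bar\alpha_{i}$ is bounded below, for $\varepsilon_{0}$ small $D\mathcal{F}(\bar\zeta)$ is invertible with a uniformly bounded inverse. Since $\Vert\mathcal{F}(\bar\zeta)\Vert=O(\varepsilon_{2})$ and $D\mathcal{F}$ is Lipschitz on a ball of radius $\sim\varepsilon_{1}$ with constants uniform in $\bar\zeta$, a quantitative inverse function theorem produces a unique zero $\zeta^{\ast}$ of $\mathcal{F}$ with $\Vert\zeta^{\ast}-\bar\zeta\Vert\leq C\varepsilon_{2}$; choosing $0<\varepsilon_{2}\ll\varepsilon_{1}\ll\varepsilon_{0}$ keeps $\zeta^{\ast}$ within the parameter range of $U(p,\varepsilon_{1})$ (so $(\lambda_{i}^{\ast})^{-1}\leq\varepsilon_{1}$, $\sum_{i}(\alpha_{i}^{\ast})^{2}\geq\varepsilon_{1}^{2}$, $\Vert v_{\zeta^{\ast}}\Vert\leq\varepsilon_{1}$), and any two zeros lying in $U(p,\varepsilon_{2})$ coincide by the same non‑degeneracy. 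This gives existence, uniqueness, (i), and the vanishing of the $\langle\cdot,v\rangle_{L_{g_{0}}}$‑terms in (ii).

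\smallskip

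\noindent It remains to estimate the weighted quantities in (ii). Let $\phi$ denote $\lambda_{i}\partial_{\lambda_{i}}\varphi_{a_{i}^{\ast},\lambda_{i}^{\ast}}$ or $\tfrac{1}{\lambda_{i}}\nabla_{a_{i}}\varphi_{a_{i}^{\ast},\lambda_{i}^{\ast}}$ and $v=v_{\zeta^{\ast}}$. Since $\langle\phi,v\rangle_{L_{g_{0}}}=0$, for every constant $C$ one has $\int u^{\frac{4}{n-2}}\phi\,v\,d\mu_{g_{0}}=\int\bigl(u^{\frac{4}{n-2}}\phi-C\,L_{g_{0}}\phi\bigr)v\,d\mu_{g_{0}}$. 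By the $\lambda\partial_{\lambda}$‑ and $\tfrac{1}{\lambda}\nabla_{a}$‑versions of Lemma \ref{lem_L_g_0_of_bubble}, $L_{g_{0}}\phi$ has leading part $4n(n-1)\tfrac{n+2}{n-2}\varphi_{a_{i}^{\ast},\lambda_{i}^{\ast}}^{\frac{4}{n-2}}\phi$ plus $O(\chi_{B_{2\epsilon}(a_{i}^{\ast})\setminus B_{\epsilon}(a_{i}^{\ast})}(\lambda_{i}^{\ast})^{-\frac{n-2}{2}})+o_{\frac{1}{\lambda_{i}^{\ast}}}((\lambda_{i}^{\ast})^{-2}+(\lambda_{i}^{\ast})^{-\frac{n-2}{2}})$ in $W^{-1,2}$, while $u^{\frac{4}{n-2}}=(\alpha^{\ast}+\sum_{j}\alpha_{j}^{\ast}\varphi_{a_{j}^{\ast},\lambda_{j}^{\ast}}+v)^{\frac{4}{n-2}}$ has leading part $(\alpha_{i}^{\ast})^{\frac{4}{n-2}}\varphi_{a_{i}^{\ast},\lambda_{i}^{\ast}}^{\frac{4}{n-2}}$ on the support of $\phi$. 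Choosing $C$ so these agree, the remainder splits into cross‑bubble contributions, of order $\sum_{j\neq i}\varepsilon_{i,j}^{2}$ by Lemma \ref{lem_interactions}\,(iii),(v),(vi); terms linear in $v$ paired against $v$, of order $\Vert v\Vert^{2}$; Green's‑function and self‑interaction corrections, of order $(\lambda_{i}^{\ast})^{-(n-2)}$; and the $W^{-1,2}$‑errors paired with $v$, bounded via Young's inequality by $o_{\frac{1}{\lambda_{i}^{\ast}}}((\lambda_{i}^{\ast})^{-2})\Vert v\Vert\leq o_{\frac{1}{\lambda_{i}^{\ast}}}((\lambda_{i}^{\ast})^{-4})+O(\Vert v\Vert^{2})$ — altogether the order stated in (ii).

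\smallskip

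\noindent The main obstacle is the uniform control in the second step: keeping the inverse of $D\mathcal{F}(\bar\zeta)$ and the Lipschitz constant of $D\mathcal{F}$ bounded as $\bar\lambda_{i}\to\infty$ and the concentration points $\bar a_{i}$ approach each other forces one to work systematically in the rescaled derivatives $\lambda_{i}\partial_{\lambda_{i}}$, $\tfrac{1}{\lambda_{i}}\nabla_{a_{i}}$ and to use the precise interaction asymptotics of Lemma \ref{lem_interactions}; the secondary, more computational, difficulty is the error bookkeeping in (ii) — obtaining the cross terms at the sharp order $\sum_{i\neq j}\varepsilon_{i,j}^{2}$ rather than merely $O(\varepsilon_{i,j})$, which crucially exploits that (i) has already been imposed, and separating the $O(\Vert v\Vert^{2})$ from the $o_{\frac{1}{\lambda_{i}}}(\lambda_{i}^{-4})$ contributions.
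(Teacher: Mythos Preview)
Your approach is correct and is in fact the more standard Lyapunov--Schmidt route, but it differs genuinely from what the paper does. You impose the \emph{full} $L_{g_{0}}$-orthogonality of $v$ to $\{1,\varphi_{i},\lambda_{i}\partial_{\lambda_{i}}\varphi_{i},\tfrac{1}{\lambda_{i}}\nabla_{a_{i}}\varphi_{i}\}$ and solve the resulting square system by the inverse function theorem; as a by-product the first quantity in each line of (ii) vanishes identically, and only the weighted integrals $\int u^{4/(n-2)}\phi\,v$ require estimation. The paper, by contrast, imports the construction from Lemma~4.6 of \cite{Mayer_Zhu_Negative_Yamabe_1} (with $\omega=1$), in which the $L_{g_{0}}$-orthogonality to $\lambda_{i}\partial_{\lambda_{i}}\varphi_{i}$ and $\tfrac{1}{\lambda_{i}}\nabla_{a_{i}}\varphi_{i}$ is \emph{not} imposed; instead it differentiates the conditions (i) in $\lambda_{i}$ (and $a_{i}$) to obtain coupled bounds on $\lambda_{i}\partial_{\lambda_{i}}\alpha$, $\lambda_{i}\partial_{\lambda_{i}}\alpha_{j}$, and then expresses both $\langle\phi_{k,i},v\rangle_{L_{g_{0}}}$ and $\int u^{4/(n-2)}\phi_{k,i}v$ in terms of these derivatives. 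In particular the two constructions determine, in principle, \emph{different} parameters $(\alpha,\alpha_{i},a_{i},\lambda_{i})$; this is harmless here since the downstream applications (Proposition~\ref{prop_exp}) only use (i) and the size bound (ii), both of which your choice satisfies.

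What your route buys is conceptual transparency and the stronger conclusion that the $\langle\cdot,v\rangle_{L_{g_0}}$-terms in (ii) are exactly zero; what the paper's route buys is a direct bookkeeping identity (your relations (1)--(3)) that feeds the error of Lemma~\ref{lem_L_g_0_of_bubble} straight into the estimate without having to expand $u^{4/(n-2)}-(\alpha_{i}\varphi_{i})^{4/(n-2)}$ on the support of $\phi$. On that last point your sketch is a bit thin: for $n>6$ the exponent $4/(n-2)<1$ forces a H\"older-type (not Taylor-type) comparison, and one must separately treat the contribution of the constant $\alpha$ and of the far bubbles to reach the stated orders $O(\lambda_{i}^{-(n-2)})$ and $O(\varepsilon_{i,j}^{2})$ rather than only $O(\lambda_{i}^{-(n-2)/2}\Vert v\Vert)$ and $O(\varepsilon_{i,j}\Vert v\Vert)$; this works after a Young-inequality split, but deserves an explicit line. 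A second small point: you assert each $\bar\alpha_{i}$ is bounded below by reducing $p$ otherwise --- this is fine, but note that the lemma is stated for fixed $p$, so the reduction should be phrased as a preliminary normalisation of the witnessing data in $A_{u}(p,\varepsilon_{2})$ rather than as a change of $p$.
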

\begin{proof}
Following the proof of Lemma 4.6 in \cite{Mayer_Zhu_Negative_Yamabe_1} line by line for $\omega=1$, we obtain
\begin{flalign}
(1) \quad
& \lambda_{i}\partial_{\lambda_{i}}\alpha
=
o_{\varepsilon_2}(\textstyle\sum_{i,j}\vert \lambda_{i}\partial_{\lambda_{i}}\alpha_{j} \vert )
+
O(|\langle \lambda_{i}\partial_{\lambda_{i}}\varphi_{a_{i},\lambda_{i}},1
\rangle_{L_{g_{0}}}|) &\notag
\end{flalign}
\vspace{-18pt}
\begin{flalign}
(2) \quad
& \lambda_{i}\partial_{\lambda_{i}}\alpha_{j}
=
o_{\varepsilon_2}
(\vert \lambda_{i}\partial_{\lambda_{i}}\alpha  \vert
+
\textstyle\sum_{j\neq k=1}^{q}\vert \lambda_{i}\partial_{\lambda_{i}}\alpha_{k} \vert
)
+
O(\varepsilon_{i,j})
,\; i\neq j &\notag
\end{flalign}
\vspace{-18pt}
\begin{flalign}
(3) \quad
& \lambda_{i}\partial_{\lambda_{i}}\alpha_{i}
= \;
o_{\varepsilon_2}(\vert \lambda_{i}\partial_{\lambda_{i}}\alpha \vert
+
\textstyle\sum_{i\neq j =1}^{q}\vert \lambda_{i}\partial_{\lambda_{i}}\alpha_{j} \vert ) &\notag\\
& \quad\quad\quad\quad~~+
\langle v,\lambda_{i}\partial_{\lambda_{i}}\varphi_{a_{i},\lambda_{i}}
\rangle_{L_{g_{0}}}
-
\alpha_{i}\langle \varphi_{a_{i},\lambda_{i}},\lambda_{i}\partial_{\lambda_{i}}\varphi_{a_{i},\lambda_{i}}\rangle_{L_{g_{0}}}. &\notag
\end{flalign}
 Applying Lemmata \ref{lem_L_g_0_of_bubble} and \ref{lem_interactions}, we have
\begin{equation*}
\begin{split}
\sum_{i}
(
\vert \lambda_{i}\partial_{\lambda_{i}}\alpha \vert
& +
\sum_{j}\vert \lambda_{i}\partial_{\lambda_{i}}\alpha_{j} \vert
)\\
= \; &
O(\sum_i|\langle \lambda_{i}\partial_{\lambda_{i}}\varphi_{a_{i},\lambda_{i}},1
\rangle_{L_{g_{0}}}|) +
O(\sum_{i}
\vert
\langle \varphi_{a_{i},\lambda_{i}},\lambda_{i}\partial_{\lambda_{i}}\varphi_{a_{i},\lambda_{i}}\rangle_{L_{g_{0}}} \vert
) \\
&+ O(\sum_{i\neq j}\varepsilon_{i,j}) + O(\sum_i|\langle v,\lambda_{i}\partial_{\lambda_{i}}\varphi_{a_{i},\lambda_{i}}
\rangle_{L_{g_{0}}}|)\\
=\;& O(\sum_{i}\frac{1}{\lambda_{i}^{\frac{n-2}{2}}}+\sum_{i\neq j}\varepsilon_{i,j} + \Vert v \Vert) +\sum_io_{\frac1{\lambda_i}}(\frac{1}{\lambda_i^2}).
\end{split}
\end{equation*}
Arguing as for (39) in the proof of Lemma 4.6 in \cite{Mayer_Zhu_Negative_Yamabe_1}, there holds
\begin{equation*}
  \begin{split}
     \int u^{\frac{4}{n-2}}  v \lambda_{i}\partial_{\lambda_{i}}\varphi_{a_{i},\lambda_{i}}d\mu_{g_{0}}
= \; & O(\sum_{i}
(
\vert \lambda_{i}\partial_{\lambda_{i}}\alpha \vert
+
\sum_{j}\vert \lambda_{i}\partial_{\lambda_{i}}\alpha_{j} \vert
)\|v\|)\\
=\;& O(\sum_{i}\frac{1}{\lambda_{i}^{{n-2}}}+\sum_{i\neq j}\varepsilon^2_{i,j} + \Vert v \Vert^2) +\sum_io_{\frac1{\lambda_i}}(\frac{1}{\lambda_i^4})
  \end{split}
\end{equation*}
and, using Lemma \ref{lem_L_g_0_of_bubble},
\begin{equation*}
\begin{split}
\langle \lambda_{i}\partial_{\lambda_{i}}\varphi_{a_{i},\lambda_{i}},v\rangle_{L_{g_{0}}}
= \; &
4n(n-1)
\int \varphi_{a_{i},\lambda_{i}}^{\frac{4}{n-2}} \lambda_{i}\partial_{\lambda_{i}}\varphi_{a_{i},\lambda_{i}} v d\mu_{g_{0}} \\
& +
\int
(
\lambda_{i}\partial_{\lambda_{i}}L_{g_{0}}\varphi_{a_{i},\lambda_{i}}
-
4n(n-1)\lambda_{i}\partial_{\lambda_{i}}\varphi_{a_{i},\lambda_{i}}^{\frac{n+2}{n-2}}
)
v  d\mu_{g_{0}}\\
=\;& O(\sum_{i}\frac{1}{\lambda_{i}^{{n-2}}}+\sum_{i\neq j}\varepsilon^2_{i,j} + \Vert v \Vert^2) +\sum_io_{\frac1{\lambda_i}}(\frac{1}{\lambda_i^4}).
\end{split}
\end{equation*}
Hence the desired $\lambda$-term estimates. The $a$-terms are treated analogously.
\end{proof}

\section{Contractibility of the Exit Set}
\label{Section_Contractibility_of_the_Exit_Set}

Generally each flow line $u$ on $X$, generated by \eqref{flow_for_J},
may either converge to a solution of $\partial J=0$ or
blow up with a weak limit $0<u_{\infty} \in \R\cdot X$,
which again amounts to a solution of $\partial J=0$,
or tend to leave $X$ through $\{ k=0 \} $.

\

Indeed, recalling Section 2 in \cite{Mayer_Zhu_Negative_Yamabe_1}, the flow, generated by \eqref{flow_for_J},
decreases
$$J=\frac{-k}{(-r)^{\frac{n}{n-2}}},$$

whence $-r_{u}\longrightarrow 0$ necessitates $-k_{u}\longrightarrow 0$ for each flow line. Hence
$$-k_{u}\neq o(1)  \Longrightarrow -r_{u}\neq o(1),$$

while $\Vert u \Vert,-r_{u},-k_{u}$ are uniformly bounded from above on $X$.
Thus, unless
$$-k_{u_{t_{k}}}\longrightarrow 0
$$

for an increasing sequence in time, we may assume uniform bounds
$$0<c<-k_{u},-r_{u}<C<\infty,$$

whence, still according to Section 2 in \cite{Mayer_Zhu_Negative_Yamabe_1}, the corresponding flow line $u$
$$
\begin{matrix*}[l]
\quad  & (1)& \;\text{exists smoothly for all times in $X$}\;  \\
\quad & (2) & \;\text{decreases $J$, while $\inf_{ [0,\infty)}J(u)>0$}\;     \\
\quad & (3) & \; \text{remains uniformly positive, i.e. $\inf_{[0,\infty)\times M}u>0$}
\end{matrix*}
$$

and thus leads to a Palais-Smale sequences. Then Proposition 3.1 in \cite{Mayer_Zhu_Negative_Yamabe_1}

applies with a weak limit $u_{\infty}>0$,
corresponding to a solution to $\partial J=0$.

\

\noindent
So, to show solvability of $\partial J=0$,
we may argue by contradiction, assuming
\begin{equation*}
\; \forall \; u=u(\cdot,u_{0})
\; \text{ solving } \; \eqref{flow_for_J} \; \text{ and } \;
\gamma >0 \;
\; \exists \; t>0
\; : \;
0<-k_{u_{t}}<\gamma.
\end{equation*}
Then, once $0<-k_{u_{t}}\ll 1$ is sufficiently small, we shall change to a different energy decreasing flow,
which in finite time, say $t_{1}>0$, achieves $k_{u_{t_{1}}}=0$. This will naturally lead to the description of an associated exit set $E$ from $X$.

\begin{lemma}\label{transverse}
For every $L>0$ there exist $\gamma_{0},\delta_{0}>0$ such, that for every flow line $u$ on $\{ J \leq L \} $, generated by $\eqref{flow_for_J}$, and
$0<\gamma\leq \gamma_{0}$ we have
$$
u\in \{ -k=\gamma \}  \Longrightarrow \partial_{t}k_{u}>  \delta_{0}.
$$
In particular, if a flow line $u $ hits $\{ -k=\gamma \} $, then transversally.
\end{lemma}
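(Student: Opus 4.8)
The plan is to compute $\partial_t k_u$ along a flow line of \eqref{flow_for_J} and show it is bounded below by a positive constant once $-k_u$ is small. Since $k_u=\int Ku^{2^*}d\mu_{g_0}$ and along \eqref{flow_for_J} one has $\partial_t u = -(\tfrac{-k_u}{-r_u}R_u-K)u$, a direct differentiation gives
\[
\partial_t k_u
=
2^*\int K u^{\frac{n+2}{n-2}}\,\partial_t u\, d\mu_{g_0}
=
-2^*\int K u^{\frac{n+2}{n-2}}\Big(\tfrac{-k_u}{-r_u}u^{-\frac{4}{n-2}}L_{g_0}u-Ku^{\frac{n+2}{n-2}}\Big)u\,d\mu_{g_0},
\]
which after using $u^{2^*}d\mu_{g_0}$ notation rearranges to
\[
\partial_t k_u
=
2^*\int \Big(K^2u^{\frac{2(n+2)}{n-2}}-\tfrac{-k_u}{-r_u}\,K\,u^{\frac{n+2}{n-2}}L_{g_0}u\Big)d\mu_{g_0}.
\]
First I would observe that the second term is controlled: $\tfrac{-k_u}{-r_u}\to 0$ as $-k_u\to0$ (by the remarks preceding the lemma, since on $\{J\le L\}$ one has $-r_u\ge (-k_u/L)^{\frac{n-2}{n}}$, so $\tfrac{-k_u}{-r_u}\le L^{\frac{n-2}{n}}(-k_u)^{\frac{2}{n}}\to0$), while $\int K u^{\frac{n+2}{n-2}}L_{g_0}u\,d\mu_{g_0}$ is uniformly bounded on $X\cap\{J\le L\}$ by the uniform bounds $\|u\|\le C$, $\int u^{2^*}=1$ and elliptic regularity / interpolation (this needs the a priori positivity and boundedness of flow lines recalled above). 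Hence the second term is $o_{-k_u}(1)$ uniformly on $\{J\le L\}$.

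The heart of the matter is then to bound $\int K^2 u^{\frac{2(n+2)}{n-2}}d\mu_{g_0}=\int K^2 u^{2\cdot 2^*}d\mu_{g_0}$ away from zero, uniformly over all $u\in X\cap\{J\le L\}\cap\{-k_u\le\gamma\}$. The key point is that when $-k_u$ is small, the "positive mass" of $u$ (in the $L^{2^*}$ sense, weighted by $K$) cannot all be concentrated where $K=0$: if it were, then $\int u^{2^*}=1$ together with $\int K u^{2^*}=k_u\approx 0$ would force the $L^{2^*}$-mass of $u$ to accumulate on the boundary region $\{K\approx 0\}$, and I would argue this is incompatible with the uniform $H^1$ bound plus the positivity $\inf u>0$ — more precisely, $\inf_M u>0$ gives a definite lower bound $\int_{\{|K|\ge c\}}u^{2^*}\ge (\inf u)^{2^*}|\{|K|\ge c\}|>0$, and on that set $K^2\ge c^2$, so $\int K^2 u^{2\cdot 2^*}\ge c^2(\inf u)^{2\cdot 2^*}|\{|K|\ge c\}|$ which is a positive constant depending only on $L$ (since $\inf u$ is bounded below uniformly on $\{J\le L\}$ by item (3) above — or one can work with the measure $d\mu_{g_u}$ directly). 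This lower bound, combined with $\tfrac{-k_u}{-r_u}=o_{-k_u}(1)$, yields $\partial_t k_u \ge \delta_0>0$ for all $0<\gamma\le\gamma_0$ with $\gamma_0$ small enough.

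The main obstacle is making the uniform lower bound on $\inf_M u$ over $\{J\le L\}$ (or, equivalently, a uniform $L^{2\cdot 2^*}$-type nondegeneracy of $u$ away from $\{K=0\}$) genuinely uniform: the a priori estimates recalled from \cite{Mayer_Zhu_Negative_Yamabe_1} give $\inf u>0$ along each \emph{individual} flow line with uniform $k,r$ bounds, but here $u$ ranges over all of $X\cap\{J\le L\}\cap\{-k_u=\gamma\}$, not a single trajectory, so I would need to re-invoke the relevant a priori bound in the form "for $u\in X$ with $J(u)\le L$ and $-k_u\ge \gamma_0/2$ (say), $\inf_M u\ge c(L,\gamma_0)>0$", which follows from the same Harnack/Moser argument applied to $L_{g_0}u=Ku^{\frac{n+2}{n-2}}\cdot\tfrac{-r_u}{-k_u}$ — no wait, $u$ need not solve that; rather one should use the equation satisfied by $u$ as a point of $X$ together with the sublevel bound. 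Alternatively, and more robustly, I would replace the pointwise lower bound by the observation that on $X$ we always have $-r_u = c_n\|\nabla u\|^2 - \|u\|_{L^2}^2 < 0$, hence $\|u\|_{L^2}^2 > c_n\|\nabla u\|^2 \ge$ a fixed positive number (by $\|u\|_{L^{2^*}}=1$ and Sobolev), giving a uniform $L^2$ lower bound, and then leverage $\int Ku^{2^*}\approx 0$ with $\int K_-\,u^{2^*}\ge$ a fixed fraction of $\int u^{2^*}$ to extract $\int K^2 u^{2\cdot 2^*}\gtrsim 1$ via Hölder in reverse — this is where the real work lies, and it is the step I would spend the most care on. Once this uniform coercivity of $\int K^2 u^{2\cdot 2^*}d\mu_{g_0}$ is in hand, the transversality conclusion $\partial_t k_u>\delta_0$ and hence transversal crossing of $\{-k=\gamma\}$ is immediate.
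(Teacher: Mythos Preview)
Your overall strategy is right, but there are two concrete issues.

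First, a computational slip: along \eqref{flow_for_J} one has
\[
\partial_t k_u
=
2^*\int K u^{\frac{n+2}{n-2}}\partial_t u\,d\mu_{g_0}
=
\frac{2n}{n-2}\Big(\int K^2 u^{\frac{2n}{n-2}}d\mu_{g_0}
-\frac{-k_u}{-r_u}\int K\,L_{g_0}u\cdot u\,d\mu_{g_0}\Big),
\]
since $R_u u=u^{-\frac{4}{n-2}}L_{g_0}u$ and $\frac{n+2}{n-2}-\frac{4}{n-2}=1$. So the main term carries $u^{2^*}$, not $u^{2\cdot 2^*}$; likewise the cross term has $u\,L_{g_0}u$, which is uniformly bounded on $X$ by \eqref{equ_norm} directly, without any appeal to interior regularity along flow lines. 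Your control of the cross term via $\frac{-k_u}{-r_u}=J(u)^{\frac{n-2}{n}}(-k_u)^{\frac{2}{n}}\le L^{\frac{n-2}{n}}\gamma_0^{\frac{2}{n}}$ is exactly the paper's.

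Second, and more importantly, your argument for the uniform lower bound on $\int K^2 u^{2^*}$ is the real gap. Neither a pointwise lower bound $\inf_M u>0$ (which, as you rightly suspect, is \emph{not} uniform over $X$) nor a reverse H\"older manoeuvre will close it; in fact without hypothesis (H2) the bound is false, so any argument that does not invoke $\nu_1(L_{g_0},\{K\ge0\})>0$ cannot succeed. The paper's route is a compactness--contradiction: if $\int K^2 u_i^{2^*}\to 0$ along a sequence $(u_i)\subset X$, then by \eqref{equ_norm} one extracts $u_i\rightharpoonup u_\infty$ in $H^1$ and $u_i\to u_\infty\neq 0$ in $L^2$; lower semicontinuity gives $\int K^2 u_\infty^{2^*}=0$, hence $\operatorname{supp}u_\infty\subset\{K=0\}\subset\{K\ge0\}$, and then (H2) forces
\[
\nu_1\|u_\infty\|_{L^2}^2
\le \int L_{g_0}u_\infty\,u_\infty
\le \liminf_i r_{u_i}\le 0,
\]
contradicting $u_\infty\neq 0$. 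Your observation that $r_u<0$ yields a uniform $L^2$ lower bound is exactly what makes $u_\infty\neq 0$, so you were one step away; the missing ingredient is to feed $\operatorname{supp}u_\infty\subset\{K\ge 0\}$ into (H2).
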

\begin{proof}
We have along \eqref{flow_for_J}
\begin{equation}\label{time_evolution_of_k}
\begin{split}
\partial_t k_{u}
= \; &
\partial_t\int Ku^{\frac{2n}{n-2}} d\mu_{g_{0}}
=
\frac{2n}{n-2}\int Ku^{\frac{n+2}{n-2}}\partial_{t}u  d\mu_{g_{0}}\\
= \; &
-\frac{2n}{n-2}\int K(\frac{-k_{u}}{-r_{u}}R_{u}-K)u^{\frac{2n}{n-2}}d\mu_{g_{0}} \\
= \; &
\frac{2n}{n-2}(\int K^2u^{\frac{2n}{n-2}}d\mu_{g_{0}} - \frac{-k_{u}}{-r_{u}}\int KL_{g_0}uud\mu_{g_{0}}).
\end{split}
\end{equation}
Since we decrease $J$ along \eqref{flow_for_J}, there holds for $u\in \{ -k=\gamma \} \cap \{ J\leq L \} $
$$
\frac{-k_{u}}{-r_{u}}
=
J^{\frac{n-2}{n}}(u)(-k_{u})^{\frac{2}{n}}
\leq
L^{\frac{n-2}{n}} \gamma_0^{\frac{2}{n}},
$$
whence due to \eqref{equ_norm} we find
\begin{equation}\label{transversality_estimate_on_the_r_k_term}
\frac{-k_{u}}{-r_{u}}\int KL_{g_0}uu d\mu_{g_{0}}
\leq
CL^{\frac{n-2}{n}}\gamma_0^{\frac2n}.
\end{equation}
On the other hand we have
\begin{equation}\label{K_sqared_control}
\; \exists \;  \varepsilon_0>0 \; \forall \;  u\in X
\; : \; \int K^2u^{\frac{2n}{n-2}}d\mu_{g_{0}}
\geq
\varepsilon_0.
\end{equation}
Indeed and otherwise there exists a sequence
$(u_i)\subset X$ with
$$\int K^2u_i^{\frac{2n}{n-2}}d\mu_{g_{0}}\xlongrightarrow{i\to\infty}  0$$
and again due to
\eqref{equ_norm} we may assume
\begin{equation}\label{weaklim}
  u_i \xrightharpoondown{i\to \infty} u_\infty
\; \text{ weakly in } \;
H^{1}
\; \text{ and } \;
u_i\xlongrightarrow{i\to\infty} u_{\infty}\neq 0
\; \text{ in } \; L^{2}.
\end{equation}
By weak lower semicontinuity we then find
$$
\int K^2u_\infty^{\frac{2n}{n-2}}d\mu_{g_{0}}
\leq
\liminf_{i\to\infty}\int K^2u_i^{\frac{2n}{n-2}}d\mu_{g_{0}}
=
0,
$$
whence $supp(u_\infty) \subseteq \{K=0\}\subset\{K\geq0\}$.
Thus from $\nu_{1}=\nu_1(\{ K\geq 0 \} )>0$
\begin{equation*}
\begin{split}
\nu_{1}\Vert u_{\infty} \Vert_{L^{2}}^{2}
\leq \; &
\int_{\Omega_K}L_{g_0}u_\infty u_\infty d\mu_{g_{0}}
=
\int  L_{g_0}u_\infty u_\infty d\mu_{g_{0}} \\
\leq \; &
\liminf_{i\to\infty}\int  L_{g_0}u_iu_i d\mu_{g_{0}}
=
\liminf_{i\to\infty}r_{u_{i}}
\leq
0,
\end{split}
\end{equation*}
so $u_{\infty}=0$, in contradiction to \eqref{weaklim} and \eqref{K_sqared_control} is proved.
Inserting \eqref{transversality_estimate_on_the_r_k_term}
for sufficiently small $\gamma_{0}>0$
and  \eqref{K_sqared_control}  into
\eqref{time_evolution_of_k}, the assertion readily follows.
\end{proof}

Lemma \ref{transverse} will allow us to combine \eqref{flow_for_J} with moving along
\begin{equation}\label{flow_2}
\partial_t u  = (K-\bar{k})u
,\;
u(0,\cdot) = u_{0},
\end{equation}

where $\bar{k}=\bar{k}_{u}=\int Kd\mu_{g_u}/\int d\mu_{g_u}$. We first check
\begin{lemma}\label{lem_second_flow_properties}
Along a flow line $u$, generated by \eqref{flow_2}, we have
\begin{enumerate}[label=(\roman*)]
\item
conservation of the volume, i.e.
$\partial_{t}\int  u^{\frac{2n}{n-2}}d\mu_{g_0}=0$
\item
preservation of positivity, precisely
$$ \min_{M}u_{0} \cdot
e^{(\inf_{M} K)t}
\leq  u(t) \leq
\max_{M}u_{0} \cdot e^{(\sup_{M} K-\inf_{M} K)t}.$$
\end{enumerate}
Moreover for every $L>0$ there exist $\gamma_{0},\delta_{0}>0$ such, that for every flow line $u$
on $\{ -k \leq  \gamma_{0} \} \cap \{ J\leq L \}$, generated by \eqref{flow_2}, there holds
$$
\begin{matrix*}[l]
(1)  &  \partial_{t}J(u)<-\delta_{0}   \\
(2) &  \partial_{t}(-k_{u})<-\delta_{0} \\
(3) & 0 <-r_{u} \neq o(1).
\end{matrix*}
$$
In particular $u$ leaves
$\{ -k\leq \gamma_{0} \} \cap \{ J\leq L \}$
through $\{ k=0 \}$, hitting
$ \{ k=0 \}$ transversally with $r_{u}<0$ and $ J(u)=0$.
\end{lemma}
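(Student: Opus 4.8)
The plan is to build everything on one simple observation: along \eqref{flow_2} the $g_u$-volume is conserved, so if it starts at $1$ it stays $1$, and then $\bar k_u=\int K\,d\mu_{g_u}/\int d\mu_{g_u}=k_u$. For $(i)$, since $d\mu_{g_u}=u^{2^*}d\mu_{g_0}$, differentiating and inserting \eqref{flow_2} gives $\partial_t\int u^{2^*}d\mu_{g_0}=2^*\int(K-\bar k_u)u^{2^*}d\mu_{g_0}=2^*(k_u-\bar k_u\int d\mu_{g_u})=0$ by definition of $\bar k_u$; with $\Vert u_0\Vert_{L^{2^*}_{g_0}}=1$ this is $(i)$ and shows $\int d\mu_{g_u}\equiv1$, hence $\bar k_u=k_u$ all along. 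For $(ii)$, integrating \eqref{flow_2} with $x$ fixed yields $u(t,\cdot)=u_0\exp\big(\int_0^t(K-\bar k_{u_s})\,ds\big)$, so positivity is immediate; the upper bound follows from $\bar k_u\ge\inf_M K$ and the (sharper) lower bound from $\bar k_u\le0$, which holds while $k_u\le0$ (the range in which \eqref{flow_2} is used), because $\bar k_u$ is a $d\mu_{g_u}$-average of $K$ and has the sign of $\int K\,d\mu_{g_u}=k_u$.

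For the \textbf{moreover} part, fix $L>0$. First I would compute $\partial_t k_u$ along \eqref{flow_2} exactly as in \eqref{time_evolution_of_k}, obtaining
$$\partial_t k_u=2^*\int K(K-\bar k_u)u^{2^*}d\mu_{g_0}=2^*\Big(\int K^2d\mu_{g_u}-k_u^2\Big),$$
using $\bar k_u=k_u$ and $\int d\mu_{g_u}=1$. By \eqref{K_sqared_control} (whose proof only uses $r_u\le0$, $\Vert u\Vert_{L^{2^*}}=1$, $u>0$ and $\nu_1>0$, hence applies on $\{-k\le\gamma_0\}\cap\{J\le L\}$) one has $\int K^2d\mu_{g_u}\ge\varepsilon_0$, while $k_u^2\le\gamma_0^2$ on $\{-k\le\gamma_0\}$; choosing $\gamma_0^2\le\varepsilon_0/2$ gives $\partial_t(-k_u)<-2^*\varepsilon_0/2=:-\delta_0<0$, which is $(2)$ and shows the flow reaches $\{k=0\}$ within time $\le\gamma_0/\delta_0$.

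Next come the a priori bounds feeding $(1)$ and $(3)$. From $(ii)$ and the bounded exit time, $u$ stays bounded above and below in $L^\infty$ uniformly; with $r_u<0$, i.e. $c_n\Vert\nabla u\Vert^2\le\Vert u\Vert_{L^2}^2$, this yields $\Vert u\Vert_{H^1}\le C$, hence $0<-r_u=\Vert u\Vert_{L^2}^2-c_n\Vert\nabla u\Vert^2\le\Vert u\Vert_{L^2}^2\le C$, and, via $\partial_t u=(K-\bar k_u)u$, also $|\partial_t r_u|=2|\langle L_{g_0}u,\partial_t u\rangle|\le C$. For the lower bound on $-r_u$: at $t=0$ the constraint $J\le L$ gives $-r_{u_0}=(-k_{u_0}/J(u_0))^{\frac{n-2}{n}}\ge(-k_{u_0}/L)^{\frac{n-2}{n}}$, and since $\{k=0\}$ is reached within time $\le-k_{u_0}/\delta_0$ with $|\partial_t r_u|\le C$, $-r_u$ can drop by at most $C(-k_{u_0})/\delta_0$; because $\frac{n-2}{n}<1$, for $\gamma_0$ small this is $\le\tfrac12(-k_{u_0}/L)^{\frac{n-2}{n}}=:\beta_0$, so $-r_u\ge\beta_0>0$ throughout, i.e. $r_u<0$ is preserved and $-r_u$ is bounded away from $0$; this is $(3)$ (and if, as intended via Lemma \ref{transverse}, \eqref{flow_2} is started on $\{-k=\gamma_0\}$, the uniform bound $-r_u\ge\tfrac12(\gamma_0/L)^{\frac{n-2}{n}}$ holds). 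Finally, for $(1)$, differentiating $J=-k_u/(-r_u)^{\frac n{n-2}}$,
$$\partial_t J=(-r_u)^{-\frac n{n-2}}\Big(\partial_t(-k_u)-\tfrac n{n-2}\tfrac{-k_u}{-r_u}\,\partial_t(-r_u)\Big),$$
the bracket is $\le-\delta_0+\tfrac n{n-2}\tfrac{\gamma_0}{\beta_0}C<-\delta_0/2$ for $\gamma_0$ small (note $\gamma_0/\beta_0=O(\gamma_0^{2/n})$), and $(-r_u)^{-\frac n{n-2}}$ is bounded below since $-r_u\le C$, so $\partial_t J<-\delta_0'<0$. Combining: $-k_u$ strictly decreases, so the flow cannot leave through $\{-k=\gamma_0\}$; by $(3)$ it cannot leave through $\{r=0\}$; hence it leaves through $\{k=0\}$, transversally since $\partial_t(-k_u)<-\delta_0<0$ there, with $r_u<0$ and $J=-k_u/(-r_u)^{\frac n{n-2}}=0$.

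The main obstacle is $(3)$: preserving $r_u<0$ and ruling out $-r_u\to0$. This is the one place where three quantitative facts must be balanced — the small initial margin $-r_{u_0}\ge(-k_{u_0}/L)^{\frac{n-2}{n}}$, the short exit time $\le-k_{u_0}/\delta_0$, and the uniform bound $|\partial_t r_u|\le C$ — and the balance works precisely because the exponent $\frac{n-2}{n}$ is strictly less than $1$. Once $(3)$ is in place, $(1)$ and $(2)$ are routine consequences of the volume-normalization identity $\bar k_u=k_u$ together with $(i)$, $(ii)$, \eqref{K_sqared_control} and the $L^\infty/H^1$ bounds.
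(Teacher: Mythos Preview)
Your proof is correct and rests on the same ingredients as the paper's, but you organize the ``moreover'' part differently. The paper proves $(1)$, $(2)$, $(3)$ in that order: for $(1)$ it expands $\partial_t J$ via \eqref{partial_J} and controls the cross term $\frac{-k_u}{-r_u}\int KL_{g_0}u\,u\,d\mu_{g_0}$ directly through the algebraic identity $\frac{-k_u}{-r_u}=J^{(n-2)/n}(-k_u)^{2/n}\le L^{(n-2)/n}\gamma_0^{2/n}$, which requires no lower bound on $-r_u$; $(2)$ is as you have it; and $(3)$ is argued by contradiction, assuming $-r_u\to0$ at the exit time $t_1$ and comparing $|t_1-t|^{n/(n-2)}\gtrsim -k_u/J\ge\frac{\delta_0}{L}|t_1-t|$. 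You instead establish $(2)$, then prove $(3)$ \emph{directly} via the explicit lower bound $-r_u\ge\tfrac12(-k_{u_0}/L)^{(n-2)/n}$, and finally derive $(1)$ from $(2)$ and $(3)$. Both routes ultimately hinge on the exponent mismatch $(n-2)/n<1$; your direct argument for $(3)$ makes the quantitative content more visible, while the paper's ordering is slicker because $(1)$ never needs $-r_u$ bounded from below.

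One point to tighten in your step $(1)$: you bound $\frac{-k_u}{-r_u}\le\frac{\gamma_0}{\beta_0}$ and assert $\gamma_0/\beta_0=O(\gamma_0^{2/n})$, but your $\beta_0=\tfrac12(-k_{u_0}/L)^{(n-2)/n}$ depends on the individual flow line and degenerates as $-k_{u_0}\to0$, so $\gamma_0/\beta_0$ is \emph{not} uniformly small over $\{-k\le\gamma_0\}\cap\{J\le L\}$. The repair is immediate: since $-k_u$ is decreasing along the flow, use $-k_u\le-k_{u_0}$ to get $\frac{-k_u}{-r_u}\le\frac{-k_{u_0}}{\beta_0}=2L^{(n-2)/n}(-k_{u_0})^{2/n}\le2L^{(n-2)/n}\gamma_0^{2/n}$; or simply bypass $\beta_0$ via $\frac{-k_u}{-r_u}=J^{(n-2)/n}(-k_u)^{2/n}$, which is exactly the paper's shortcut.
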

\begin{proof}
(i) and (ii) are evident. Moreover from \eqref{partial_J} and \eqref{flow_2} we have
\begin{equation*}
\begin{split}
\partial_{t} J(u)
= \; &
\frac{2^{*}}{(-r_{u})^{\frac{n}{n-2}}}
\int(\frac{-k_{u}}{-r_{u}}\, L_{g_0}u-Ku^{\frac{n+2}{n-2}})(K-\bar{k}_{u})ud\mu_{g_{0}} \\
\leq & \;
- \frac{2^{*}}{(-r_{u})^{\frac{n}{n-2}}}
(\int K^{2}u^{\frac{2n}{n-2}}d\mu_{g_{0}} + O( \vert \frac{-k_{u}}{-r_{u}}\vert)),
\end{split}
\end{equation*}
using $\bar{k}_{u}=k_{u}$ due to $\int d\mu_{g_u}=\Vert u \Vert_{L^{\frac{2n}{n-2}}}^{\frac{2n}{n-2}}=1$.
Thus for  $u\in \{ -k \leq \gamma_{0} \} \cap \{ J\leq L \}  $
$$
\frac{-k_{u}}{-r_{u}}=J^{\frac{n-2}{n}}(u)(-k_{u})^{\frac{2}{n}} \leq L^{\frac{n-2}{n}}\gamma_{0}^{\frac{2}{n}}
$$
and we obtain
\begin{equation}\label{gra_fun}
\begin{split}
\partial_{t} J(u)
\leq & \;
- \frac{2^{*}}{(-r_{u})^{\frac{n}{n-2}}}
(\int K^{2}u^{\frac{2n}{n-2}}d\mu_{g_{0}} + O( L^{\frac{n-2}{n}}\gamma_{0}^{\frac{2}{n}})).
\end{split}
\end{equation}
Since  $0<-r_{u}$ is uniformly bounded from above on $X$
due to \eqref{equ_norm}, assertion (1) follows from \eqref{K_sqared_control}.
Furthermore and still on $\{ -k \leq \gamma_{0}\} \cap \{ J\leq L \}$
\begin{equation}\label{gra_k}
\begin{split}
\partial_{t} k_{u}
= \; &
\frac{2n}{n-2}\int Ku^{\frac{n+2}{n-2}}\partial_{t}ud\mu_{g_0}
=
\frac{2n}{n-2}\int Ku^{\frac{n+2}{n-2}}(K-\bar{k})u d\mu_{g_0}\\
\geq  \; &
\frac{2n}{n-2}\int Ku^{\frac{2n}{n-2}}d\mu_{g_{0}} + O(\gamma_{0}^2),
\end{split}
\end{equation}
whence also (2) is evident from \eqref{K_sqared_control}.
As a consequence of (1) and (2)
a flow line $u$ on $\{ -k\leq \gamma_{0} \} \cap \{ J \leq L \}$
has to leave that set
by hitting $\{ k=0 \}$ transversally at some time, say at $t=t_{1}$,
and necessarily $-k,-r>0$ during $[0,t_{1})$.
We may therefore assume, arguing by contradiction and contrarily to (3),
that
$$-k_{u},-r_{u}\xlongrightarrow{t \to t_{1}} 0.$$
Readily $\vert \partial_{t}r_{u} \vert \leq C $ by \eqref{equ_norm}, whence
$
\vert r_{u} \vert
\leq
C\vert t_{1}-t \vert
$
for $0\leq t \leq t_{1}$
and therefore
$$
\vert t_{1}-t \vert^{\frac{n}{n-2}}
\geq
c (- r_{u} )^{\frac{n}{n-2}}
=
c\frac{-k_{u}}{J(u)}.
$$
On the other hand from $\partial_{t}k_{u}>\delta_{0}$ we infer
$$
-k_{u}=\int^{t_{1}}_{t}\partial_{s}k_{u(s)}ds\geq \delta_{0}\vert t_{1}-t \vert,
$$
whence due to $u \in \{ J\leq L \} $ we conclude
\begin{equation}\label{finite_time_contradiction}
\vert t_{1}-t \vert^{\frac{n}{n-2}}
\geq
c\frac{-k_{u}}{J(u)}
\geq
\frac{c\delta_0}{L}\vert t_{1}-t \vert,
\end{equation}
a contradiction for $t_1\geq t\longrightarrow  t_1$.
\end{proof}

From Lemma \ref{lem_second_flow_properties} naturally
$$
E=\{ k=0 \} \cap \{ r<0 \} \cap \{ u>0 \} \cap \{ \Vert u \Vert_{L^{\frac{2n}{n-2}}}=1 \}  \subset C^{\infty}(M)
$$

comes into play, which we call the exit set, as justified by

\begin{proposition}\label{prop_exit_set_retraction}
If $\{ \partial J=0 \}= \emptyset$,
then for every $L>0$ the augmented sublevel
$$\{ J\leq L \} \cup E$$
retracts by strong deformation onto $E$ and
$$\{ J\leq L \} \cup E \simeq \{ J\leq L \} $$
are homotopically equivalent.
\end{proposition}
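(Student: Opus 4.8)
The plan is to build a single energy decreasing deformation that pushes all of $\{J\le L\}$ into $E$ in finite time, by concatenating the flow \eqref{flow_for_J} with the flow \eqref{flow_2}. Fix $L>0$ and take $\gamma_0,\delta_0>0$ as in Lemmata \ref{transverse} and \ref{lem_second_flow_properties}. First I would observe that the proof of Lemma \ref{transverse} actually gives more: since $\frac{-k_u}{-r_u}=J^{\frac{n-2}{n}}(u)(-k_u)^{\frac2n}\le L^{\frac{n-2}{n}}\gamma_0^{\frac2n}$ holds on all of $\{-k\le\gamma_0\}\cap\{J\le L\}$, estimate \eqref{transversality_estimate_on_the_r_k_term} together with \eqref{K_sqared_control} yields $\partial_t k_u>\delta_0'$ along \eqref{flow_for_J} on the whole region $\{-k\le\gamma_0\}\cap\{J\le L\}$ for some $\delta_0'>0$, not just on a single level set; the same holds along \eqref{flow_2} by Lemma \ref{lem_second_flow_properties}(2). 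Picking a smooth cut-off $\chi=\chi(-k_u)\in[0,1]$ with $\chi\equiv1$ for $-k\ge\gamma_0$ and $\chi\equiv0$ for $-k\le\gamma_0/2$, let $\Phi_t$ be the flow of
$$
\partial_t u=\chi(-k_u)\,V_1(u)+\bigl(1-\chi(-k_u)\bigr)\,V_2(u),
$$
where $V_1,V_2$ denote the right hand sides of \eqref{flow_for_J} and \eqref{flow_2}. Since $\partial_t J$, $\partial_t k_u$ and $\partial_t\Vert u\Vert_{L^{2n/(n-2)}}^{2n/(n-2)}$ are linear in the velocity, the combined flow inherits conservation of volume (so it remains on the constraint $\Vert u\Vert_{L^{2n/(n-2)}}=1$), the monotonicity $\partial_t J\le0$ on $\{J\le L\}$, and $\partial_t(-k_u)<-\min(\delta_0,\delta_0')$ on $\{-k\le\gamma_0\}\cap\{J\le L\}$.

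Next I would use the hypothesis $\{\partial J=0\}=\emptyset$ to see that every $\Phi$-line starting in $\{J\le L\}$ reaches $E$ in finite time. While $-k>\gamma_0$ the combined flow coincides with \eqref{flow_for_J}; by the dichotomy recalled before Lemma \ref{transverse}, if such a flow line had $\liminf_t(-k_{u_t})>0$ it would enjoy uniform bounds $0<c<-k_u,-r_u<C$, hence generate a Palais--Smale sequence and, via Proposition 3.1 of \cite{Mayer_Zhu_Negative_Yamabe_1}, a solution of $\partial J=0$ — which is excluded. Thus the flow line enters $\{-k\le\gamma_0\}\cap\{J\le L\}$ at some finite time, stays there afterwards (as $-k$ is strictly decreasing on that region), and then by the argument of \eqref{finite_time_contradiction} in Lemma \ref{lem_second_flow_properties} hits $\{k=0\}$ transversally at a finite time $T(u)$ with $r<0$ and $J=0$, so that $\Phi_{T(u)}(u)\in E$. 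Transversality of the crossings of $\{-k=\gamma_0\}$ and of $\{k=0\}$, together with continuous dependence of the flow on initial data, makes $u\mapsto T(u)$ continuous on $\{J\le L\}$; moreover for $u$ near $E$ one is on the pure flow \eqref{flow_2}, where $T(u)<(-k_u)/\delta_0\to0$ as $-k_u\to0$.

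With this in hand I would define $H\colon[0,1]\times(\{J\le L\}\cup E)\to\{J\le L\}\cup E$ by $H(s,u)=\Phi_{sT(u)}(u)$ for $u\in\{J\le L\}$ and $H(s,u)=u$ for $u\in E$. By construction $H$ is well defined (the trajectory stays in $\{J\le L\}$ for $t<T(u)$ and lands in $E$ at $t=T(u)$), satisfies $H(0,\cdot)=\mathrm{id}$, $H(1,\cdot)\subset E$ and $H(s,\cdot)\vert_E=\mathrm{id}_E$, and is continuous — the only delicate point being continuity at points of $E$, which follows from $T(u)\to0$ there and joint continuity of $\Phi_t$ in $(t,u)$. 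Hence $H$ is a strong deformation retraction of $\{J\le L\}\cup E$ onto $E$. For the homotopy equivalence $\{J\le L\}\cup E\simeq\{J\le L\}$ I would argue by a collar: running \eqref{flow_2} backwards from $E$ yields an embedding $\beta\colon\{(u_0,t):u_0\in E,\ 0\le t\le\tau(u_0)\}\hookrightarrow\{J\le L\}\cup E$ with $\beta(\cdot,0)=\mathrm{id}_E$ and $\beta(u_0,t)\in X\cap\{J\le L\}$ for $t>0$ (with $\tau>0$ continuous, possibly variable if $E$ is noncompact), whose image $N$ is a closed collar neighbourhood of $E$ with outer face $\Sigma=\{\beta(u_0,\tau(u_0))\}$; then $\{J\le L\}\cup E$ deformation retracts onto $(\{J\le L\}\cup E)\setminus\beta(N^\circ)$ by collapsing the closed collar onto $\Sigma$, and $\{J\le L\}$ deformation retracts onto the same set by collapsing the open collar $\beta(N^\circ)\cap X$ onto $\Sigma$, so the two spaces are homotopy equivalent.

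The step I expect to be the main obstacle is turning the concatenation of \eqref{flow_for_J} and \eqref{flow_2} into one well-behaved flow: one must check that across the transition region $\{-k\in[\gamma_0/2,\gamma_0]\}$ both $J$ and $-k$ still decrease (the strengthened form of Lemma \ref{transverse} above is designed precisely for this), that positivity and the uniform Sobolev bounds survive along the combined flow up to the exit time, and above all that the exit time $T(u)$ depends continuously on $u$ up to and on the boundary stratum $E$, so that $H$ is genuinely continuous there and the retraction is strong rather than merely a homotopy.
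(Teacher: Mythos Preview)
Your argument is correct and follows the same two--step strategy as the paper: first push $\{J\le L\}$ into $E$ by concatenating the Yamabe--type flow \eqref{flow_for_J} with the auxiliary flow \eqref{flow_2}, then use the backward flow \eqref{flow_inver} to exhibit the homotopy equivalence. The technical implementation differs in two places. For the retraction onto $E$, the paper makes a hard switch from \eqref{flow_for_J} to \eqref{flow_2} at the level $\{-k=\gamma_0\}$, relying on the transversality of Lemma \ref{transverse} to make the switching time continuous; you instead interpolate with a smooth cut--off $\chi(-k_u)$, which is arguably cleaner but forces you to verify the strengthened monotonicity $\partial_t k_u>\delta_0'$ along \eqref{flow_for_J} on the whole strip $\{-k\le\gamma_0\}\cap\{J\le L\}$ --- exactly what you do, and it is indeed immediate from \eqref{time_evolution_of_k}--\eqref{K_sqared_control}. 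For the homotopy equivalence, the paper replaces your abstract collar by an explicit neighbourhood $S_\varepsilon=(\{J<L/2\}\cap\{\tfrac{-k}{-r}<\varepsilon\}\cap\{-k<\varepsilon\})\cup E$ and shows, via the quantitative estimates $\partial_t(-k),\partial_t(\tfrac{-k}{-r}),\partial_t J>\delta$ along \eqref{flow_inver}, that the backward flow exits $S_\varepsilon$ transversally through a set $T_\varepsilon\subset\{J\le L\}$, so that both $\{J\le L\}$ and $\{J\le L\}\cup E$ deformation retract onto the common set $A_\varepsilon=(\{J\le L\}\setminus S_\varepsilon)\cup T_\varepsilon$. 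This explicit choice handles in one stroke the issue your collar needs to address, namely that $-r$ is not a priori bounded away from $0$ on $E$: the constraint $\tfrac{-k}{-r}<\varepsilon$ guarantees the backward trajectory hits $T_\varepsilon$ before $-r$ can vanish. Your collar construction is correct once you pick $\tau(u_0)$ small enough to respect such a bound, but the paper's formulation makes this uniform and transparent.
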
	
\begin{proof}
First note, that due to $\{ \partial J=0 \}= \emptyset$ every flow line $u$,
generated by \eqref{flow_for_J} and starting at $u_{0}\in\{ J\leq L \} $,
has to tend to leave  $\{ J \leq L\} $,
which necessitates $0<-k_{u}\longrightarrow 0$.
This allows us to combine the movements along
\eqref{flow_for_J} and \eqref{flow_2} to a flow on $D=\{ J\leq L \} \cup E$
by choosing
$\gamma_{0},\delta_{0}>0$ such, that the conclusions of
Lemmata \ref{transverse} and \ref{lem_second_flow_properties}
hold true, and
\begin{enumerate}[label=(\roman*)]
\item[(a)] for $u_{0} \in D_{1}=\{ -k > \gamma_{0} \}\cap \{ J\leq L \} $
flow along \eqref{flow_for_J} and stay in $D_{1}$
until we hit $\{ -k=\gamma_{0} \} $ transversally,
which, as noted above, has to happen
\item[(b)] for $u_{0}\in D_{2}=\{ 0< -k \leq \gamma_{0} \} \cap \{ J\leq L \}$
flow along \eqref{flow_2} and stay in $D_{2}$ until we hit
$E=\{ k=0 \}\cap \{ r<0 \}  $ transversally
\item[(c)] 	for $u_{0}\in D_{3}=E$ we do not move.
\end{enumerate}
Readily this gives rise to a homotopy
$$
H_{1}\; : \; (\{ J\leq L \} \cup E) \times [0,1]\longrightarrow \{ J\leq L \} \cup E
$$
inducing a strong deformation retract
$\{ J\leq L \} \cup E \xhookrightarrow{\;sdr\:} E$ and we note, that
$$
H_{1}(\{ J\leq L \},1 ) = E.
$$
On the other hand we consider for $0<\varepsilon \ll 1$
$$
S_{\varepsilon}
=
\left(
\{ J<\frac{L}{2}\}
\cap
\{ \frac{-k}{-r}<\varepsilon \}
\cap
\{-k < \varepsilon \}
\right)
\cup
E
\subset
\{ J\leq L \} \cup E
$$
and, given $u_{0}\in S_{\varepsilon}$ as an initial datum,
solve inversely to \eqref{flow_2}
\begin{equation}\label{flow_inver}
\partial_t\tilde{u} = -(K-\bar{k}_{\tilde{u}})\tilde{u},\;
\tilde{u}(0,\cdot) = u_{0}.
\end{equation}
Using again \eqref{K_sqared_control},
we find constants $\delta,D>0$ such, that, as long as $\tilde u \in S_{\varepsilon}$,
$$
\begin{matrix*}[l]
(1) & |\partial_{t}r_{\tilde u}|<D \\
(2) & \partial_{t} (-k_{\tilde u})>\delta \\
(3) & \partial_{t}\frac{-k_{\tilde u}}{-r_{\tilde u}}>\delta \\
(4) & \partial_{t}J(\tilde u)>\delta,
\end{matrix*}
$$
cf. \eqref{equ_norm}, \eqref{gra_fun}, \eqref{gra_k}.
So $\tilde u$ will in finite time leave $S_{\varepsilon}$ transversally through
$$
T_{\varepsilon}
=
\{ J=\frac{L}{2}\}
\cup
\left(\{\frac{-k}{-r}=\varepsilon\}
\cup
\{-k = \varepsilon \}
\right)
\cap
\{J\leq L\}
\subset
\{J\leq L\},
$$
since by (1), (2), (3), if $0>r_{\tilde u}\longrightarrow 0$, then $\tilde{u}$ must
hit $\{ \frac{-k}{-r} =\varepsilon\}$ transversally, before reaching $r_{\tilde u}=0$.
Letting thus
$$
A_\varepsilon=(\{ J\leq L \}  \setminus S_{\varepsilon})\cup T_{\varepsilon}\subset\{J\leq L\}
$$
and for $u_{0}\in \{J\leq L \} \cup E$
$$0\leq t_{u_{0}}=\inf\{ t>0\; : \; \tilde u(t,\cdot) \in A_\varepsilon\}<\infty,$$
which by transversality depends continuously on $u_{0}$, we find, that under
$$
H_{2}:(\{ J\leq L \}\cup E )\times[0,1]\longrightarrow (\{ J\leq L \}\cup E )
: (u_{0},\tau)\longrightarrow \tilde u(\tau t_{u_{0}},\cdot)
$$
both $\{ J\leq L \} $ and $\{ J\leq L \} \cup E$
retract by strong deformation onto $A_{\varepsilon}$.
\end{proof}	
In particular $E\neq \emptyset$, if $\{ \partial J=0 \}=\emptyset$.
On the other hand $E=\emptyset$ has a much stronger impact
and relates closely to the the situation studied in \cite{Mayer_Zhu_Negative_Yamabe_1}.
\begin{corollary}\label{cor_exit_set_versus_A_B_inequality}
The assertions
\begin{enumerate}[label=(\roman*)]
\item a global A-B-inequality holds
\item on every sublevel an A-B-inequality holds
\item on some sublevel an A-B-inequality holds
\item $J$ admits a global minimizer on $X$
\item the exit set $E$ is empty
\end{enumerate}
are related by
$$
(i) \Longrightarrow (ii)  \Longleftrightarrow (iii) \Longleftrightarrow (iv) \Longleftrightarrow (v).
$$
\end{corollary}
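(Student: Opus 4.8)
\medskip\noindent\textbf{Proof strategy.}
The plan is to run the cycle
$(i)\Rightarrow(ii)\Rightarrow(iii)\Rightarrow(iv)\Rightarrow(v)\Rightarrow(ii)$,
which makes $(ii)$--$(v)$ equivalent and leaves $(i)$ as the a priori stronger statement. The first two implications are immediate: a global A-B-inequality restricts verbatim to each sublevel $\{J\le L\}$, giving $(i)\Rightarrow(ii)$; and since $X\neq\emptyset$ (as $Y(M)<0$ and $K$ is sign changing), hence $\{J\le L\}\neq\emptyset$ for $L$ large, the step $(ii)\Rightarrow(iii)$ is trivial. The implication $(iii)\Rightarrow(iv)$ is exactly Theorem \ref{thm_minimize_J}.

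For $(iv)\Rightarrow(v)$ I would argue by contraposition, showing that $E\neq\emptyset$ forces $\inf_XJ=0$, which by $J>0$ on $X$, cf. \eqref{J}, is not attained, so $J$ has no global minimizer. Pick $u_*\in E$, i.e. $u_*>0$, $\Vert u_*\Vert_{L_{g_0}^{\frac{2n}{n-2}}}=1$, $r_{u_*}<0$, $k_{u_*}=0$, and a nonnegative $\phi\not\equiv0$ supported in the nonempty open set $\{K<0\}$, and set $\hat u_t=(u_*+t\phi)\big/\Vert u_*+t\phi\Vert_{L_{g_0}^{\frac{2n}{n-2}}}$. For small $t>0$ one has $\hat u_t>0$, $r_{u_*+t\phi}=r_{u_*}+O(t)<0$ and
$$
k_{u_*+t\phi}=t\,\tfrac{2n}{n-2}\int Ku_*^{\frac{n+2}{n-2}}\phi\,d\mu_{g_0}+O(t^2)<0,
$$
the leading coefficient being strictly negative since $K<0$ on $\operatorname{supp}\phi$; hence $\hat u_t\in X$, and $J(\hat u_t)=\frac{-k_{\hat u_t}}{(-r_{\hat u_t})^{n/(n-2)}}\to 0$ as $t\to0^+$, since $-k_{\hat u_t}\to0$ while $-r_{\hat u_t}\to -r_{u_*}>0$.

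The substance is $(v)\Rightarrow(ii)$, which I would split into a soft reduction and a compactness core. Using $\Vert u\Vert\simeq\Vert u\Vert_{L^2}\simeq1$ on $\{J\le L\}\subseteq X$, cf. \eqref{equ_norm}: an A-B-inequality on $\{J\le L\}$ forces $B(-k_u)^{\frac{n-2}{n}}\ge\Vert u\Vert^2+A(-r_u)\ge c>0$, i.e. $\inf_{\{J\le L\}}(-k)>0$; conversely, if $-k_u\ge\delta>0$ on $\{J\le L\}$, then $J\le L$ yields $-r_u\ge(\delta/L)^{\frac{n-2}{n}}$, while $-r_u\le\sup_X(-r)<\infty$, and one reads off admissible constants $A,B$. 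So an A-B-inequality holds on $\{J\le L\}$ if and only if $\inf_{\{J\le L\}}(-k)>0$, and it suffices to show, assuming $E=\emptyset$, that no sublevel contains a sequence $u_i$ with $0<-k_{u_i}\to0$. If it did, then $u_i\in X$ with $\Vert u_i\Vert\simeq1$, so along a subsequence $u_i\rightharpoonup u_\infty$ in $H^1$ and $u_i\to u_\infty$ in $L^2$ with $u_\infty\ge0$, $\Vert u_\infty\Vert_{L^2}\simeq1$, in particular $u_\infty\not\equiv0$. I would then combine the Brezis--Lieb splitting of $\Vert\cdot\Vert_{L^{\frac{2n}{n-2}}}$ and of $k$, the energy budget $-r_{u_i}=\int u_i^2-c_n\int|\nabla u_i|^2>0$ that caps the concentrated Dirichlet energy, and the quantitative positivity \eqref{K_sqared_control} with $\nu_1(\{K\ge0\})>0$ --- used exactly as in Lemma \ref{transverse} to rule out $\operatorname{supp}u_\infty\subseteq\{K\ge0\}$ and concentration over $\{K\ge0\}$ --- so that the compactness analysis behind Proposition 3.1 in \cite{Mayer_Zhu_Negative_Yamabe_1} should deliver, after normalization and a maximum-principle step, a positive limit with $r<0$ and $k=0$, i.e. an element of $E$, contradicting $E=\emptyset$. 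The delicate part, which I expect to require the most work, is precisely this concentration bookkeeping: ruling out that the sequence escapes towards the corner $\{r=0\}\cap\{k=0\}$ or bubbles off near $\{K\le0\}$ without leaving a genuine element of $E$ behind. Everything else is routine.
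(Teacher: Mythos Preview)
Your cycle and the first four steps are fine and match the paper; your perturbation argument for $(iv)\Rightarrow(v)$ is a clean alternative to the paper's use of the inverse flow \eqref{flow_inver}, and either works.

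For $(v)\Rightarrow(ii)$ you correctly reduce to showing $\inf_{\{J\le L\}}(-k)>0$, but then propose a weak-limit/concentration argument that you yourself flag as incomplete. The paper avoids all of this. Given any $u_0\in\{J\le L\}$ with $0<-k_{u_0}\le\gamma_0$, it simply runs the second flow $\partial_t u=(K-\bar k)u$ of \eqref{flow_2}; by Lemma \ref{lem_second_flow_properties} this flow preserves positivity and volume, stays in $\{J\le L\}$, satisfies $\partial_t(-k)<-\delta_0$ and keeps $-r$ bounded away from $0$, so in finite time it hits $\{k=0\}$ with $r<0$, i.e.\ produces an element of $E$. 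Contraposition gives $-k>\gamma_0$ on $\{J\le L\}$ whenever $E=\emptyset$. This is constructive and sidesteps exactly the ``delicate part'' you were worried about: no Brezis--Lieb, no bubbling bookkeeping, no maximum-principle step to upgrade $u_\infty\ge0$ to $u_\infty>0$, and no risk of escaping to the corner $\{r=0\}\cap\{k=0\}$, since Lemma \ref{lem_second_flow_properties}(3) already controls $r$ along the flow. Your compactness route may be salvageable, but it is both harder and unfinished; the flow argument is the missing idea.
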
	
\begin{proof}
$(i) \Longrightarrow (ii)  \Longrightarrow (iii)$ are clear and for
$(iii) \Longrightarrow (iv)$ see \cite{Mayer_Zhu_Negative_Yamabe_1}.
Moreover, if $(iv)$ holds, then $\inf_{X} J>0$ by definition, while $\inf_{X} J=0$ via \eqref{flow_inver}, if $E \neq \emptyset$. Thus
$
(i) \Longrightarrow (ii)  \Longrightarrow (iii) \Longrightarrow (iv) \Longrightarrow (v)
$
and we are left with proving
$
(v)  \Longrightarrow  (ii).
$
From Lemma \ref{lem_second_flow_properties} we infer
$$
E=\emptyset
 \Longrightarrow
\; \forall \; L>0 \; \exists \; \gamma>\gamma_0
\; : \;
-k>\gamma \; \text{ on } \; \{ J\leq L \},
$$
which implies the validity of an A-B-inequality on $\{ J\leq L \} $.
\end{proof}	
We describe sublevels as strong deformation retracts of $X$.
\begin{lemma}\label{lem_sublevel_retracts}
If $\{ \partial J=0\}=\emptyset$, then $X$ retracts for every $L>0$ onto $\{ J\leq L \} $
by strong deformation.
\end{lemma}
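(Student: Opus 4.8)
The plan is to combine the two flows already studied, exactly as in the proof of Proposition \ref{prop_exit_set_retraction}, but now running things ``inside'' $X$ rather than on $\{J\le L\}\cup E$. Fix $L>0$. Under the hypothesis $\{\partial J=0\}=\emptyset$, any flow line $u$ of \eqref{flow_for_J} starting from $u_0\in X$ cannot converge (in the sense that would produce a solution), nor can it blow up with positive weak limit, since both alternatives yield a critical point of $J$ by the Palais--Smale analysis recalled at the start of the section (Proposition 3.1 in \cite{Mayer_Zhu_Negative_Yamabe_1}). Hence necessarily $0<-k_{u_t}\longrightarrow 0$ along every such trajectory, and by Lemma \ref{transverse} the set $\{-k=\gamma\}$ is hit transversally for $0<\gamma\le\gamma_0$. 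The point is that \eqref{flow_for_J} decreases $J$, so as soon as $u_0\in\{J\le L\}$ it stays there, while for $u_0\in X\setminus\{J\le L\}$ the trajectory must eventually enter $\{J\le L\}$: otherwise $J(u_t)$ would stay bounded below by $L$, forcing $0<c\le -k_{u_t},-r_{u_t}\le C$ for all time, contradicting $-k_{u_t}\to 0$.

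The retraction is then built in the obvious way. For $u_0\in X$ with $J(u_0)>L$, flow along \eqref{flow_for_J}; since $J$ strictly decreases and the trajectory exists smoothly in $X$ (properties (1)--(3) recalled before Lemma \ref{transverse}, valid as long as $-k$ stays bounded away from $0$), it reaches $\{J=L\}$ at a finite time $t_{u_0}>0$, and one stops there. For $u_0\in\{J\le L\}$ one does not move. The essential structural fact making this a \emph{continuous} construction is that $\{J=L\}$ is a regular level: along \eqref{flow_for_J} we have $\partial_t J(u)<0$ strictly --- this is where $\{\partial J=0\}=\emptyset$ is used again, via the standard deformation lemma type estimate showing $\|\partial J(u)\|$ is bounded below on $\{J=L\}$ (no critical points, plus the absence of critical points at infinity noted in the introduction) --- so that $t_{u_0}$ depends continuously on $u_0$ and vanishes precisely on $\{J\le L\}$. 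Setting
\begin{equation*}
H:X\times[0,1]\longrightarrow X,\qquad H(u_0,\tau)=u\big(\tau\, t_{u_0},u_0\big),
\end{equation*}
with the convention $t_{u_0}=0$ and $H(u_0,\tau)=u_0$ for $u_0\in\{J\le L\}$, gives $H(\cdot,0)=\mathrm{id}_X$, $H(\cdot,1)$ mapping into $\{J\le L\}$, and $H(u_0,\tau)=u_0$ for all $\tau$ whenever $u_0\in\{J\le L\}$. Hence $\{J\le L\}\hookrightarrow X$ is a strong deformation retract.

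The main obstacle, as in Proposition \ref{prop_exit_set_retraction}, is verifying that the stopping time $t_{u_0}$ is finite and continuous, which hinges on uniform transversality at the level $\{J=L\}$. For finiteness one needs a uniform lower bound $\partial_t(-J(u_t))\ge\delta_0>0$ on $\{J\ge L\}\cap X$ away from the region where $-k$ is already small; in that small-$-k$ region one invokes instead Lemma \ref{transverse} and Lemma \ref{lem_second_flow_properties}, switching to \eqref{flow_2} to keep decreasing $J$ all the way down. One must be slightly careful that a trajectory starting with $J(u_0)>L$ could in principle first wander into $\{-k\le\gamma_0\}$ while still above level $L$; but Lemma \ref{lem_second_flow_properties}(1) guarantees $\partial_t J<-\delta_0$ there as well, so $J$ decreases monotonically and crosses $L$ in finite time regardless of which flow is active. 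Continuity of $t_{u_0}$ then follows from transversality of the crossing together with the continuous dependence of solutions of \eqref{flow_for_J} and \eqref{flow_2} on initial data. Since all the quantitative inputs --- the bounds \eqref{K_sqared_control}, \eqref{gra_fun}, \eqref{gra_k}, the transversality of Lemma \ref{transverse}, and the flow properties of Lemma \ref{lem_second_flow_properties} --- are already in hand, the argument is a routine, if careful, gluing of the two flows.
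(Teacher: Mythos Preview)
Your argument has a genuine gap. The claim in your first paragraph --- that if the trajectory stays in $\{J>L\}$ then $-k_{u_t},-r_{u_t}$ are bounded below --- is simply false: nothing prevents $-k_u$ and $-r_u$ from tending to zero simultaneously while $J=(-k)/(-r)^{n/(n-2)}$ remains above $L$. The Palais--Smale analysis you invoke says only that \emph{if} $-k,-r$ stay bounded below \emph{then} a critical point arises; it does not give the converse. Your attempted patch in the last paragraph, switching to the flow \eqref{flow_2} and citing Lemma \ref{lem_second_flow_properties}(1), misapplies that lemma: it is stated and proved on $\{-k\le\gamma_0\}\cap\{J\le L\}$, not on $\{J>L\}$ where you need it. Adapting it to $\{J\le J(u_0)\}$ makes the thresholds $\gamma_0,\delta_0$ depend on $u_0$, so the switching scheme is no longer uniform, and continuity of the combined flow in $u_0$ is then not the ``routine'' matter you suggest.

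The paper's proof is different in structure and uses only the flow \eqref{flow_for_J}, never \eqref{flow_2}. Arguing by contradiction, one assumes the flow line stays in $\{J>L\}$ and shows first that $-k_u\to 0 \Leftrightarrow -r_u\to 0 \Leftrightarrow (-k_u)/(-r_u)\to 0$, so the trajectory enters the region $S_\varepsilon=\{0<-k,-r,-k/(-r)\le\varepsilon\}$. On $S_\varepsilon$ one derives, along \eqref{flow_for_J} itself, the estimates $\partial_t(-k)\le -\delta$, $\partial_t(-r)\ge -D$ and $\partial_t(-k/(-r))<-\delta$. A separate energetic argument --- exits from $S_{2\varepsilon}$ can occur only by increasing $-r$, which strictly lowers $J$ --- shows the trajectory eventually remains in $S_{2\varepsilon}$, after which the finite-time contradiction of type \eqref{finite_time_contradiction} applies. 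This joint-degeneration analysis is the substantive step missing from your sketch.
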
	
\begin{proof}
For $L>0$ and $u_{0} \in \{ J>L \}$
consider the flow line $u$,
generated by \eqref{flow_for_J} and
starting at $u_{0}$. Then $u$ hits $\{ J=L \} $ transversally
or, along a sequence in time,
$-k_{u}\longrightarrow 0$
or
$-r_{u}\longrightarrow 0$.
We claim, that necessarily $u$ hits $\{ J=L \} $ transversally
and thus have to exclude
$-k_{u}\longrightarrow 0$ or $-r_{u}\longrightarrow 0$ on $\{J>L\}$,
which, arguing by contradiction, we assume.
Then $-k_{u}\longrightarrow 0
 \Longrightarrow -r_{u}\longrightarrow 0$, as $J(u)> L$,
and due to $J(u)\leq J(u_{0})$
$$
-r_{u}\longrightarrow 0
\Longrightarrow
\frac{-k_{u}}{-r_{u}}\longrightarrow 0
 \Longrightarrow -k_{u}\longrightarrow 0,
$$
whence
\begin{equation}\label{equivalence_of_vanishing}
-k_{u}\longrightarrow 0
\Longleftrightarrow
-r_{u} \longrightarrow 0
\Longleftrightarrow
\frac{-k_{u}}{-r_{u}} \longrightarrow 0.
\end{equation}
We may therefore assume, that for every $\varepsilon>0$ after some time
$$
u \in S_{\varepsilon}=\{ 0<-r,-k,\frac{-k}{-r}\leq \varepsilon \}
\subset X.
$$
Let us also denote
$$t_{1}
=
\inf\{t> 0 \; : \;
-k_{u_{t_{n}}}\longrightarrow 0 \; \text{ for some } \;
0<t_{n}\nearrow t
\} \in (0,\infty].$$
Using \eqref{K_sqared_control} we then find, that on $S_{\varepsilon}$
along \eqref{flow_for_J}
$$
\begin{matrix*}[l]
(1) & \partial_{t} (-k_u) \leq -\delta \\	
(2) & \partial_{t}(-r_u) \geq -D \\
(3) &\partial_{t} \frac{-k_u}{-r_u}<-\delta
\end{matrix*}
$$
for some $\delta,D>0$.
We claim, that $u\in S_{2 \varepsilon}$ eventually, i.e.
$$
\; \exists \; 0\leq t_{0} < t_{1}
\; \forall \; t_{0}\leq t < t_{1}
\; : \;
u\in S_{2 \varepsilon}.
$$
In fact, as long as
$u\in S_{2 \varepsilon}$, by (1) and (3) necessarily
$$-k_u,\frac{-k_u}{-r_u}<\varepsilon,$$
whence due to \eqref{equivalence_of_vanishing}
the only possibility to leave $S_{2\varepsilon}$,
before reaching $\{k=0\}$,
is by \textit{increasing} $-r$ from $\varepsilon$ to $2\varepsilon$,
which due to (3) and
$$
J(u)=\frac{-k_{u}}{-r_{u}}\frac{1}{(-r_{u})^{\frac{2}{n-2}}}
$$
comes at an energetic cost.
Thus $u$ can travel at most finitely many times
from $S_{\epsilon}$ to $S_{2\epsilon}^{c}$ and the claim follows.
So we may assume, that (1), (2) and (3) hold during $[t_{0},t_{1})$,
whence (i) implies $t_{1}<\infty$.
Arguing as for \eqref{finite_time_contradiction},
we then reach the desired contradiction
and conclude, that every flow line $u$,
starting at $u_{0}$ with $J(u_{0})>L$,
hits $\{ J=L \} $ transversally at some time $t_{u_{0}}$,
depending continuously on $u_{0}$ by transversality.
Letting
$t_{u_{0}}=0 \; \text{ for } \; u_{0}\in \{ J\leq L \}$,
we find the desired strong deformation retract as
$$
H:X\times [0,1] \longrightarrow X:
( u_{0},\tau)\longrightarrow  u(\tau t_{u_{0}},\cdot).
\qedhere
$$
\end{proof}

\begin{lemma}\label{lem_homotopy_on_X}
The map
$
H:X \times [0,1] \longrightarrow X:(u,\tau)\longrightarrow
u_{\tau}
=
\frac{w_{\tau}}{\Vert w_{\tau} \Vert_{L^{\frac{2n}{n-2}}}}
$
with
$$
w_{\tau}=(\tau + (1-\tau)u^{\frac{2n}{n-2}})^{\frac{n-2}{2n}}
$$
defines a null homotopy.
\end{lemma}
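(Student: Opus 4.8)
The plan is to verify that $H$ is well defined and continuous, that $H(\cdot,0)=\mathrm{id}_X$ and $H(\cdot,1)$ is a constant map, and that $H$ takes values in $X$; granting this, $H$ is a homotopy from $\mathrm{id}_X$ to a constant, i.e. a null homotopy, and in particular $X$ is contractible. Continuity I would treat as routine: since $u>0$ and $\tau\in[0,1]$, the density $w_\tau^{\frac{2n}{n-2}}=\tau+(1-\tau)u^{\frac{2n}{n-2}}$ stays bounded away from $0$ on compacta, so $(u,\tau)\mapsto w_\tau$ and hence $(u,\tau)\mapsto u_\tau$ depend continuously on their arguments in the relevant topology. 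At $\tau=0$ one has $w_0=u$ and $\Vert w_0\Vert_{L^{\frac{2n}{n-2}}}=1$, so $H(u,0)=u$; at $\tau=1$ one has $w_1\equiv1$, so $H(u,1)\equiv V^{-\frac{n-2}{2n}}=:u_*$ with $V=\mu_{g_0}(M)$, independent of $u$.

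Thus everything reduces to showing $u_\tau\in X$ for all $u\in X$ and $\tau\in[0,1]$. Positivity $u_\tau>0$ and the normalisation $\Vert u_\tau\Vert_{L^{\frac{2n}{n-2}}}=1$ are built in, the latter because $\int w_\tau^{\frac{2n}{n-2}}d\mu_{g_0}=\tau V+(1-\tau)$ thanks to $\Vert u\Vert_{L^{\frac{2n}{n-2}}}=1$. For the constraint $k_{u_\tau}<0$ I would exploit that $k$ is affine in the density:
\[
k_{u_\tau}=\bigl(\tau V+(1-\tau)\bigr)^{-1}\Bigl(\tau\!\int_M\! K\,d\mu_{g_0}+(1-\tau)k_u\Bigr),
\]
a ratio whose numerator is affine in $\tau$ and negative at both endpoints: $k_u<0$ since $u\in X$, and $\int_M K\,d\mu_{g_0}<0$ (a necessary‑type condition; e.g. $\int_M(-K)\,d\mu_{g_0}=\int_M\bar w\,d\mu_{g_0}>0$ follows by integrating $\mathcal{L}_{g_0}\bar w=-K$ over $M$ once $\bar w>0$). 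Hence $k_{u_\tau}<0$ for every $\tau$.

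The heart of the matter is $r_{u_\tau}<0$. Since $r_{cu}=c^2r_u$ for $c>0$, it suffices to prove $r_{w_\tau}<0$; the case $\tau=1$ is trivial ($r_{w_1}=-V<0$), so fix $\tau\in[0,1)$ and set
\[
g=g_{\tau,u}=\frac{(1-\tau)u^{\frac{2n}{n-2}}}{\tau+(1-\tau)u^{\frac{2n}{n-2}}}\in(0,1).
\]
A direct computation of $\nabla w_\tau$ and $w_\tau^2$ (write the bracket in $w_\tau$ as $(1-\tau)u^{\frac{2n}{n-2}}/g$ and simplify exponents) yields the clean identities
\[
|\nabla w_\tau|^2=(1-\tau)^{\frac{n-2}{n}}g^{\frac{n+2}{n}}|\nabla u|^2,\qquad w_\tau^2=(1-\tau)^{\frac{n-2}{n}}g^{-\frac{n-2}{n}}u^2,
\]
whence
\[
r_{w_\tau}=(1-\tau)^{\frac{n-2}{n}}\Bigl(c_n\!\int g^{\frac{n+2}{n}}|\nabla u|^2d\mu_{g_0}-\int g^{-\frac{n-2}{n}}u^2d\mu_{g_0}\Bigr).
\]
Since $0<g<1$ pointwise we have $g^{\frac{n+2}{n}}\le1$ and $g^{-\frac{n-2}{n}}\ge1$, so the bracket is strictly below $c_n\int|\nabla u|^2d\mu_{g_0}-\int u^2 d\mu_{g_0}=r_u<0$; therefore $r_{w_\tau}<0$. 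Combined with the previous paragraph this places $u_\tau$ in $X$ and finishes the argument.

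I expect the constraint $r_{u_\tau}<0$ to be the one genuine difficulty. The tempting route — to show $\tau\mapsto r_{w_\tau}$ is convex, hence negative on $[0,1]$ because it is negative at the endpoints — runs into trouble, since the Dirichlet part $\tau\mapsto\int|\nabla w_\tau|^2d\mu_{g_0}$ need not be convex along this interpolation (its pointwise second derivative in $\tau$ changes sign where $u$ is large, the exponent $\frac{2n}{n-2}$ being $>2$). The change of variables to $g$ above sidesteps convexity entirely and is, I believe, the crux of the proof; the only other point to watch is the sign condition $\int_M K\,d\mu_{g_0}<0$, which is indispensable — without it $u_*$ would not lie in $X$ and the stated null homotopy could not exist.
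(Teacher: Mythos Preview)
Your proof is correct and follows essentially the same route as the paper. Both arguments compute $|\nabla w_\tau|^2$ and $w_\tau^2$ directly and use the pointwise bound equivalent to your $0<g\le 1$ to reach $r_{w_\tau}\le (1-\tau)^{\frac{n-2}{n}}r_u<0$; your variable $g$ is just a convenient name for the ratio the paper writes out in full, and the treatment of $k_{w_\tau}$ via affine interpolation is identical (the paper simply writes $k_{w_\tau}=\tau k_1+(1-\tau)k_u<0$ without further comment, whereas you additionally flag the need for $\int_M K\,d\mu_{g_0}<0$).
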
	
\begin{proof}
In view of \eqref{definition_of_X} we have to verify
$r_{w_{\tau}},k_{w_{\tau}}<0$. Clearly
\begin{equation*}
k_{w_{\tau}}
=
\tau k_{1} + (1-\tau)k_{u}
<0
\end{equation*}
and $r_{w_{\tau}}<0$ for $\tau \in \{ 0,1 \}$, while from
\begin{equation*}
\begin{split}
r_{w_{\tau}}
= \; &
\int c_n\vert \nabla (\tau + (1-\tau)u^{\frac{2n}{n-2}})^{\frac{n-2}{2n}} \vert^{2}
-
\vert (\tau + (1-\tau)u^{\frac{2n}{n-2}})^{\frac{n-2}{2n}} \vert^{2}d\mu_{g_{0}} \\
= \;&
(1-\tau)^{2} c_n\int \vert \nabla u \vert^{2}\vert \frac{u^{\frac{2n}{n-2}}}{\tau + (1-\tau)u^{\frac{2n}{n-2}}} \vert^{\frac{n+2}{n}}
d\mu_{g_{0}} \\
& -
\int \vert \tau + (1-\tau)u^{\frac{2n}{n-2}} \vert^{\frac{n-2}{n}}d\mu_{g_{0}}
\end{split}
\end{equation*}
we find, that
\begin{equation*}
r_{w_{\tau}}
\leq
(1-\tau)^{\frac{n-2}{n}}c_n\int \vert \nabla u \vert^{2}d\mu_{g_{0}}
-
(1-\tau)^{\frac{n-2}{n}}\int u^2d\mu_{g_{0}}
=
(1-\tau)^{\frac{n-2}{n}}r_{u},
\end{equation*}
whence due to $r_{u}<0$ also $r_{w_{\tau}}<0$ for $\tau \in (0,1)$.
The claim follows.
\end{proof}
We can finally state and prove the main result of this section.
\begin{corollary}\label{cor_E_contractible}
If $\{ \partial J=0 \}= \emptyset$, then $E$ is contractible.
\end{corollary}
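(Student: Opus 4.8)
The plan is to deduce contractibility of $E$ from a chain of homotopy equivalences
\[
E\;\simeq\;\{J\leq L\}\cup E\;\simeq\;\{J\leq L\}\;\simeq\;X\;\simeq\;\{\text{a point}\},
\]
valid for any fixed $L>0$, each link of which has already been established in the preceding results.

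First, since we assume $\{\partial J=0\}=\emptyset$, Proposition~\ref{prop_exit_set_retraction} applies and gives at once both a strong deformation retraction of $\{J\leq L\}\cup E$ onto $E$ — so $E\simeq\{J\leq L\}\cup E$ — and the homotopy equivalence $\{J\leq L\}\cup E\simeq\{J\leq L\}$; hence $E\simeq\{J\leq L\}$. Next, again because $\{\partial J=0\}=\emptyset$, Lemma~\ref{lem_sublevel_retracts} provides a strong deformation retraction of $X$ onto the sublevel $\{J\leq L\}$, so $\{J\leq L\}\simeq X$ and therefore $E\simeq X$.

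It then remains to observe that $X$ is contractible, which is precisely the content of Lemma~\ref{lem_homotopy_on_X}: the map $H(u,\tau)=w_\tau/\Vert w_\tau\Vert_{L^{\frac{2n}{n-2}}}$ with $w_\tau=(\tau+(1-\tau)u^{\frac{2n}{n-2}})^{\frac{n-2}{2n}}$ deforms $\mathrm{id}_X$ (attained at $\tau=0$, where $w_0=u$ and $\Vert u\Vert_{L^{\frac{2n}{n-2}}}=1$ on $X$) to the constant map onto the normalized constant function (attained at $\tau=1$, where $w_1\equiv1$). Since contractibility is a homotopy invariant, $E\simeq X$ then forces $E$ to be contractible as well; note that $E\neq\emptyset$ here, as remarked right after Proposition~\ref{prop_exit_set_retraction}, so the conclusion is not vacuous.

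There is no real obstacle at this stage: all the analytic substance — the finite-time, transversal-exit estimates for the flows \eqref{flow_for_J} and \eqref{flow_2} that underlie the two strong deformation retracts, and the sign bounds $r_{w_\tau},k_{w_\tau}<0$ along the null homotopy — has already been carried out in Proposition~\ref{prop_exit_set_retraction}, Lemma~\ref{lem_sublevel_retracts} and Lemma~\ref{lem_homotopy_on_X}. The only points to keep straight are the standard facts that a strong deformation retract is in particular a homotopy equivalence, that homotopy equivalence is transitive, and that a space homotopy equivalent to a contractible one is itself contractible.
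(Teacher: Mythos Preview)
Your proof is correct and follows essentially the same approach as the paper: combine the contractibility of $X$ from Lemma~\ref{lem_homotopy_on_X}, the retraction of $X$ onto $\{J\leq L\}$ from Lemma~\ref{lem_sublevel_retracts}, and the homotopy equivalence $\{J\leq L\}\cup E\simeq\{J\leq L\}$ together with the retraction onto $E$ from Proposition~\ref{prop_exit_set_retraction}. The paper's argument is the same chain read in the opposite order, starting from the contractibility of $X$ and pushing it down to $E$.
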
	
\begin{proof}
$X$ is  contractible by Lemma \ref{lem_homotopy_on_X}
and so are sublevels $\{ J\leq L \} $
by virtue of Lemma \ref{lem_sublevel_retracts}.
Proposition \ref{prop_exit_set_retraction} then shows, that
$\{ J\leq L \} \,\cup\, E$ is contractible as well and
retracts by deformation onto $E$.
\end{proof}

\begin{proof}[\textbf{Proof of Theorem \ref{thm_exit_set_example}, part (i) and (ii)}]
For (i) see Corollary \ref{cor_exit_set_versus_A_B_inequality} and for (ii) confer
Corollary \ref{cor_E_contractible} and its proof.
\end{proof}
For part (iii) of Theorem \ref{thm_exit_set_example}, see Proposition \ref{prop_disconnectedness_of_E_for_K_dp}.

\section{A Non Connected  Exit Set}\label{sec_non_connectedness}

As discussed in \cite{Mayer_Zhu_Negative_Yamabe_1}, $J$ does in absence of solutions to
$\partial J=0$ not exhibit critical points at infinity and hence
the latter cannot be used to prove existence of the former.
On the other hand and in view of Corollary \ref{cor_E_contractible}
a Morse theoretical study of $E$ looks promising
to reach a contradiction to the contractibility of $E$ and thereby showing
solvability of $\partial J=0$ - for instance by studying the energy
$$\mathcal{E}: E\longrightarrow\mathbb{R}:\ u \longrightarrow  -r_u.$$
\indent
Here we limit ourselves to the construction of a specific function $K=K_{dp}$ with non contractible exit set $E$.
To this end let
\begin{equation*}
\bar \theta_{a,\lambda}=\frac{1}{1+\lambda^{2}\gamma_{n} G_{a}^{\frac{2}{2-n}}}
\; \text{ on } \;
\{ G_{a}>0 \}
\; \text{ for } \;
\lambda>0
\end{equation*}
and
$
\bar\varphi_{a, \lambda }
=
\eta_{a}\bar\theta_{a,\lambda}
$
with $\eta_{a}$ as in \eqref{cut_off_function}, cf. \eqref{Bubble_Definition}.
In what follows, consider for
$$
\bar{\lambda}_1,\bar{\lambda}_{2} \gg 1
\; \text{ and } \;
\bar{a}_1,\bar{a}_{2}\in M
\; \text{ with } \;
\text{dist}(\bar{a}_1, \bar{a}_2)>4\epsilon
$$
the double peak type function
\begin{equation}\label{K_in_the_exit_set_example}
K=K_{dp}=-\bar{\alpha}+\bar{\varphi}_1+\bar{\varphi}_2,\; \;
0\leq \bar{\varphi}_i={\eta}_{\bar{a}_i}\bar\theta_{\bar{a}_i,\bar{\lambda}_i} \leq 1.
\end{equation}
We will show, that
for $\bar{\lambda}_1$, $\bar{\lambda}_2$ sufficiently large
and
\begin{equation*}
 \bar{\alpha}=\frac{c_{1}}
{(\sfrac{4n(n-1)c_{1}}{\vert M \vert^{\frac{2}{n}} })^{\frac{n}{n-2}}+c_{1}}
\end{equation*}
there are at least two distinct connected components
$E_{1},\; E_{2}\subset U(1, \varepsilon)$ of $E$,
and in particular $E$ is not contractible, cf. Definition \ref{V_p_e} and \eqref{bar_alpha}.

\begin{proposition}\label{prop_exp}
For $K=K_{dp}$ and
$$u=\alpha+\alpha_1\varphi_{1}+v\in U(1, \varepsilon)\cap\{d(a_1, \bar{a}_1)<\varepsilon\}$$
up to some
$$
o_
{
\bar{\lambda}_{1}d(\bar{a}_{1},a_{1})+\sfrac{1}{\lambda_{1}}+\Vert v \Vert
+
\sum_{i}\sfrac{1}{\bar{\lambda}_{i}}
}
(
\bar{\lambda}_{1}^{2}d^{2}(\bar{a}_{1},a_{1})+\lambda_{1}^{\frac{2-n}{2}}+\frac{\bar{\lambda}_{1}^{2}}{\lambda_{1}^{2}}+\Vert v \Vert^{2}
)
$$
we have
\begin{flalign}
(i) \quad
1=\; &\|u\|_{L^{\frac{2n}{n-2}}}^{\frac{2n}{n-2}} &\notag\\
= \; &
|M|\alpha^{\frac{2n}{n-2}}+c_1\alpha_1^{\frac{2n}{n-2}} + \frac{2n}{n-2}\alpha^{\frac{n+2}{n-2}}\alpha_1\int \varphi_{1}d\mu_{g_0} & \label{norm} \\
& + \frac{2n}{n-2}b_0\frac{ \alpha\alpha_1^{\frac{n+2}{n-2}}}{\lambda_1^{\frac{n-2}{2}}}
+
\frac{n(n+2)}{(n-2)^2}\int (\alpha^{\frac{4}{n-2}}+\alpha_1^{\frac{4}{n-2}}\varphi_{1}^{\frac{4}{n-2}}) v^2d\mu_{g_0}
& \notag
\end{flalign}
\begin{flalign}
(ii) \quad
r_u
= \;&
-|M|\alpha^2 + 4n(n-1)c_1\alpha_1^2 - 2\alpha\alpha_1\int \varphi_{1}d\mu_{g_0}  + \int L_{g_0}vvd\mu_{g_0}
\label{exp_r} &
\end{flalign}
\begin{flalign}
(iii) ~~
k_u
= &
-\bar{\alpha}
+
O^+(\bar{\lambda}_i^{-2})
+
c_1\alpha_1^{\frac{2n}{n-2}}
-
c_1\alpha_1^{\frac{2n}{n-2}}\bar{\lambda}_1^2d^2(\bar{a}_1, a_1) \label{exp_k}
\\
& -
c_4\alpha_1^{\frac{2n}{n-2}} \frac{\bar{\lambda}_1^2}{\lambda_1^2}
+
\frac{2n}{n-2}b_0\frac{ \alpha\alpha_1^{\frac{n+2}{n-2}}}{\lambda_1^{\frac{n-2}{2}}}
+
\frac{n(n+2)}{(n-2)^2}\alpha_1^{\frac{4}{n-2}}\int v^2\varphi_{1}^{\frac{4}{n-2}}d\mu_{g_0},
   & \notag
\end{flalign}
where
$ c_4=\int_{\mathbb{R}^n}\frac{|x|^2}{(1+|x|^2)^n}dx$ and $c_1, b_0>0$ are as in Lemma \ref{lem_interactions}.
\end{proposition}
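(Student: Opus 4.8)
The plan is to expand each of the three quantities $\|u\|_{L^{2n/(n-2)}}^{2n/(n-2)}$, $r_u$ and $k_u$ for $u=\alpha+\alpha_1\varphi_1+v$ by Taylor-expanding the relevant integrands and then invoking the selfaction and interaction estimates from Lemma \ref{lem_interactions} together with $L_{g_0}\varphi_1$ from Lemma \ref{lem_L_g_0_of_bubble}, always keeping track of errors against the target precision $\bar\lambda_1^2 d^2(\bar a_1,a_1)+\lambda_1^{(2-n)/2}+\bar\lambda_1^2\lambda_1^{-2}+\|v\|^2$. For part $(i)$, I would write $u^{2n/(n-2)}=(\alpha+\alpha_1\varphi_1+v)^{2n/(n-2)}$ and Taylor expand: the leading terms $\alpha^{2n/(n-2)}$ and $\alpha_1^{2n/(n-2)}\varphi_1^{2n/(n-2)}$ integrate to $|M|\alpha^{2n/(n-2)}$ and $c_1\alpha_1^{2n/(n-2)}$ by Lemma \ref{lem_interactions}(ii) (with the $\lambda_1^{-(n-2)}$ error absorbed); the first-order cross term $\frac{2n}{n-2}\alpha^{(n+2)/(n-2)}\alpha_1\int\varphi_1$ is kept explicitly; the mixed term $\alpha\alpha_1^{(n+2)/(n-2)}\int\varphi_1^{(n+2)/(n-2)}$ produces $\frac{2n}{n-2}b_0\alpha\alpha_1^{(n+2)/(n-2)}\lambda_1^{-(n-2)/2}$ using the selfaction integral $\int\varphi_1^{(n+2)/(n-2)}d\mu_{g_0}=b_0\lambda_1^{-(n-2)/2}+o(\lambda_1^{-(n-2)/2})$; the second-order term in $v$ gives $\frac{n(n+2)}{(n-2)^2}\int(\alpha^{4/(n-2)}+\alpha_1^{4/(n-2)}\varphi_1^{4/(n-2)})v^2$, where I must check the cross second-order piece $\int\alpha^{(something)}\varphi_1^{(something)}v^2$ and the third-order-and-higher $v$ terms are of order $\|v\|^3=o_{\|v\|}(\|v\|^2)$; the term linear in $v$ with no $\varphi_1$ vanishes because $v\perp_{L_{g_0}}1$ only controls the $L_{g_0}$ pairing, so instead one uses that $\int\alpha^{(n+2)/(n-2)}v\,d\mu_{g_0}$ is controlled — here one should be careful and this is where the orthogonality of Lemma \ref{lem_optimal_choice1}(i) enters together with the smallness estimates in \ref{lem_optimal_choice1}(ii).

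For part $(ii)$ I would use $r_u=\int L_{g_0}u\,u\,d\mu_{g_0}$ and bilinearity: $r_u=\int L_{g_0}\alpha\,\alpha+2\int L_{g_0}\alpha\,\alpha_1\varphi_1+\alpha_1^2\int L_{g_0}\varphi_1\,\varphi_1+2\int L_{g_0}(\alpha+\alpha_1\varphi_1)v+\int L_{g_0}vv$. Since $L_{g_0}=-c_n\Delta_{g_0}-1$ and $\alpha$ is constant, $L_{g_0}\alpha=-\alpha$, giving $-|M|\alpha^2$ and the cross term $-2\alpha\alpha_1\int\varphi_1$. For $\alpha_1^2\int L_{g_0}\varphi_1\varphi_1$ I invoke Lemma \ref{lem_L_g_0_of_bubble}: $L_{g_0}\varphi_1=4n(n-1)\varphi_1^{(n+2)/(n-2)}+O(\lambda_1^{-(n-2)/2}\chi_{B_{2\epsilon}\setminus B_\epsilon})+o_{1/\lambda_1}(\lambda_1^{-2}+\lambda_1^{-(n-2)/2})$, so $\int L_{g_0}\varphi_1\varphi_1=4n(n-1)\int\varphi_1^{2n/(n-2)}+\text{errors}=4n(n-1)c_1+o(1)$ terms within target. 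The cross terms with $v$ vanish exactly by the orthogonality $v\perp_{L_{g_0}}\{1,\varphi_1\}$ from Lemma \ref{lem_optimal_choice1}(i) — this is the clean part. What remains is $\int L_{g_0}vv\,d\mu_{g_0}$, kept as is.

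For part $(iii)$, $k_u=\int K u^{2n/(n-2)}d\mu_{g_0}$ with $K=-\bar\alpha+\bar\varphi_1+\bar\varphi_2$. The $-\bar\alpha$ piece against $\|u\|^{2n/(n-2)}=1$ gives exactly $-\bar\alpha$. The $\bar\varphi_2$ piece: since $\text{dist}(\bar a_1,\bar a_2)>4\epsilon$ and $d(a_1,\bar a_1)<\varepsilon$, the bubble $\varphi_1$ is supported far from $\bar a_2$, so $\int\bar\varphi_2 u^{2n/(n-2)}$ is dominated by the constant-part contribution $\alpha^{2n/(n-2)}\int\bar\varphi_2$, which is $O^+(\bar\lambda_2^{-2})$ since $\int_M\bar\varphi_2 d\mu_{g_0}=O(\bar\lambda_2^{-2})$ (this is the integral of $\bar\theta$, decaying like $\lambda^{-2}$ rather than $\lambda^{-(n-2)/2}$ because $\bar\varphi$ has exponent $1$ not $(n-2)/2$); the cross contributions are lower order. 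The main term comes from $\int\bar\varphi_1 u^{2n/(n-2)}$: here $u^{2n/(n-2)}$ is well approximated near $a_1$ by $\alpha_1^{2n/(n-2)}\varphi_1^{2n/(n-2)}$ plus the cross and $v$ terms as in part $(i)$, and one expands $\bar\varphi_1=\bar\theta_{\bar a_1,\bar\lambda_1}$ near $a_1$. The crucial computation is $\int\bar\theta_{\bar a_1,\bar\lambda_1}\varphi_1^{2n/(n-2)}d\mu_{g_0}$: writing $\bar\theta_{\bar a_1,\bar\lambda_1}=(1+\bar\lambda_1^2\gamma_n G_{\bar a_1}^{2/(2-n)})^{-1}$ and Taylor expanding in $\bar\lambda_1^2\gamma_n G^{2/(2-n)}\approx\bar\lambda_1^2 d^2(\bar a_1,\cdot)$, one gets $1-\bar\lambda_1^2 d^2(\bar a_1,\cdot)+\dots$; integrating against the concentrated density $\varphi_1^{2n/(n-2)}d\mu_{g_0}$ (mass $c_1$, concentration scale $1/\lambda_1$ at $a_1$) and using $d^2(\bar a_1,x)=d^2(\bar a_1,a_1)+O(|x-a_1|^2)$ with $\int\varphi_1^{2n/(n-2)}|x-a_1|^2\sim c_4\lambda_1^{-2}$ produces precisely $c_1\alpha_1^{2n/(n-2)}-c_1\alpha_1^{2n/(n-2)}\bar\lambda_1^2 d^2(\bar a_1,a_1)-c_4\alpha_1^{2n/(n-2)}\bar\lambda_1^2/\lambda_1^2$. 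The remaining cross term $\frac{2n}{n-2}b_0\alpha\alpha_1^{(n+2)/(n-2)}\lambda_1^{-(n-2)/2}$ and the $v^2$ term come as in $(i)$, since $\bar\varphi_1\approx1$ on the relevant concentration region.

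The main obstacle is the bookkeeping in part $(iii)$: one must simultaneously handle three small parameters ($d(\bar a_1,a_1)$, $1/\lambda_1$, $1/\bar\lambda_i$) and the function $v$, track which cross terms between $\varphi_1$, the constant $\alpha$, the slowly-varying $\bar\varphi_1$, and $v$ survive at the target precision $\bar\lambda_1^2 d^2+\lambda_1^{(2-n)/2}+\bar\lambda_1^2\lambda_1^{-2}+\|v\|^2$, and verify that the $O^+$ sign on the $\bar\lambda_i^{-2}$ term is genuinely positive (from positivity of $\int\bar\varphi_i$ together with positivity of the leading $\alpha^{2n/(n-2)}$ weight). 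I would organize this by first expanding $u^{2n/(n-2)}$ once and for all (as in $(i)$) and then integrating that single expansion against each of $1$, $L_{g_0}u$, and $K$, so the three parts share one computation. The linear-in-$v$ terms with no bubble factor — e.g. $\int\alpha^{(n+2)/(n-2)}v\,d\mu_{g_0}$ and $\int\bar\varphi_1\alpha^{(n+2)/(n-2)}v$ — need Lemma \ref{lem_optimal_choice1}(ii) (or a direct argument using that $v$ is small and the remaining weight is bounded) to be pushed into the error, and this is the most delicate sign/precision check.
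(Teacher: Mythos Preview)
Your plan is correct and matches the paper's proof closely: Taylor expand in $v$, handle the principal term via the selfaction/interaction estimates of Lemma \ref{lem_interactions} and Lemma \ref{lem_L_g_0_of_bubble}, kill the $v$-linear pieces with the orthogonality of Lemma \ref{lem_optimal_choice1}, and for $(iii)$ expand $\bar\varphi_1$ around $a_1$ to extract the $c_1-c_1\bar\lambda_1^2d^2-c_4\bar\lambda_1^2/\lambda_1^2$ structure from $\int\bar\varphi_1\varphi_1^{2n/(n-2)}$.

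Two small clarifications. First, you needn't be cautious about $\int\alpha^{(n+2)/(n-2)}v\,d\mu_{g_0}$: since $L_{g_0}1=-1$, the orthogonality $v\perp_{L_{g_0}}1$ gives $\int v\,d\mu_{g_0}=0$ exactly, so this term vanishes (and likewise $\int\varphi_1^{(n+2)/(n-2)}v$ is handled via $L_{g_0}\varphi_1\approx 4n(n-1)\varphi_1^{(n+2)/(n-2)}$ and $v\perp_{L_{g_0}}\varphi_1$). Second, in $(iii)$ your expansion of $\int\bar\varphi_1 u^{2n/(n-2)}$ also produces the cross term $\alpha^{(n+2)/(n-2)}\alpha_1\int\bar\varphi_1\varphi_1\,d\mu_{g_0}$, which does not simplify via $\bar\varphi_1\approx 1$ (the integral $\int\varphi_1$ is itself only $O(\lambda_1^{-(n-2)/2})$ at best, and the $\bar\varphi_1$ weight interacts nontrivially); the paper estimates this separately by splitting the integral at radius $\lambda_1/\bar\lambda_1$ and shows it is $o_{1/\lambda_1+1/\bar\lambda_1}(\lambda_1^{-(n-2)/2})$, hence absorbed in the error.
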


We postpone the proof of Proposition \ref{prop_exp} to the appendix.
Using these expansions, we now prove non connectedness of the exit set $E$, related to the function $K=K_{dp}$ to be prescribed, which readily implies (iii) of Theorem \ref{thm_exit_set_example}.

\begin{proposition}\label{prop_disconnectedness_of_E_for_K_dp}
The exit set $E$ of $K_{dp}$ has at least two connected components.
\end{proposition}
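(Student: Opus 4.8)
The plan is to exhibit two disjoint, nonempty, relatively open-and-closed pieces of $E$, one ``attached'' to the first peak $\bar{a}_1$ and one to the second peak $\bar{a}_2$, and to show that nothing in $E$ interpolates between them. Concretely, I would work inside the neighborhoods $U(1,\varepsilon)\cap\{d(a_1,\bar{a}_i)<\varepsilon\}$ for $i=1,2$ and use Proposition~\ref{prop_exp}. The constraint defining $E$ is $k_u=0$ together with $r_u<0$, $\|u\|_{L^{2n/(n-2)}}=1$, $u>0$; the choice of $\bar\alpha$ is exactly calibrated (via \eqref{exp_k} and the normalization \eqref{norm}) so that $k_u=0$ forces $\alpha_1$ to be bounded away from $0$, i.e.\ the bubble $\varphi_1$ must be genuinely present with definite mass, with $\bar\lambda_1 d(\bar a_1,a_1)$, $1/\lambda_1$, $\bar\lambda_1/\lambda_1$ and $\|v\|$ all forced small. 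First I would make this precise: from \eqref{exp_k}, $k_u=0$ reads
\[
\bar\alpha = c_1\alpha_1^{\frac{2n}{n-2}}\bigl(1 + o(1)\bigr) - (\text{nonpositive corrections}),
\]
so $\alpha_1^{2n/(n-2)}$ is pinned near $\bar\alpha/c_1$; feeding this and the volume identity \eqref{norm} into \eqref{exp_r} shows $r_u<0$ holds automatically in this regime (this is where $\bar\alpha$ being strictly less than $1$, i.e.\ the explicit formula for $\bar\alpha$, is used). Hence for $\varepsilon$ small and $\bar\lambda_1,\bar\lambda_2$ large, $E\cap U(1,\varepsilon)\cap\{d(a_1,\bar a_i)<\varepsilon\}\neq\emptyset$ for each $i$ — produced, say, by solving the one constraint $k_u=0$ for $\alpha_1$ given $v=0$, $a_1=\bar a_i$, $\lambda_1$ large — giving the candidate components $E_1,E_2$.

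Next I would argue separation. The two index sets $\{d(a_1,\bar a_1)<\varepsilon\}$ and $\{d(a_1,\bar a_2)<\varepsilon\}$ are disjoint since $\mathrm{dist}(\bar a_1,\bar a_2)>4\epsilon$. The real point is that \emph{every} $u\in E$ (with $k_u=0$) close enough to the characteristic concentration scale must, by the same pinning argument, contain a bubble concentrated near one of $\bar a_1,\bar a_2$ — because $K_{dp}=-\bar\alpha+\bar\varphi_1+\bar\varphi_2$ is negative except in the two small caps around $\bar a_i$, and $k_u=\int K u^{2n/(n-2)}=0$ with $u$ normalized forces the positive mass of $K$ to be ``used'', which pushes $u$ to concentrate where $K>0$. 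This is essentially the structure already exploited for \eqref{K_sqared_control} and Lemma~\ref{lem_second_flow_properties}: on $E$, $r_u<0$ and $\int K^2 u^{2n/(n-2)}\ge\varepsilon_0$, and an analogous concentration-compactness / blow-up analysis (Proposition~3.1 of \cite{Mayer_Zhu_Negative_Yamabe_1}) shows that a sequence in $E$ either has a weak limit solving $\partial J=0$ — irrelevant here, since we only need the topological statement and may even allow solutions — or concentrates, and concentration can only happen inside $\{K>0\}=\{\bar a_1,\bar a_2\}$-caps. So $E$ (at least the part we control) is covered by the two disjoint open sets $\{d(a_1,\bar a_1)<\varepsilon\}$ and $\{d(a_1,\bar a_2)<\varepsilon\}$, each meeting $E$: hence $E$ is disconnected, and in particular not contractible, giving Theorem~\ref{thm_exit_set_example}(iii).

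The main obstacle, and the step I would spend the most care on, is controlling \emph{all} of $E$ rather than just its two obvious pieces: a priori $E$ could contain points that are not near the single-bubble regime $U(1,\varepsilon)$ at all — e.g.\ multi-bubble configurations, or configurations with no concentration but with $u$ supported essentially in $\{K\ge 0\}$. The argument of Lemma~\ref{transverse} already rules out the non-concentrating possibility (it would force $r_u\ge 0$, contradicting $E\subset\{r<0\}$). The genuinely delicate case is a configuration with two or more bubbles: one must show that if $u\in E$ has bubbles near both $\bar a_1$ and $\bar a_2$ simultaneously, then $k_u>0$, so such $u\notin E$ — this should follow because each bubble near $\bar a_i$ contributes a definite positive amount $c_1\alpha_{(i)}^{2n/(n-2)}$ to $k_u$ by the analogue of \eqref{exp_k}, while the interaction terms $\varepsilon_{i,j}$ and the background $-\bar\alpha$ only subtract a \emph{fixed} amount $-\bar\alpha|M|\alpha^{2n/(n-2)}$ that is already balanced by one bubble; two bubbles overshoot. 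Making this ``overshoot'' quantitative, uniformly over the relevant part of the variational space, is where a careful bookkeeping of Lemma~\ref{lem_interactions} is needed, and it is the crux of the proof.
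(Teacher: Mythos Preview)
Your construction of points $u_1,u_2\in E$ near each peak is in spirit close to the paper's, and the existence step can be made to work essentially as you sketch. The separation argument, however, follows a genuinely different route from the paper's, and in its current form has a real gap.

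You try to prove disconnectedness by showing that \emph{all} of $E$ is contained in the union of two disjoint open sets, one around each peak. This forces you to classify every element of $E$: rule out non-concentrated configurations, rule out multi-bubble configurations, etc. You correctly flag this as the crux, but the sketch you give does not close it. Concentration--compactness (Proposition~3.1 of \cite{Mayer_Zhu_Negative_Yamabe_1}) applies to Palais--Smale sequences for $J$, not to arbitrary sequences in $E$; an element of $E$ has no a~priori reason to be nearly critical, so you cannot directly import that blow-up description. The claim that a two-bubble configuration near $(\bar a_1,\bar a_2)$ forces $k_u>0$ is plausible but would require its own expansion lemma (a two-bubble analogue of Proposition~\ref{prop_exp}), and even then you have not excluded other components of $E$ lying entirely outside $U(1,\varepsilon)\cup U(2,\varepsilon)$. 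Without the inclusion $E\subset U_1\cup U_2$, knowing $E\cap U_i\neq\emptyset$ for disjoint $U_1,U_2$ says nothing about connectedness.

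The paper avoids all of this. It does \emph{not} attempt to describe $E$ globally. Instead, after locating $u_1\in E\cap \bar U_{\bar a_1}(1,\varepsilon)$, it uses the refined expansion \eqref{faithful_expansion_of_k_in_explicit_form_1} to exhibit, in the local coordinates $(r,a_1,\lambda_1,v)$, an annulus $A^{-,+}_{D_1,D_2}$ on which $k<0$ that topologically separates $u_1$ from the boundary of $\bar U_{\bar a_1}(1,\varepsilon)$. Any continuous path in $E$ starting at $u_1$ and reaching $u_2$ (which lies near $\bar a_2$, hence outside $\bar U_{\bar a_1}(1,\varepsilon)$) must cross this annulus, where $k<0$; but $E\subset\{k=0\}$, a contradiction. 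This barrier argument is purely local and needs nothing about the rest of $E$: there may well be further components, but $u_1$ and $u_2$ already lie in distinct ones.
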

\begin{proof}
We first consider for some $0<\varepsilon \ll 1$
\begin{equation}\label{set_u}
\begin{split}
u \in
\bar{U}_{\bar{a}_{1}}(1,\varepsilon)
=
U(1, \varepsilon)  \cap \{d(a_1, \bar{a}_1)\leq\varepsilon\}
\cap \{r\leq0\}
\subset C^{\infty}(M),
\end{split}
\end{equation}
writing $u=\alpha+\alpha_1\varphi_{1}+v$ accordingly.
Hence by Proposition \ref{prop_exp} and, as
$$0<-r<C\ \;\text{ on }\; \{r<0\}\cap\{\|\cdot\|_{L^{\frac{2n}{n-2}}}=1\},$$
we have uniform bounds
\begin{equation}\label{bound}
  0< c< \alpha,\ \alpha_1,\ \|u\|<C<\infty.
\end{equation}
Note, that in view of \eqref{norm} and \eqref{exp_r} of Proposition \ref{prop_exp}, fixing $0<\tau\ll1$ and $r=-\tau$,
we may by means of the implicit function theorem uniquely
determine
\begin{equation*}
\alpha=\alpha(a_{1}, \lambda_{1}, v),\; \alpha_1=\alpha_1(a_{1}, \lambda_{1}, v) \; \text{ on } \;  \{r=-\tau\}.
\end{equation*}
Then, in order to proceed with the relevant expansions,
as an intermediate step we deduce from Proposition \ref{prop_exp}, that up to some
$$
O
(
\bar{\lambda}_{1}^{2}d^{2}(\bar{a}_{1},a_{1})+\lambda_{1}^{\frac{2-n}{2}}+\frac{\bar{\lambda}_{1}^{2}}{\lambda_{1}^{2}}+\Vert v \Vert^{2}
)
$$
for $u \in \bar{U}_{\bar{a}_{1}}(1,\varepsilon)$ there holds
\begin{flalign}
(i) \quad &
1
=
\vert M \vert \alpha^{\frac{2n}{n-2}}
+
c_{1}\alpha_{1}^{\frac{2n}{n-2}}
\label{rough_expansion_normalisation} \\
(ii) \quad &
r
=
- \vert M \vert \alpha^{2}+4n(n-1)c_{1}\alpha_{1}^{2}
\label{rough_expansion_r} \\
(iii) \quad &
k
=
-
\bar{\alpha}
+
O^+(\bar{\lambda}_i^{-2})
+
c_{1}\alpha_{1}^{\frac{2n}{n-2}}.
\label{rough_expansion_k}		
\end{flalign}	
Hence, in view of \eqref{rough_expansion_k} and in order for $k=k_{u}$ to satisfy
\begin{equation}\label{k_roughly_large_condition}
k
=
\sup_{ \bar{u}\in \bar{U}_{\bar{a}_{1}}(1,\varepsilon) } k_{\bar{u}}
+
O
(
\bar{\lambda}_{1}^{2}d^{2}(\bar{a}_{1},a_{1})+\lambda_{1}^{\frac{2-n}{2}}+\frac{\bar{\lambda}_{1}^{2}}{\lambda_{1}^{2}}+\Vert v \Vert^{2}
),
\end{equation}
due to \eqref{rough_expansion_normalisation} and \eqref{rough_expansion_r}
necessarily
$$
0<-r=-r_{u}=
O
(
\bar{\lambda}_{1}^{2}d^{2}(\bar{a}_{1},a_{1})+\lambda_{1}^{\frac{2-n}{2}}+\frac{\bar{\lambda}_{1}^{2}}{\lambda_{1}^{2}}+\Vert v \Vert^{2}
)
$$
and the $(\alpha,\alpha_{1})$-variables of
$u=\alpha+\alpha_1\varphi_{1}+v$
are  up to some
$$O
(
\bar{\lambda}_{1}^{2}d^{2}(\bar{a}_{1},a_{1})+\lambda_{1}^{\frac{2-n}{2}}+\frac{\bar{\lambda}_{1}^{2}}{\lambda_{1}^{2}}+\Vert v \Vert^{2}
)
$$
determined by the relations
$$
1=\vert M \vert \alpha^{\frac{2n}{n-2}}+c_{1}\alpha_{1}^{\frac{2n}{n-2}}
\; \text{ and } \;
r=-\vert M \vert \alpha^{2}+4n(n-1)c_{1} \alpha_{1}^{2},
$$
in particular,
\begin{equation*}
\begin{split}
1
= \; &
\vert M \vert
(\frac{4n(n-1)c_{1}\alpha_{1}^{2}-r}{\vert M \vert })^{\frac{n}{n-2}}
+
c_{1}\alpha_{1}^{\frac{2n}{n-2}}
=
[(\frac{4n(n-1)c_{1}}{\vert M \vert^{\frac{2}{n}} })^{\frac{n}{n-2}}+c_{1}]
\alpha_{1}^{\frac{2n}{n-2}}.
\end{split}
\end{equation*}
We conclude, that up to some
$O(
\bar{\lambda}_{1}^{2}d^{2}(\bar{a}_{1},a_{1})+\lambda_{1}^{\frac{2-n}{2}}+\frac{\bar{\lambda}_{1}^{2}}{\lambda_{1}^{2}}+\Vert v \Vert^{2}
)$
\begin{flalign}
1) \quad &
\alpha_{1}
=
\beta_{1}
=
[
(\frac{4n(n-1)c_{1}}{\vert M \vert^{\frac{2}{n}} })^{\frac{n}{n-2}}
+
c_{1}
]^{\frac{2-n}{2n}}
\label{value_beta_1} \\
2) \quad &
\alpha
=
\beta
=
(\frac{4n(n-1)c_{1}}{\vert M \vert })^{\frac{1}{2}}
\beta_{1}.
\label{value_beta}
\end{flalign}	
Plugging \eqref{value_beta_1},\eqref{value_beta} into Proposition \ref{prop_exp}
we then find up to some
$$
o_
{
\bar{\lambda}_{1}d(\bar{a}_{1},a_{1})+\sfrac{1}{\lambda_{1}}+\Vert v \Vert
+
\sum_{i}\sfrac{1}{\bar{\lambda}_{i}}
}
(
\bar{\lambda}_{1}^{2}d^{2}(\bar{a}_{1},a_{1})+\lambda_{1}^{\frac{2-n}{2}}+\frac{\bar{\lambda}_{1}^{2}}{\lambda_{1}^{2}}+\Vert v \Vert^{2}
)
$$
for $u \in \bar{U}_{\bar{a}_{1}}(1,\varepsilon)$
satisfying \eqref{k_roughly_large_condition}
the simplified expansions
\begin{flalign}
(i) \quad
1
= &
\vert M \vert \alpha^{\frac{2n}{n-2}}
+
c_{1}\alpha_{1}^{\frac{2n}{n-2}}
+
\frac{2n}{n-2}\beta^{\frac{n+2}{n-2}}\beta_{1}\int \varphi_{1}d\mu_{g_0}
&\label{faitful_condition_on_normalization}\\
&
+
 \frac{2n}{n-2}b_0\frac{\beta \beta_{1}^{\frac{n+2}{n-2}}}{\lambda_{1}^{\frac{n-2}{2}}}
+
\frac{n(n+2)}{(n-2)^{2}}
\int (\beta^{\frac{4}{n-2}}+\beta_{1}^{\frac{4}{n-2}}\varphi_{1}^{\frac{4}{n-2}})v^{2}d\mu_{g_{0}}
&\notag
\end{flalign}	
\begin{flalign}
(ii) \quad
r
= &
- \vert M \vert \alpha^{2}+4n(n-1)c_{1}\alpha_{1}^{2}
-
2\beta \beta_{1}\int \varphi_1 d\mu_{g_0}
+
\int L_{g_{0}}vv d\mu_{g_{0}}
&\label{faitful_condition_on_the_expansion_of_r}
\end{flalign}
\begin{flalign}
(iii) \quad
k
= &
-\bar{\alpha}
+
O^+(\bar{\lambda}_i^{-2})
+
c_{1}\alpha_{1}^{\frac{2n}{n-2}}
-
c_{1}\beta_{1}^{\frac{2n}{n-2}}\bar{\lambda}_{1}^{2}d^{2}(a_{1},\bar{a}_{1})
\label{faitful_expansion_on_k} \\
& -
c_{4}\beta_{1}^{\frac{2n}{n-2}}\frac{\bar{\lambda}_{1}^{2}}{\lambda_{1}^{2}}
+
\frac{2n}{n-2}
b_0\frac{\beta\beta_{1}^{\frac{n+2}{n-2}}}{\lambda_{1}^{\frac{n-2}{2}}} +
\frac{n(n+2)}{(n-2)^{2}}\beta_{1}^{\frac{4}{n-2}}
\int \varphi_{1}^{\frac{4}{n-2}}v^{2}d\mu_{g_{0}}.
&\notag
\end{flalign}
Next, still assuming \eqref{k_roughly_large_condition}, from \eqref{faitful_condition_on_the_expansion_of_r} and up to some
\begin{equation}\label{exit_set_example_general_negligible_error_term}
o_
{
-r+\bar{\lambda}_{1}d(\bar{a}_{1},a_{1})+\sfrac{1}{\lambda_{1}}+\Vert v \Vert
+
\sum_{i}\sfrac{1}{\bar{\lambda}_{i}}
}
(
-r + \bar{\lambda}_{1}^{2}d^{2}(\bar{a}_{1},a_{1})+\lambda_{1}^{\frac{2-n}{2}}+\frac{\bar{\lambda}_{1}^{2}}{\lambda_{1}^{2}}+\Vert v \Vert^{2}
)
\end{equation}
we find
\begin{equation*}
\begin{split}
\alpha^{\frac{2n}{n-2}}
= \; &
\vert M \vert^{\frac{n}{2-n}}
[
4n(n-1)c_{1}\alpha_{1}^{2}-r - 2 \beta \beta_{1}\int\varphi_1 d\mu_{g_0} + \int L_{g_{0}}vv d\mu_{g_{0}}
]^{\frac{n}{n-2}} \\
= \; &
\vert M \vert^{\frac{n}{2-n}}
(4n(n-1)c_{1})^{\frac{n}{n-2}}\alpha_{1}^{\frac{2n}{n-2}} \\
& -
\frac{n}{n-2}
\vert M \vert^{\frac{n}{2-n}}
(4n(n-1)c_{1})^{\frac{2}{n-2}}\beta_{1}^{\frac{4}{n-2}}\\
& \quad \quad \quad \quad \quad \quad \quad \quad \quad
\cdot[r+2 \beta \beta_{1}\int\varphi_1 d\mu_{g_0} - \int L_{g_{0}}vv d\mu_{g_{0}}]
\end{split}
\end{equation*}
and plugging this into \eqref{faitful_condition_on_normalization}, that up to the same error \eqref{exit_set_example_general_negligible_error_term}
\begin{equation*}
\begin{split}
1
= \; &
\vert M \vert
\bigg(
\vert M \vert^{\frac{n}{2-n}}
(4n(n-1)c_{1})^{\frac{n}{n-2}}\alpha_{1}^{\frac{2n}{n-2}}  \\
&  \quad \quad \;\;-
\frac{n}{n-2}
\vert M \vert^{\frac{n}{2-n}}
(4n(n-1)c_{1})^{\frac{2}{n-2}}\beta_{1}^{\frac{4}{n-2}}\\
& \quad \quad \quad \quad \quad \quad \quad \quad \quad
\cdot
[r+2 \beta \beta_{1}\int \varphi_1 d\mu_{g_0}-\int L_{g_{0}}vv d\mu_{g_{0}}]
\bigg) \\
& +
c_{1}\alpha_{1}^{\frac{2n}{n-2}}
+
\frac{2n}{n-2}\beta^{\frac{n+2}{n-2}}\beta_{1}\int \varphi_1 d\mu_{g_0}
+
\frac{2n}{n-2}b_0\frac{\beta \beta_{1}^{\frac{n+2}{n-2}}}{\lambda_{1}^{\frac{n-2}{2}}}\\
&
+
\frac{n(n+2)}{(n-2)^{2}}
\int (\beta^{\frac{4}{n-2}}+\beta_{1}^{\frac{4}{n-2}}\varphi_{1}^{\frac{4}{n-2}})v^{2}d\mu_{g_{0}}
\\
= \; &
[(\frac{4n(n-1)c_{1}}{\vert M \vert^{\frac{2}{n}} })^{\frac{n}{n-2}}+c_{1}]\alpha_{1}^{\frac{2n}{n-2}}
-
\frac{n}{n-2}(\frac{4n(n-1)c_{1}}{\vert M \vert })^{\frac{2}{n-2}}\beta_{1}^{\frac{4}{n-2}}r\\
&
+
\frac{2n}{n-2}b_0\frac{\beta \beta_{1}^{\frac{n+2}{n-2}}}{\lambda_{1}^{\frac{n-2}{2}}}  +
\frac{n}{n-2}(\frac{4n(n-1)c_{1}}{\vert M \vert })^{\frac{2}{n-2}}\beta_{1}^{\frac{4}{n-2}}\int L_{g_{0}}vv d\mu_{g_{0}}\\
&
+
\frac{n(n+2)}{(n-2)^{2}}
\int (\beta^{\frac{4}{n-2}}+\beta_{1}^{\frac{4}{n-2}}\varphi_{1}^{\frac{4}{n-2}})v^{2}d\mu_{g_{0}}
\\
& +
\frac{2n}{n-2}\beta \beta_{1}
(\beta^{\frac{4}{n-2}}
-
(\frac{4n(n-1)c_{1}}{\vert M \vert })^{\frac{2}{n-2}} \beta_{1}^{\frac{4}{n-2}}
)\int\varphi_1d\mu_{g_0}
.
\end{split}
\end{equation*}
The latter term vanishes
due to \eqref{value_beta}, whence
up to an error as in \eqref{exit_set_example_general_negligible_error_term}
\begin{equation*}
\begin{split}
\alpha_{1}^{\frac{2n}{n-2}}
=\; &
[(\frac{4n(n-1)c_{1}}  {\vert M \vert^{\frac{2}{n}} } )^{\frac{n}{n-2}}  + c_{1}]^{-1}
[
1
 +
\frac{n}{n-2}(\frac{4n(n-1)c_{1}}{\vert M \vert })^{\frac{2}{n-2}}\beta_{1}^{\frac{4}{n-2}}r\\
&
-
\frac{2n}{n-2}b_0\frac{\beta \beta_{1}^{\frac{n+2}{n-2}}}{\lambda_{1}^{\frac{n-2}{2}}}
-
\frac{n}{n-2}(\frac{4n(n-1)c_{1}}{\vert M \vert })^{\frac{2}{n-2}}\beta_{1}^{\frac{4}{n-2}}\int L_{g_{0}}vv d\mu_{g_{0}} \\
&
-
\frac{n(n+2)}{(n-2)^{2}}
\int (\beta^{\frac{4}{n-2}}+\beta_{1}^{\frac{4}{n-2}}\varphi_{1}^{\frac{4}{n-2}})v^{2}d\mu_{g_{0}}
]
\end{split}
\end{equation*}
for $u \in \bar{U}_{\bar{a}_{1}}(1,\varepsilon)$ satisfying \eqref{k_roughly_large_condition},
and inserting this into \eqref{faitful_expansion_on_k} we find
\begin{equation*}
\begin{split}
k
= \; &
-\bar{\alpha}
+
O^+(\bar{\lambda}_i^{-2})
+
\frac{c_{1}}
{(\frac{4n(n-1)c_{1}}{\vert M \vert^{\frac{2}{n}} })^{\frac{n}{n-2}}+c_{1}}
\\ \quad &
\Bigg[
1
+
\frac{n}{n-2}(\frac{4n(n-1)c_{1}}{\vert M \vert })^{\frac{2}{n-2}}\beta_{1}^{\frac{4}{n-2}}r
-
\frac{2n}{n-2}b_0\frac{\beta \beta_{1}^{\frac{n+2}{n-2}}}{\lambda_{1}^{\frac{n-2}{2}}}
\\
&
\quad -
\frac{n}{n-2}(\frac{4n(n-1)c_{1}}{\vert M \vert })^{\frac{2}{n-2}}\beta_{1}^{\frac{4}{n-2}}\int L_{g_{0}}vv d\mu_{g_{0}} \\
& \quad
-
\frac{n(n+2)}{(n-2)^{2}}
\int (\beta^{\frac{4}{n-2}}+\beta_{1}^{\frac{4}{n-2}}\varphi_{1}^{\frac{4}{n-2}})v^{2}d\mu_{g_{0}}
\Bigg] \\
& -
c_{1}\beta_{1}^{\frac{2n}{n-2}}\bar{\lambda}_{1}^{2}d^{2}(\bar{a}_{1},\bar{a}_{1})
-
c_{4}\beta_{1}^{\frac{2n}{n-2}}\frac{\bar{\lambda}_{1}^{2}}{\lambda_{1}^{2}}
\\
& +
\frac{2n}{n-2}b_0\frac{\beta\beta_{1}^{\frac{n+2}{n-2}}}{\lambda_{1}^{\frac{n-2}{2}}}
+
\frac{n(n+2)}{(n-2)^{2}}\beta_{1}^{\frac{4}{n-2}}
\int \varphi_{1}^{\frac{4}{n-2}}v^{2}d\mu_{g_{0}},
\end{split}
\end{equation*}
whence from \eqref{value_beta} we derive after some simplifications
\begin{equation}\label{faithful_expansion_of_k_in_explicit_form}
\begin{split}
k = \;  &
-\bar{\alpha}
+
O^+(\bar{\lambda}_i^{-2})
+
\frac{c_{1}}
{(\frac{4n(n-1)c_{1}}{\vert M \vert^{\frac{2}{n}} })^{\frac{n}{n-2}}+c_{1}} \\
& +
\frac{n}{n-2}(\frac{4n(n-1)c_{1}}{\vert M \vert })^{\frac{2}{n-2}}
\frac{c_{1}\beta_{1}^{\frac{4}{n-2}}}
{(\frac{4n(n-1)c_{1}}{\vert M \vert^{\frac{2}{n}} })^{\frac{n}{n-2}}+c_{1}}
\cdot r
\\
& +
\frac{2n}{n-2}b_0
[1-
\frac{c_{1}}
{(\frac{4n(n-1)c_{1}}{\vert M \vert^{\frac{2}{n}} })^{\frac{n}{n-2}}+c_{1}}]
\frac{\beta \beta_{1}^{\frac{n+2}{n-2}}}{\lambda_{1}^{\frac{n-2}{2}}}\\
& -
\frac{n}{n-2}\frac{c_{1}\beta^{\frac{4}{n-2}}}
{(\frac{4n(n-1)c_{1}}{\vert M \vert^{\frac{2}{n}} })^{\frac{n}{n-2}}+c_{1}}
[
\int L_{g_{0}}vvd\mu_{g_{0}}
+
\frac{n+2}{n-2}\int v^{2}d\mu_{g_{0}} \\
& \quad\quad\quad\quad\quad\quad\quad\quad\quad\quad\quad\quad\quad
-
4n(n-1)\frac{n+2}{n-2}\int \varphi_{1}^{\frac{4}{n-2}}v^{2}d\mu_{g_{0}}
] \\
&
-
c_{0}\beta_{1}^{\frac{2n}{n-2}}\bar{\lambda}_{1}^{2}d^{2}(a_{1},\bar{a}_{i})
-
c_{2}\beta_{1}^{\frac{2n}{n-2}}\frac{\bar{\lambda}_{1}^{2}}{\lambda_{1}^{2}}.
\end{split}
\end{equation}
Since $L_{g_{0}}=-c_{n}\Delta_{g_{0}}-1$, where $c_{n}=4\frac{n-1}{n-2}$, we find with some constant $\gamma_{v}>0$
\begin{equation*}
\begin{split}
\int L_{g_{0}}vvd\mu_{g_{0}} &
+
\frac{n+2}{n-2}\int v^{2}d\mu_{g_{0}}
-
4n(n-1)\frac{n+2}{n-2}\int \varphi_{1}^{\frac{4}{n-2}}v^{2}d\mu_{g_{0}} \\
= \; &
c_{n} \int
[
-\Delta_{g_{0}}v
+
\frac{1}{n-1}v
-
n(n+2)
]
v d\mu_{g_{0}}
\geq
\gamma_{v}\Vert v \Vert^{2},
\end{split}
\end{equation*}
cf. Appendix D in \cite{Rey}. Thus, recalling \eqref{exit_set_example_general_negligible_error_term} and letting
\begin{equation}\label{shape_of_the_constant_term}
\gamma_{0}=-\bar{\alpha}
+
O^+(\bar{\lambda}_i^{-2})
+
\frac{c_{1}}
{(\frac{4n(n-1)c_{1}}{\vert M \vert^{\frac{2}{n}} })^{\frac{n}{n-2}}+c_{1}},
\end{equation}
with readily given constants $\gamma_{1},\gamma_{2},\gamma_{3}>0$
we  obtain from \eqref{faithful_expansion_of_k_in_explicit_form}
\begin{align}\label{faithful_expansion_of_k_in_explicit_form_1}
k = \;  &
\gamma_{0} + \gamma_{1}r - \gamma_{2}\bar{\lambda}_{1}^{2}d^{2}(a_{1},\bar{a}_{1})
+
\gamma_{3}\lambda_{1}^{\frac{2-n}{2}} - \gamma_{4} \vert \frac{\bar{\lambda}_{1}}{\lambda_{1}} \vert^{2}
-
O^{+}(\Vert v \Vert^{2}) \\
& +
o_
{
-r+\bar{\lambda}_{1}d(\bar{a}_{1},a_{1})+\sfrac{1}{\lambda_{1}}+\Vert v \Vert
+
\sum_{i}\sfrac{1}{\bar{\lambda}_{i}}
}
(
-r + \bar{\lambda}_{1}^{2}d^{2}(\bar{a}_{1},a_{1})+\lambda_{1}^{\frac{2-n}{2}}+\frac{\bar{\lambda}_{1}^{2}}{\lambda_{1}^{2}}+\Vert v \Vert^{2}
)\nonumber
\end{align}
for $u \in \bar{U}_{\bar{a}_{1}}(1,\varepsilon)$ satisfying \eqref{k_roughly_large_condition}.
In view of \eqref{shape_of_the_constant_term} let
\begin{equation}\label{bar_alpha}
 \bar{\alpha}=\frac{c_{1}}
{(\frac{4n(n-1)c_{1}}{\vert M \vert^{\frac{2}{n}} })^{\frac{n}{n-2}}+c_{1}}
\end{equation}
such, that $\gamma_{0}>0$ slightly positive. Then from \eqref{faithful_expansion_of_k_in_explicit_form_1} we easily see, that for
\begin{enumerate}[label=(\roman*)]
\item $n\geq 7$ we can readily ignore the $\lambda_{1}^{\frac{2-n}{2}}$-term
and find around
\begin{equation*}
  r=0,\; a_{1}=\bar{a}_{1},\; \lambda_{1}=\infty  \; \text{ and } \; v=0
\end{equation*}
a sign changing Morse type maximum structure of $k=k(r, a_{1}, \lambda_{1}, v)$, i.e.
$$
k>0 \;\text{ on }\; B^{-,+}_{\delta}
\; \text{ and } \;
k<0 \; \text{ on } \;  A^{-,+}_{D_{1},D_{2}}
$$
for suitable $0<\delta< D_{1} < D_{2} \ll 1$, a \textit{pointed quarter ball}
\begin{equation*}
\begin{split}
B^{-,+}_{\delta}
=
\{
(r&,\lambda_{1},a_{1},v)  \in \R \times \R  \times M \times W^{1,2}(M)
\; : \;\\
&
0
<
-\gamma_{1}	r
+
\gamma_{2}\bar{\lambda}_{1}^{2}d^{2}(a_{1},\bar{a}_{1})
+\gamma_{4}\vert \frac{\bar{\lambda}_{1}}{\lambda_{1}}\vert^{2}
+
O^{+}(\Vert v \Vert^{2})
< \delta^{2}
\}
 \end{split}
\end{equation*}
and a surrounding \textit{quarter annulus} $ A^{-,+}_{ D_{1},D_{2}}$ of type
\begin{equation*}
\begin{split}
A^{-,+}_{D_{1},D_{2}}
=
\{
( & r,\lambda_{1}, a_{1},v) \in \R \times \R \times M \times W^{1,2}(M)
\; : \;\\
&
D_{1}^{2}
<
-\gamma_{1}	r
+
\gamma_{2}\bar{\lambda}_{1}^{2}d^{2}(a_{1},\bar{a}_{1})
+\gamma_{4}\vert \frac{\bar{\lambda}_{1}}{\lambda_{1}}\vert^{2}
+
O^{+}(\Vert v \Vert^{2})
<D_2^{2}\}
  \end{split}
\end{equation*}
\item $n=6$ we argue as before, as we can ignore the $\lambda_{1}^{\frac{2-n}{2}}$-term due to $\bar{\lambda}_{1} \gg 1$
\item $3\leq n\leq 5$ we lose the maximum structure due to the $\lambda_{1}^{\frac{2-n}{2}}$-term, but
\begin{enumerate}[label=(\roman*)]
\item[1)] $k>0$ inside $B_{\delta}^{-,+}$, as $\gamma_{0}$ is sightly positive
\item[2)] $k<0$ on $A^{-,+}_{D_{1},D_{2}}$ by observing, cf. \eqref{faithful_expansion_of_k_in_explicit_form_1}, that
    \begin{equation*}
      \begin{split}
        \gamma_{3}\frac{1}{\lambda_{1}^{\frac{n-2}{2}}} - \gamma_{4} \vert \frac{\bar{\lambda}_{1}}{\lambda_{1}} \vert^{2} < 0
\Longleftrightarrow \;
\frac{1}{\lambda_{1}^{\frac{6-n}{2}}}
<
\frac{\gamma_{4}}{\gamma_{3}\bar{\lambda}_{1}^{2}},
      \end{split}
    \end{equation*}
while
$\frac{6-n}{2}>0 \; \text{ for } \; n=3,4,5$ and $\bar{\lambda}_{1} \gg 1$ is large.
\end{enumerate}
\end{enumerate}
Thus in any case
$$k>0 \; \text{ on } \; B_{\delta}^{-,+}
\; \text{ and } \; k<0
\; \text{ on } \;
A_{D_{1},D_{2}}^{-,+}.$$
We now restrict to $a_{1}=\bar{a}_{1},v=0$.
Then  due to \eqref{Bubble_Definition} and \eqref{bound} we may assume
$$
0<c<u=\alpha+\alpha_1\varphi_{\bar{a}_{1},\lambda_{1}}
\in \bar{U}_{\bar{a}_{1}}(1,\varepsilon)
$$
and in view of \eqref{faithful_expansion_of_k_in_explicit_form_1},
fixing $0<-r=\tau \ll 1$ sufficiently small, that
$$
u \in B_{D_{2}}^{-,+}
\; \text{ for all } \;
l_{1}\leq\lambda_{1} <\infty
$$
for some $l_{1}>0$, while
$$
u\in
\left\{
\begin{matrix*}[l]
A_{D_{1},D_{2}}^{-,+} &  \;\text{for}\;  &  \lambda_{1}=l_{1}  \\
B_{\delta}^{-,+} &  \;\text{for}\;  & \lambda_{1} \gg l_{1}.
\end{matrix*}
\right.
$$
By continuity  we deduce
$$
k_{u_{1}}=0 \; \text{ for } \; u_{1}=\alpha + \alpha_{1}\varphi_{\bar{a}_{1},\lambda_{1}^{\prime}}
\; \text{ and some } \; \lambda_{1}^{\prime}>l_{1},
$$
while of course
$-r_{u_{1}}=\tau>0,\; 0<u_1\in C^{\infty}$ and $\Vert u_{1} \Vert_{L^{\frac{2n}{n-2}}}=1$. Thus
$$
u_1\in A_{\delta,D_{1}}^{-,+}
\cap E,
\;
E= \{ k=0 \} \cap \{ r<0 \} \cap \{ \Vert \cdot \Vert_{L^{\frac{2n}{n-2}}}=1 \} \subset C^{\infty}(M,\R_{>0}).
$$
In the same way,
considering $\bar{U}_{\bar{a}_{2}}(1,\varepsilon)$ instead of $\bar{U}_{\bar{a}_{1}}(1,\varepsilon)$,
see \eqref{set_u},
we find
$$u_{2}\in \bar{U}_{\bar{a}_{2}}(1,\varepsilon)\cap E.$$
Clearly the connected components of $E$, generated by $u_{1}$ and $u_{2}$ are distinct, since every path within $E$, connecting
$u_{1}$ to $u_{2}$, would have to pass through either of the corresponding
$(D_{1},D_{2})$-annuli, upon which $k<0$.
\end{proof}

\begin{remark} A few comments are in order.
\begin{enumerate}[label=(\roman*)]
\item $K_{dp}$ as a suitable double-peaked function induces at least
two connected components of $E$.
Likewise a suitable $m$-peaked function $K_{mp}$ will give rise to at
least $m$-many connected components of $E$.
\item Such $m$-peaked functions are of type
$$
K_{mp}=\bar{\alpha} + \sum_{j=1}^{m}\bar{\varphi}_{a_{j},\lambda_{j}}
$$
with suitable peak functions $\bar{\varphi}$, and each connected
components is found on functions of type
$$
u=\alpha + \alpha_{1}\varphi_{a_{1},\lambda_{1}} \; \text{ with } \;
a_{1} \; \text{ close to a peak } \; \bar{a}_{j}
$$
with a bubbling function $\varphi$. But to make the argument work, the
constant function $\bar{\alpha}$ has to be close to a certain value, see \eqref{bar_alpha}.
We conjecture, that for a $m$-peaked function $K_{mp}$ as above we will
also find different connected components of $E$, but on functions of
type
$$
u=\alpha + \sum_{i=1}^{q}\alpha_{i}\varphi_{a_{i},\lambda_{i}} \;
\text{ with each } \; a_{i} \; \text{ close to a distinct peak } \;
\bar{a}_{j},
$$
provided $q<p$ and $\bar{\alpha}=\bar{\alpha}(q)$ is chosen
appropriately.
\item Of course the double peak function $K=K_{dp}$ cannot satisfy an A-B-inequality, since $E\neq\emptyset$, cf. Corollary \ref{cor_exit_set_versus_A_B_inequality}, and
$$\frac{\sup_M K}{\inf_{M\setminus\Omega}(-K)}\geq \frac{\sup_M K}{\sup_{M}(-K)} = (4n(n-1))^{\frac{n}{n-2}}(\frac{c_1}{|M|})^{\frac{2}{n-2}}$$
for any $\Omega\supset\supset\{K\geq0\}$, cf. Proposition \ref{prop_A_B_inequality_from_A_B_conditions}. Hence the existence result (iii) of Theorem \ref{thm_exit_set_example},
proved via topological obstruction, seems out of reach of smallness assumption based arguments as in
\cite{Aubin_Bismuth,Mayer_Zhu_Negative_Yamabe_1,Rauzy_Existence}.
\end{enumerate}
\end{remark}

\section{Appendix}
Here we prove Proposition \ref{prop_exp}.

\begin{proof}[\textbf{Proof of (\ref{norm})}]
A simple expansion shows
\begin{equation}\label{expansion_of_norm_up_to_o(v^2)}
\begin{split}
1
= \; &
\|u\|_{L^{\frac{2n}{n-2}}}^{\frac{2n}{n-2}}=\int (\alpha+\alpha_1\varphi_{1}+v)^{\frac{2n}{n-2}}d\mu_{g_0}
\\
= \; &
\int (\alpha+\alpha_1\varphi_{1})^{\frac{2n}{n-2}}d\mu_{g_0} + \frac{2n}{n-2}\int (\alpha+\alpha_1\varphi_{1})^{\frac{n+2}{n-2}}vd\mu_{g_0}
\\
& +
\frac{n(n+2)}{(n-2)^2}\int (\alpha+\alpha_1\varphi_{1})^{\frac{4}{n-2}}v^2d\mu_{g_0}+o_{\|v\|}(\|v\|^2).
\end{split}
\end{equation}
For the principal term in \eqref{expansion_of_norm_up_to_o(v^2)} we obtain
\begin{equation*}
\begin{split}
\int (\alpha  + \alpha_1\varphi_{1})^{\frac{2n}{n-2}}d\mu_{g_0}
= \; &
|M|\alpha^{\frac{2n}{n-2}} + c_1\alpha_1^{\frac{2n}{n-2}} + \frac{2n}{n-2}\int\alpha^{\frac{n+2}{n-2}}(\alpha_1\varphi_{1})d\mu_{g_0}\\
&
+ \frac{2n}{n-2}\int\alpha(\alpha_1\varphi_{1})^{\frac{n+2}{n-2}}d\mu_{g_0} +
\int  \mathcal{R}_{0}d\mu_{g_0}+O(\lambda_1^{-n})\\
=\;& |M|\alpha^{\frac{2n}{n-2}}+c_1\alpha_1^{\frac{2n}{n-2}} + \frac{2n}{n-2}\alpha^{\frac{n+2}{n-2}}\alpha_1\int\varphi_{1}d\mu_{g_0}\\
&
+ \frac{2n}{n-2}b_0\frac{\alpha\alpha_1^{\frac{n+2}{n-2}}}{\lambda_1^{\frac{n-2}{2}}}
+ o_{\frac{1}{\lambda_1}}(\lambda_1^{\frac{2-n}{2}}+\lambda_1^{-2}),
\end{split}
\end{equation*}
where $b_0=\underset{\R^{n}}{\int}\frac{dx}{(1+r^{2})^{\frac{n+2}{2}}}$, cf. Lemma \ref{lem_interactions}, and
\begin{equation*}
  \mathcal{R}_{0}=(\alpha + \alpha_1\varphi_{1})^{\frac{2n}{n-2}}-(\alpha^{\frac{2n}{n-2}}+(\alpha_1\varphi_{1})^{\frac{2n}{n-2}}
 + \frac{2n}{n-2}(\alpha^{\frac{n+2}{n-2}}(\alpha_1\varphi_{1})+\alpha(\alpha_1\varphi_{1})^{\frac{n+2}{n-2}}))
\end{equation*}
with
\begin{equation*}
|\int\mathcal{R}_{0}d\mu_{g_0}|
=
o_{\frac1{\lambda_1}}(\lambda_1^{\frac{2-n}{2}}+\lambda_1^{-2}).
\end{equation*}
By Lemmata \ref{lem_L_g_0_of_bubble} and \ref{lem_optimal_choice1},
\begin{equation}\label{v_linear}
  \int  v d\mu_{g_0}=0,\; \; \int \varphi_{1}^{\frac{n+2}{n-2}}vd\mu_{g_0} =    o_{\frac{1}{\lambda_1}}(\lambda_1^{\frac{2-n}{2}} + {\lambda_1^{-2}}+\|v\|^2),
\end{equation}
whence for the $v$-linear terms in \eqref{expansion_of_norm_up_to_o(v^2)} we find
\begin{equation*}
\begin{split}
\int (\alpha+\alpha_1\varphi_{1})^{\frac{n+2}{n-2}}v d\mu_{g_0}
= \; &
\int (\alpha^{\frac{n+2}{n-2}} + \alpha_1^{\frac{n+2}{n-2}}\varphi_{1}^{\frac{n+2}{n-2}} + \mathcal{R}_1)v
d\mu_{g_0}\\
= \; &
\int \mathcal{R}_1vd\mu_{g_0} + o_{\frac{1}{\lambda_1}}(\lambda_1^{\frac{2-n}{2}} + {\lambda_1^{-2}}+\|v\|^2),
 \end{split}
\end{equation*}
where
\begin{equation}\label{def_R_1}
\begin{split}
\mathcal{R}_1
=
O(\alpha^{\frac{4}{n-2}}\inf(\alpha, \alpha_1\varphi_{1})
 +
(\alpha_1\varphi_{1})^{\frac{4}{n-2}}\inf(\alpha_1\varphi_{1}, \alpha)
).
\end{split}
\end{equation}
Using
\begin{flalign}
&
(1) \quad
\int \inf(\alpha, \alpha_1\varphi_{1})vd\mu_{g_0}
=
O(\frac{\|v\|}{\lambda_1^{\frac{n+2}{4}}})
=
o_{\frac{1}{\lambda_1}}(\lambda_1^{\frac{2-n}{2}}+\lambda_1^{-2}+\|v\|^2)
& \label{R_1_first}
\end{flalign}	
\vspace{-10pt}
\begin{flalign}
&
(2) \quad
\int \varphi_{1}^{\frac{4}{n-2}}\inf(\alpha, \alpha_1\varphi_{1})vd\mu_{g_0}
=O(\frac{\|v\|}{\lambda_1^{\frac{n+2}{4}}}) + O(\frac{\|v\|}{\lambda_1^{\frac{n-2}{2}}}) \label{R_1_second} \\ &\quad\quad\quad\quad\quad\quad\quad\quad\quad\quad\quad\quad\quad~~\;  = o_{\frac{1}{\lambda_1}}(\lambda_1^{\frac{2-n}{2}}+\lambda_1^{-2}+\|v\|^2),
& \notag
\end{flalign}	
we find
\begin{eqnarray*}
\int (\alpha+\alpha_1\varphi_{1})^{\frac{n+2}{n-2}}vd\mu_{g_0}
=
o_{\frac{1}{\lambda_1}}(\lambda_1^{\frac{2-n}{2}}+\lambda_1^{-2}+\|v\|^2).
\end{eqnarray*}
Similarly for the $v$-quadratic term in \eqref{expansion_of_norm_up_to_o(v^2)} we have
\begin{equation*}
\begin{split}
\int (\alpha+& \alpha_1\varphi_{1})^{\frac{4}{n-2}} v^2d\mu_{g_0}
\\
= \; &
\alpha^{\frac{4}{n-2}}\int v^2d \mu_{g_0} + \alpha_1^{\frac{4}{n-2}}\int v^2\varphi_{1}^{\frac{4}{n-2}}d\mu_{g_0} + o_{\frac1{\lambda_1}}(\|v\|^2).
\end{split}
\end{equation*}
Collecting terms we conclude
\begin{equation*}
\begin{split}
 1 =
 \|u\|_{L^{\frac{2n}{n-2}}}^{\frac{2n}{n-2}}
 = \; &
 |M|\alpha^{\frac{2n}{n-2}} + c_1\alpha_1^{\frac{2n}{n-2}} +
\frac{2n\alpha^{\frac{n+2}{n-2}}\alpha_1}{n-2}\int\varphi_{1}d\mu_{g_0}
+
\frac{2n}{n-2}b_0
\frac{\alpha\alpha_1^{\frac{n+2}{n-2}}}{\lambda_1^{\frac{n-2}{2}}} \\
& +
\frac{n(n+2)}{(n-2)^2}(\alpha^{\frac{4}{n-2}}\int v^2d\mu_{g_0}
+
\alpha_1^{\frac{4}{n-2}}\int v^2\varphi_{1}^{\frac{4}{n-2}}d\mu_{g_0}) \\
&  +
o_{\frac{1}{\lambda_1}}(\lambda_1^{\frac{2-n}{2}}+\lambda_1^{-2}+\|v\|^2).
\qedhere
\end{split}
\end{equation*}
\end{proof}

\begin{proof}[\textbf{Proof of (\ref{exp_r}})]
Applying Lemmata \ref{lem_L_g_0_of_bubble} and \ref{lem_optimal_choice1}, we have
\begin{equation*}
\begin{split}
r
= \; &
\int L_{g_0}(\alpha + \alpha_1\varphi_{1}+v) (\alpha+\alpha_1\varphi_{1} + v)d\mu_{g_0}
\\
= \; &
-|M|\alpha^2 + 4n(n-1)c_1\alpha_1^2 - 2\alpha\alpha_1\int \varphi_{1}d\mu_{g_0} \\
& + \int L_{g_0}vvd\mu_{g_0} + o_{\frac{1}{\lambda_1}}(\lambda_1^{-2})
.
\qedhere
\end{split}
\end{equation*}
\end{proof}

\begin{proof}[\textbf{Proof of (\ref{exp_k})}]
Recalling
$$u=\alpha+\alpha_1\varphi_{1}+v\; \text{ with }\; d(\bar{a}_1, a_1)<\varepsilon
\; \text{ and } \;
\Vert u \Vert_{L^{\frac{2n}{n-2}}}=1,$$
a simple expansion shows
\begin{align}\label{expansion_of_k_up_to_o_v^2}
k
= \; &
\int Ku^{\frac{2n}{n-2}}d\mu_{g_0} = \int (-\bar{\alpha} + \sum_{i=1}^2\bar{\varphi}_i)(\alpha + \alpha_1\varphi_{1} + v)^{\frac{2n}{n-2}}
d\mu_{g_0}\nonumber\\
= \; &
-\bar{\alpha}+\sum_{i=1}^2\int \bar{\varphi}_i(\alpha + \alpha_1\varphi_{1})^{\frac{2n}{n-2}}
d\mu_{g_0} +
\frac{2n}{n-2}\sum_{i=1}^2\int \bar{\varphi}_i(\alpha + \alpha_1\varphi_{1})^{\frac{n+2}{n-2}}v
d\mu_{g_0}\nonumber\\
& +
\frac{n(n+2)}{(n-2)^2}\sum_{i=1}^2\int \bar{\varphi}_i(\alpha + \alpha_1\varphi_{1})^{\frac{4}{n-2}}v^2d\mu_{g_0}+o_{\|v\|}(\|v\|^2)\\
=\; &
-\bar{\alpha} + I_1 + I_2 + I_3 + o_{\|v\|}(\|v\|^2).\nonumber
\end{align}
As in the proof of \eqref{norm} we have
\begin{equation*}
\begin{split}
I_1
= \; &
\alpha^{\frac{2n}{n-2}}\sum_{i=1}^2\int \bar{\varphi}_id\mu_{g_0}
+
\alpha_1^{\frac{2n}{n-2}}\int \bar{\varphi}_1\varphi_{1}^{\frac{2n}{n-2}}d\mu_{g_0}\\
& + \frac{2n}{n-2}(\alpha^{\frac{n+2}{n-2}}\alpha_1\int\bar{\varphi}_1\varphi_{1}d\mu_{g_0}
+ \alpha\alpha_1^{\frac{n+2}{n-2}}\int\bar{\varphi}_1\varphi_{1}^{\frac{n+2}{n-2}}d\mu_{g_0})
\end{split}
\end{equation*}
up to some
\begin{equation}\label{error}
  o_{\bar{\lambda}_1d(a_1, \bar{a}_1) + \sfrac{1}{\lambda_1}+\sfrac{1}{\bar{\lambda}_1}}(\bar{\lambda}_1^2d^2(a_1, \bar{a}_1)
+
\lambda_1^{\frac{2-n}{2}} +\frac{\bar{\lambda}_1^2}{\lambda_1^2} ).
\end{equation}
Moreover, up to the same error, by expansion
we find
\begin{equation*}
\begin{split}
\int \bar{\varphi}_1\varphi_{1}^{\frac{2n}{n-2}}d\mu_{g_0}
= \; &
\int_{B_{\epsilon}(a_1)} (\bar{\varphi}_1(a_1)+\nabla\bar{\varphi}_1(a_1)\cdot (x-a_1)
\\
& \hspace{12pt}
+\frac12\nabla^2\bar{\varphi}_1(a_1)(x-a_1, x-a_1))(\frac{\lambda_1}{1+\lambda_1^{2}\gamma_{n} G_{a_1}^{\frac{2}{2-n}}})^{n}d\mu_{g_{a_1}}\\
=\;&
  \bar{\varphi}_1(a_1)c_1 + \frac {c'_2 }{2n}\frac{\Delta\bar{\varphi}_1(a_1)}{\lambda_1^2}
  =  c_1-c_1\bar{\lambda}_1^2d^2(a_1, \bar{a}_1)
-c_4\frac{\bar{\lambda}_1^2}{\lambda_1^2},
\end{split}
\end{equation*}
see also \eqref{estimates_derivatives_of_bar_phi}, and
\begin{equation*}
  \int \bar{\varphi}_1\varphi_{1}^{\frac{n+2}{n-2}}d\mu_{g_0}
=
\int_{B_{\epsilon}(a_1)} \bar{\varphi}_1(a_1)(\frac{\lambda_1}{1+\lambda_1^{2}\gamma_{n} G_{a_1}^{\frac{2}{2-n}}})^{\frac{n+2}2}d\mu_{g_{a_1}}
= b_0\lambda_1^{\frac{2-n}{2}}
\end{equation*}
where $b_0$ and $c_1$ are defined in Lemma \ref{lem_interactions}  and
$c_4=\int_{\mathbb{R}^n}\frac{|x|^2}{(1+|x|^2)^n}dx$.
Moreover
\begin{equation*}
\begin{split}
\int \bar{\varphi}_{1}\varphi_{1}d\mu_{g_0}
= \; &
\int_{B_{\epsilon}(a_1)}
\frac{1}{1+\bar{\lambda}_{1}^{2}|x-\bar{a}_1|^{2}}
(\frac{\lambda_{1}}{1+\lambda_{1}^{2}|x-a|^{2}})^{\frac{n-2}{2}} dx\\
= \; &
\lambda_{1}^{-\frac{n+2}{2}}
\int_{B_{\epsilon \lambda_{1}}(0)}\frac{1}{1+\vert \frac{\bar{\lambda}_{1}}{\lambda_{1}}x+\bar{\lambda}_1(a_1-\bar{a}_1) \vert^{2} }
(\frac{1}{1+|x|^{2}})^{\frac{n-2}{2}} dx
\\
=\; &
\lambda_{1}^{-\frac{n+2}{2}}
\int_
{B_{\epsilon \lambda_{1}}(0)
\setminus
B_{\frac{\lambda_{1}}{\bar{\lambda}_{1}}}(0)}
\frac{1}{1+\vert \frac{\bar{\lambda}_{1}}{\lambda_{1}}x+\bar{\lambda}_1(a_1-\bar{a}_1) \vert^{2} }
(\frac{1}{1+r^{2}})^{\frac{n-2}{2}}dx\\
&
+
\lambda_{1}^{-\frac{n+2}{2}}
\int_
{B_{\frac{\lambda_{1}}{\bar{\lambda}_{1}}}(0)}
\frac{1}{1+\vert \frac{\bar{\lambda}_{1}}{\lambda_{1}}x+\bar{\lambda}_1(a_1-\bar{a}_1) \vert^{2} }
(\frac{1}{1+r^{2}})^{\frac{n-2}{2}}dx \\
=\; &
\Phi_{1} + \Phi_{2}
\end{split}
\end{equation*}
up to some $o_{\sfrac{1}{\lambda}_1+\sfrac{1}{\bar{\lambda}_1}}(\lambda_1^{\frac{2-n}{2}})$,
where
\begin{equation*}
\begin{split}
\Phi_{1}
\lesssim \; &
\lambda_{1}^{-\frac{n+2}{2}}
\vert \frac{\lambda_{1}}{\bar{\lambda}_{1}} \vert^{2}
\int^{ \epsilon \lambda_{1}}
_{\frac{\lambda_{1}}{\bar{\lambda}_{1}}}
r^{n-1-2-(n-2)}dr\\
&\;
=
\frac{1}{\bar{\lambda}_{1}^{2}\lambda_{1}^{\frac{n-2}{2}}}
\ln r|^{r= \epsilon \lambda_{1}}_{r=\frac{\lambda_{1}}{\bar{\lambda}_{1}}}
=
\frac{\ln(\epsilon \lambda_{1})-\ln(\frac{\lambda_{1}}{\bar{\lambda}_{1}})}{\bar{\lambda}_{1}^{2}\lambda_{1}^{\frac{n-2}{2}}}
=
o_{\sfrac{1}{\bar{\lambda}_{1}}}(\frac{1}{\lambda_{1}^{\frac{n-2}{2}}})
\end{split}
\end{equation*}
and
\begin{equation*}
\begin{split}
\Phi_{2}
\lesssim \; &
\lambda_{1}^{-\frac{n+2}{2}}
\int^{\frac{\lambda_{1}}{\bar{\lambda}_{1}}}_{0}
\frac{r^{n-1}}{1+r^{n-2}}dr\\
=\; &
\lambda_{1}^{-\frac{n+2}{2}}
\int^{\frac{\lambda_{1}}{\bar{\lambda}_{1}}}_{1}
\frac{r^{n-1}}{r^{n-2}}dr
+
O(\lambda_{1}^{-\frac{n+2}{2}})
=
o_{\sfrac{1}{\lambda_{1}}+\sfrac{1}{\bar{\lambda}_{1}}}(\frac{1}{\lambda_{1}^{\frac{n-2}{2}}}).
\end{split}
\end{equation*}
Finally
$$
\int \bar{\varphi}_id\mu_{g_0}=O^+(\bar{\lambda}_i^{-2})
$$
and collecting terms we conclude, that
\begin{equation}\label{I_1_estimate_for_k}
\begin{split}
I_1
= \; &
O^+(\bar{\lambda}_i^{-2})
+
c_1\alpha_1^{\frac{2n}{n-2}} \\
& -
c_1\alpha_1^{\frac{2n}{n-2}}\bar{\lambda}_1^2d^2(a_1, \bar{a}_1)
-
c_4\alpha_1^{\frac{2n}{n-2}}\frac{\bar{\lambda}_1^2}{\lambda_1^2}
+
\frac{2n}{n-2}b_0\frac{\alpha\alpha_1^{\frac{n+2}{n-2}}}{\lambda_1^{\frac{n-2}{2}}}
\end{split}	
\end{equation}
up to some error as in \eqref{error}.
Concerning the $v$-linear term in \eqref{expansion_of_k_up_to_o_v^2}, we have
\begin{equation*}
\begin{split}
  I_2 = \; & \sum_{i=1}^{2} \int \bar{\varphi}_i(\alpha^{\frac{n+2}{n-2}} + \alpha_1^{\frac{n+2}{n-2}}\varphi_{1}^{\frac{n+2}{n-2}} + \mathcal{R}_1)v d\mu_{g_0}\\
 = \; & \alpha^{\frac{n+2}{n-2}}\sum_{i=1}^{2}\int \bar{\varphi}_ivd\mu_{g_0} + \alpha_1^{\frac{n+2}{n-2}}\int \bar{\varphi}_1\varphi_{1}^{\frac{n+2}{n-2}}vd\mu_{g_0} + \int \bar{\varphi}_1\mathcal{R}_1vd\mu_{g_0},
\end{split}
\end{equation*}
cf. \eqref{def_R_1}. Note, that
\begin{eqnarray*}
\int \bar{\varphi}_i vd\mu_{g_0}
=
O(\frac{\|v\|}{\bar{\lambda}_i^{2}})=o_{\sfrac{1}{\bar{\lambda}_i}}(\bar{\lambda}_i^{-2}+\|v\|^2),
\end{eqnarray*}
while by direct calculation
\begin{equation}\label{estimates_derivatives_of_bar_phi}
\begin{split}
|\int ( & \bar{\varphi}_1  -\bar{\varphi}(a_1))\varphi_{1}^{\frac{n+2}{n-2}} v d\mu_{g_0}|
\leq
\| |\bar{\varphi}_1-\bar{\varphi}(a_1)|\varphi_{1}^{\frac{n+2}{n-2}} \|_{L^{\frac{2n}{n+2}}}\|v\|_{L^{\frac{2n}{n-2}}}\\
\lesssim\; & |\nabla\bar{\varphi}_1(a_1)|(\int_{B_{\epsilon}(0)}|x|^{\frac{2n}{n+2}}(\frac{\lambda_1}{1+\lambda_1^2|x|^2})^ndx)^{\frac{n+2}{2n}}\|v\|\\
&
+
\sup_{B_{\epsilon}(0)} |\nabla^2\bar{\varphi}_1|(\int_{B_{\epsilon}(0)}|x|^{\frac{4n}{n+2}}(\frac{\lambda_1}{1+\lambda_1^2|x|^2})^ndx)^{\frac{n+2}{2n}}\|v\|
+
O(\frac{\Vert v \Vert}{\lambda_{1}^{\frac{n+2}{2}}})
\\
= \;
&  O(\frac{\bar{\lambda}_1}{\lambda_1}\cdot\bar{\lambda}_1d(a_1, \bar{a}_1)\cdot\|v\|)+O(\frac{\bar{\lambda}^2_1}{\lambda^2_1}\cdot\|v\|)
+
O(\frac{\Vert v \Vert}{\lambda_{1}^{\frac{n+2}{2}}})
 \\
 = \; &
o_{\frac{\bar{\lambda}_1}{\lambda_1}+\bar{\lambda}_1d(a_1, \bar{a}_1)}
(\bar{\lambda}_1^2d^2(a_1, \bar{a}_1) + \lambda_{1}^{\frac{2-n}{2}}  + \frac{\bar{\lambda}_1^2}{\lambda_1^2}  + \|v\|^2)
 \end{split}
\end{equation}
and thus from \eqref{v_linear}
\begin{equation*}
\begin{split}
\int  \bar{\varphi}_1  \varphi_{1}^{\frac{n+2}{n-2}}vd\mu_{g_0}
= \; &
\bar{\varphi}_1(a_1)\int\varphi_{1}^{\frac{n+2}{n-2}}vd\mu_{g_0} + \int (\bar{\varphi}_1-\bar{\varphi}(a_1))\varphi_{1}^{\frac{n+2}{n-2}} v d\mu_{g_0}\\
= \; &
o_{\frac{\bar{\lambda}_1}{\lambda_1}+\bar{\lambda}_1d(a_1, \bar{a}_1)}
(\bar{\lambda}_1^2d^2(a_1, \bar{a}_1) + \lambda_1^{\frac{2-n}{2}} + \frac{\bar{\lambda}_1^2}{\lambda_1^2}  + \|v\|^2).
 \end{split}
\end{equation*}
Moreover, since $\bar{\varphi}_1$ is bounded, we may apply \eqref{R_1_first} and \eqref{R_1_second} to estimate
\begin{equation*}
  \int \bar{\varphi}_1\mathcal{R}_1vd\mu_{g_0}=o_{\frac{1}{\lambda_1}}(\lambda_1^{\frac{2-n}{2}}+\lambda_1^{-2}+\|v\|^2).
\end{equation*}
Collecting terms, we conclude
\begin{equation}\label{I_2_estimate_for_k}
I_2
=
o_{ \frac{\bar{\lambda}_1}{\lambda_1} + \bar{\lambda}_1d(a_1, \bar{a}_1)+\sum_{i}\sfrac{1}{\bar{\lambda}_i} }(\bar{\lambda}_i^{-2} + \bar{\lambda}_1^2d^2(a_1, \bar{a}_1)
+ \lambda_1^{\frac{2-n}{2}} + \frac{\bar{\lambda}_1^2}{\lambda_1^2}  + \|v\|^2).
\end{equation}
Finally up to some $o_{\frac{\bar{\lambda}_1}{\lambda_1}+\bar{\lambda}_1d(a_1, \bar{a}_1)+\sum_{i}\frac{1}{\bar{\lambda}_i}}(\|v\|^2)$
there holds
\begin{equation}\label{I_3_estimate_for_k}
\begin{split}
 I_3 =\; & \sum_{i=1}^2\int \bar{\varphi}_i(\alpha^{\frac{4}{n-2}} + \alpha_1^{\frac{4}{n-2}}\varphi_{1}^{\frac{4}{n-2}}) v^2 d\mu_{g_0}\\
 =\; & \alpha_1^{\frac{4}{n-2}}\bar{\varphi}_1(a_1)\int \varphi_{1}^{\frac{4}{n-2}}v^2d\mu_{g_0}
 = \alpha_1^{\frac{4}{n-2}}\int \varphi_{1}^{\frac{4}{n-2}}v^2d\mu_{g_0}.
 \end{split}
\end{equation}
Recalling \eqref{expansion_of_k_up_to_o_v^2}, from \eqref{I_1_estimate_for_k}, \eqref{I_2_estimate_for_k} and \eqref{I_3_estimate_for_k} we conclude
\begin{equation*}
\begin{split}
k
= \; &
-
\bar{\alpha}
+
O^+(\bar{\lambda}_i^{-2})
+
c_1\alpha_1^{\frac{2n}{n-2}}
-
c_1\alpha_1^{\frac{2n}{n-2}}\bar{\lambda}_1^2d^2(\bar{a}_1, a_1)
-
c_4\alpha_1^{\frac{2n}{n-2}} \frac{\bar{\lambda}_1^2}{\lambda_1^2}
\\
& +
\frac{2n}{n-2}b_0\frac{ \alpha\alpha_1^{\frac{n+2}{n-2}}}{\lambda_1^{\frac{n-2}{2}}}
+
\frac{n(n+2)}{(n-2)^2}\alpha_1^{\frac{4}{n-2}}\int v^2\varphi_{1}^{\frac{4}{n-2}}d\mu_{g_0}\\
& +
o_{ \frac{\bar{\lambda}_1}{\lambda_1} + \bar{\lambda}_1d(a_1, \bar{a}_1)+\|v\|+\sum_{i}\sfrac{1}{\bar{\lambda}_i}}
(\bar{\lambda}_1^2d^2(a_1, \bar{a}_1)+ \lambda_1^{\frac{2-n}{2}} + \frac{\bar{\lambda}_1^2}{\lambda_1^2}  +\|v\|^2).
\qedhere
\end{split}
\end{equation*}
\end{proof}

\extrafootertext{
{
The authors have no conflict of interest to declare.
Data sharing is not applicable to this article as no datasets were generated or analysed during the current study.}
}

\end{document}